\renewcommand{\leq}{\leqslant}
\renewcommand{\geq}{\geqslant}
\renewcommand{\le}{\leqslant}
\renewcommand{\ge}{\geqslant}
\definecolor{mno}{rgb}{0.5,0.1,0.5}
\newcommand{\R}{\mathbb R}
\newcommand{\bP}{\mathbb P}
\newcommand{\bE}{\mathbb E}
\def\bB {{\mathbb B}}
\newcommand{\sL}{\mathcal{L}}
\newcommand{\sE}{\mathcal{E}}
\newcommand{\sF}{\mathcal{F}}
\newcommand{\sD}{\mathcal{D}}
\newcommand{\sN}{\mathcal{N}}
\newcommand{\eps}{\varepsilon}
\newtheorem{theorem}{Theorem}[section]
\newtheorem{lemma}[theorem]{Lemma}
\newtheorem{proposition}[theorem]{Proposition}
\newtheorem{corollary}[theorem]{Corollary}
\numberwithin{equation}{section}
\theoremstyle{definition}
\newtheorem{definition}[theorem]{Definition}
\newtheorem{remark}[theorem]{Remark}
\begin{document}
\allowdisplaybreaks
\title[Heat kernel estimates for  time fractional equations]{\bfseries
Heat kernel estimates for  time fractional equations}

\author{Zhen-Qing Chen \quad Panki Kim \quad Takashi Kumagai
\quad Jian Wang}

\date{}
\maketitle

\begin{abstract}
In this paper, we establish existence and uniqueness of weak solutions to
general  time fractional equations and give their probabilistic
representations.
We then derive sharp
two-sided
estimates for fundamental solutions of
a family of
time fractional equations in metric measure spaces.

 \bigskip

\noindent
\textbf{Keywords:} Dirichlet form; subordinator; Caputo derivative;
heat kernel estimates;
time fractional  equation
\bigskip

\noindent \textbf{MSC 2010:}
 60G52; 60J25; 60J55;  60J35; 60J75.
\end{abstract}
\allowdisplaybreaks

\medskip

\section{Introduction}

\subsection{Motivation}

 Let $X=\{X_t, t\geq 0; \, \bP_x, x\in M\}$ be a strong Markov process on a separable locally compact Hausdorff space $M$
  whose transition semigroup $\{T_t, t\geq 0\}$ is a uniformly bounded strong continuous semigroup in some Banach space $(\bB, \| \cdot \|)$.
For example,     $\bB=L^p( M; \mu)$ for some measure $\mu$ with full support on $M$ and $p\geq 1$ or $\bB=
   C_\infty (M)$, the space of continuous functions  o   $M$ that vanish at infinity equipped with uniform norm.
  Let  $(\sL, \sD (\sL))$ be the infinitesimal generator of $\{T_t, t\geq 0\}$ in $\bB$.

Let $S=\{S_t: t\ge0\}$ be a subordinator (that is, a non-decreasing real valued L\'evy process
 with $S_0=0$)
without drift and having the Laplace exponent $\phi$:
$$
\bE  e^{-\lambda S_t}= e^{-t \phi(\lambda)} \quad \hbox{for all } t, \lambda>0 .
$$
It is well known
(see, e.g., \cite{SSV})
 that there exists a unique Borel measure $\nu$ on $(0,\infty)$ with $\int_0^\infty (1\wedge s)\,\nu(ds)<\infty$
such that
\begin{equation}\label{e:1.1}
\phi(\lambda)=\int_0^\infty (1-e^{-\lambda s})\, \nu(ds).
\end{equation}
The measure $\nu$ is called the L\'evy measure of the subordinator $S$.
Define for $t>0$, $E_t=\inf\{s>0: S_s>t\}$, the inverse subordinator.
We assume that $S$ is independent of $X$ and that  $\nu(0,\infty) =\infty$, excluding
compound Poisson processes.
Thus, almost surely, the function $t\mapsto S_t$ is strictly increasing, and hence $t\mapsto E_t$ is continuous.

 Recently, it is established as a particular case of
 \cite[Theorem 2.1]{Chen} that
for any $f\in \sD(\sL)$,
$$
u (t,x):= \bE \left[ T_{E_t}f(x) \right]=\bE^x \left[ f(X_{E_t}) \right]
$$
is the unique strong solution (in some suitable sense) to
the equation
\begin{equation}\label{e:1.3}
\partial_t^w u(t,x)=\sL u (t,x) \quad \hbox{with } u(0, x)=f(x),
\end{equation}
    where $w(s)=\nu(s,\infty)$ for $s>0$, and
$\partial_t^w$ is the
fractional
derivative defined as follows: for a function $\psi: [0, \infty)\to \R$,
\begin{equation}\label{e:1.4}
\partial_t^w  \psi (t) :=\frac{d}{dt}\int_0^t w(t-s) (\psi (s)-\psi(0))\,ds.
\end{equation}
 See Theorem \ref{T:2.1} in Section \ref{S:2} for a precise statement.
  If the semigroup $\{T_t:t\ge0\}$
    (or equivalently, the strong Markov process $X$) has a
       density kernel $q(t, x, y)$
  with respect to  a measure $\mu$ on $M$,
then by  Fubini's theorem,  for any bounded function $f\in  \sD(\sL)$,
\begin{align} \label{e:sou}
u(t,x) &= \bE \left[ T_{E_t}f(x) \right]=\int_0^\infty T_r f(x)\,d_r\bP (E_t\le r)=\int_0^\infty T_r f(x)\, d_r\bP (S_r\ge t) \\
&= \int_0^\infty \int_M f(y)q(r,x,y)\,\mu(dy)\, d_r\bP (S_r\ge t) \nonumber \\
&=\int_M f(y) \left( \int_0^\infty q(r,x,y)\, d_r\bP (S_r\ge t)\right) \mu(dy). \nonumber
\end{align}
Here and in what follows, $d_r$ denotes the (generalized) derivative with respect to $r$.
This says that
\begin{equation} \label{e:1.5}
 p(t,x,y):=\int_0^\infty q(r,x,y)\, d_r\bP (S_r\ge t)
 \end{equation}
is the ``fundamental solution" to the time fractional equation \eqref{e:1.3}.
Note that in the PDE literatures, the most standard approach to analyze $p(t,x,y)$ is
to use the Mittag-Leffler function, and then take the inverse Fourier
transform (see for instance \cite{EK}, about detailed estimates of $p(t,x,y)$ when $\{S_t\}$ is a
$\beta$-stable subordinator).
We emphasize
that the expression \eqref{e:1.5} is more intuitive, simple, and general
(in the sense that we do not rely on the Fourier transform).

When $S=\{S_t: t\geq 0\}$ is a
$\beta$-stable subordinator
with the Laplace exponent $\phi (\lambda) = \lambda^\beta$ for some $0<\beta<1$, $S$ has no drift
and its L\'evy measure is given by
$\nu (ds)= \frac{\beta}{\Gamma (1-\beta)} s^{-(1+\beta)} \,ds$.
In this case
$$
w(s)= \nu (s, \infty)= \int_s^\infty \frac{\beta}{\Gamma (1-\beta)} y^{-(1+\beta)} \,dy = \frac{s^{-\beta}}{\Gamma (1-\beta)},
$$
and so the time fractional derivative $\partial^w_t f $ defined by \eqref{e:1.4} is just the Caputo derivative of order $\beta$
in literature.

The time fractional diffusion equation
\eqref{e:1.3} with $\sL =\Delta$
has been widely used
to model anomalous diffusions  exhibiting  subdiffusive behavior,
due to particle sticking and trapping phenomena (see e.g. \cite{MSi,SKW82}).
It can be used to model ``ultraslow diffusion"  where a plume spreads at a logarithmic rate,
for example when $S$ is a subordinator of
mixed stable subordinators;
see \cite{MS1} for details.
The time fractional diffusion equation also appears as
 a scaling limit of random walk on $\mathbb Z^d$
with
heavy-tailed random conductance:
Let $\{C_{xy}: x,y\in \mathbb Z^d, |x-y|=1\}$ be positive
i.i.d. random variables such that
$C_{xy}=C_{yx}$,  ${\mathbb P}(C_{xy}\ge 1)=1$ and
\[
{\mathbb P}(C_{xy}\ge u)=c_1u^{-\alpha}(1+o(1))\quad \mbox{ as }u\to\infty
\]
for some constants $c_1>0$ and $\alpha\in (0,1)$. Let $\{Y_t\}_{t\ge 0}$ be the Markov chain whose transition probability from
$x$ to $y$ is equal to $C_{xy}/\sum_{z\in \mathbb Z^d} C_{xz}$. Then, for $d\ge 3$, $\{\eps Y_{t\eps^{-2/\alpha}}\}_{t\ge0}$ converges to a multiple of
the Caputo time fractional
 diffusion process on the path space equipped with the Skorokhod $J_1$-topology ${\mathbb P}$-almost
surely as $\eps\to 0$; see \cite{BarC}. For $d=2$, the same result holds by changing the scaling as $\{\eps Y_{t(\log (1/\eps))^{1-1/\alpha}\eps^{-2/\alpha}}\}_{t\ge0}$; see
\cite{Cer}.

Time fractional diffusion equations have possible applications to anomalous diffusions in soil; see for instance \cite{NSY}. The ultimate goal in the application is to determine the microstructure of soil
through the averaged spatial data analysis, and to predict the progress of soil contamination. For such analysis, there
is no reason that the operator in the
master equation \eqref{e:1.3}
is the classical Laplace operator in Euclidean space, and it would be useful to consider more general operators in metric measure spaces (\cite{Nak}).
 In fact there are literatures that discuss the time fractional equation \eqref{e:1.3} in which
$\sL$ is a fractional Laplacian; see \cite{BHG,SZ,Zas}. In \cite{BHG} the authors discuss applications to laws of human travels, and in \cite{SZ,Zas} applications to chaotic Hamiltonian dynamics are discussed in typical low dimensional systems.
Therefore, it is interesting and desirable to obtain explicit two-sided estimates of $p(t, x, y)$ for more general operators in non-Euclidean spaces.

The goal of this paper is to accomplish
 this, assuming general apriori estimates (see \eqref{e:hkmequi} and \eqref{eq:fibie3} below)
  for the fundamental solution of the heat equation of
the infinitesimal spatial generator $\sL$, and some weak scaling property on the subordinator $S$ (see \eqref{e:phi}).
Moreover, when $X$ is a symmetric Markov process with respect to some measure $\mu$ on $M$,
we will show in Section \ref{S:2} that for every $f\in L^2(M; \mu)$,
 $u (t,x):= \bE \left[ T_{E_t}f(x) \right]$ is the unique weak solution to \eqref{e:1.3};
 see Theorem \ref{T:2.4} for details.

\medskip

In what follows, we write
$h(s)\simeq f(s)$
if there exist constants $c_{1},c_{2}>0$ such that
$
c_{1}f(s)\leq h(s)\leq c_{2}f(s),
$
for the specified range of the argument $s$. Similarly, we write
$h(s)\asymp f(s)g(s)$
if there exist constants $C_{1},c_{1},C_{2},c_{2}>0$ such that
$
f(C_{1}s)g(c_{1}s)\leq h(s)\leq f(C_{2}s)g(c_{2}s)
$
for the specified range of $s$.
 $c$ (without subscripts) denotes a strictly positive
constant  whose value
is unimportant and which  may change from line to line.
Constants $c_0, c_1, c_2, \ldots$ with subscripts denote strictly positive
constants and the labeling of the constants $c_0, c_1, c_2, \ldots$ starts anew in
the statement of each result and the each step of its proof.
We will use ``$:=$" to denote a definition,
which is read as ``is defined to be".
 For any $a, b\in \R$, we use the notations
$a\wedge b:=\min \{a, b\}$ and $a\vee b:=\max\{a, b\}$.
Sometimes we use the notation $\frac{\partial v(t, r)}{\partial t}=\partial_t v(t, r)$.

\subsection{Special case: self-adjoint spatial generators on $d$-sets
}

Before giving our main results in full generality, we first give a version of them which
can be described in a tidy way.

Throughout  this paper except Section \ref{S:2},
  $(M, d)$ is  a locally compact separable metric space and $\mu$ is
 a Radon measure on $( M,d )$ that has full support.
We say that the metric space $(M,d)$ satisfies the \emph{chain condition}
if there exists a constant $C>0$ such that, for any $x,y\in
M$ and for any $n\in {\mathbb N}$, there exists a sequence
$\{x_{i}\}_{i=0}^{n}\subset M$ such that $x_{0}=x$, $x_{n}=y$, and
\begin{equation*}
d(x_{i},x_{i+1})\leq C \frac{ d(x,y)}{n}\qquad \text{for all }i=0,1,\cdots,n-1.
\end{equation*}

Suppose that $X$ is an $\mu$-symmetric Hunt process associated with a
regular Dirichlet form
$(\mathcal{E}, \mathcal F)$ on $L^2(M; \mu)$
and it has
a transition density function $q(t, x, y)$ with respect to the measure
$\mu$. We call $q(t, x, y)$ the heat kernel of
$X$.
Suppose that the heat kernel enjoys the
following estimates
\begin{equation}\label{e:hk}
q(t,x,y)\asymp \frac{1}{t^{d/\alpha}}F\left(\frac{ \,d(x,y)}{t^{1/\alpha}}\right),\quad t>0, x,y\in M,
\end{equation}
where $d,\alpha>0$ and $F :[0,+\infty )\rightarrow \lbrack 0,+\infty )$ is
 a non-increasing
 function  such that $F (s_0) >0$ for some $s_0>0$.
In \cite[Theorem 4.1]{GK}, it is proved that if
$(M,d)$ satisfies the chain condition and
$(\mathcal{E}, \mathcal F)$ is conservative,
then there are only two possible shapes of $F$.

\begin{theorem}{\rm (\cite[Theorem 4.1]{GK})} \label{dichoTh}
Assume that the metric space $( M,d ) $ satisfies
the chain condition and all balls are relatively compact.
Assume further that $(\mathcal{E}, \mathcal F)$ is regular, conservative and
\eqref{e:hk} holds with some $d,\alpha>0$ and
non-increasing
 function $F$.
Then $\alpha \leq d +1$,
$\mu \left( B\left( x,r\right) \right) \simeq r^{d }$ for all $x\in M$ and $r>0$,
and the following dichotomy holds:
either the Dirichlet form
 $(\sE, \sF)$ is local, $\alpha \geq 2$,
$M$ is connected, and
\begin{equation*}
F \left( s\right) \asymp \exp \left( -s^{\alpha / (\alpha -1) }\right),
\end{equation*}
or the Dirichlet form
 $(\sE, \sF)$ is
of pure jump type and
\begin{equation*}
F \left( s\right)\simeq \left( 1+s\right) ^{-\left( d+\alpha
\right) }.
\end{equation*}
\end{theorem}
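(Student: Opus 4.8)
\medskip
\noindent\textbf{Sketch of proof} (following \cite{GK}).\ \emph{Volume regularity.} Conservativeness gives $\int_M q(t,x,y)\,\mu(dy)=1$ for all $t>0$ and $x\in M$; restricting the integral to $B(x,t^{1/\alpha})$ and inserting the lower bound in \eqref{e:hk} yields $1\ge c\,t^{-d/\alpha}\mu(B(x,t^{1/\alpha}))$, so $\mu(B(x,r))\le Cr^d$ for all $x$ and $r>0$. The matching bound $\mu(B(x,r))\ge cr^d$ follows from \eqref{e:hk} and conservativeness by now-standard arguments: since in each of the two cases below $F$ will be seen to satisfy $\int_1^\infty F(s)\,s^{d-1}\,ds<\infty$, a dyadic annular decomposition of $\int_{\{d(x,\cdot)\ge R\}}q(t,x,\cdot)\,d\mu$ combined with the upper volume bound gives a uniform escape estimate $\bP_x(X_t\notin B(x,Kt^{1/\alpha}))\le\tfrac12$ for some universal $K$, whence Cauchy--Schwarz and the on-diagonal bound yield $\tfrac14\le\bP_x(X_t\in B(x,Kt^{1/\alpha}))^2\le\mu(B(x,Kt^{1/\alpha}))\,q(2t,x,x)\le C\,\mu(B(x,Kt^{1/\alpha}))\,t^{-d/\alpha}$.

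\emph{The dichotomy.} Write the Beurling--Deny decomposition $\sE=\sE^{(c)}+\sE^{(j)}+\sE^{(k)}$, with $\sE^{(c)}$ strongly local, $\sE^{(j)}$ of pure-jump type with jumping measure $J$, and $\sE^{(k)}$ a killing term; conservativeness forces $\sE^{(k)}=0$. The crucial input is the L\'evy-system identity: for fixed $x\ne y$ and small $\rho>0$, $t^{-1}\bP_x(X_t\in B(y,\rho))\to\int_{B(y,\rho)}j(x,z)\,\mu(dz)$ as $t\downarrow0$, where $J(dx,dz)=j(x,z)\,\mu(dx)\,\mu(dz)$ (absolute continuity following from \eqref{e:hk}). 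By \eqref{e:hk} the left side is $\asymp t^{-1-d/\alpha}F(d(x,y)/t^{1/\alpha})\,\mu(B(y,\rho))$; hence if $\sE^{(j)}\ne0$, some such limit is strictly positive, which forces $F(d(x,y)/t^{1/\alpha})\asymp t^{1+d/\alpha}$ as $t\downarrow0$, i.e. $F(s)\simeq s^{-(d+\alpha)}$ for large $s$. Since $F$ is non-increasing with $F(0)<\infty$, this upgrades to $F(s)\simeq(1+s)^{-(d+\alpha)}$ on $[0,\infty)$, and also $j(x,y)\asymp d(x,y)^{-(d+\alpha)}$; a separate compatibility argument then excludes a nonzero $\sE^{(c)}$, so the form is pure jump. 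If instead $\sE^{(j)}=0$, the form is strongly local and $X$ has continuous sample paths.

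\emph{The local case.} Splitting $q(t,x,y)=\int_M q(t/2,x,z)q(t/2,z,y)\,\mu(dz)$ over the cover $M=\{d(z,x)\ge R/2\}\cup\{d(z,y)\ge R/2\}$ (legitimate since $R:=d(x,y)$) and using $q(t/2,\cdot,\cdot)\le Ct^{-d/\alpha}$ gives $q(t,x,y)\le Ct^{-d/\alpha}\bigl(\bP_x(X_{t/2}\notin B(x,R/2))+\bP_y(X_{t/2}\notin B(y,R/2))\bigr)$. Since $X$ has continuous paths, to leave $B(z,\rho)$ it must successively exit $\sim n$ balls of radius $\rho/n$; applying the strong Markov property at the successive exit times, together with the rescaled escape estimate $\bP_w(X_u\in B(w,\rho/n))\ge\tfrac12$ for $u\le c(\rho/n)^\alpha$, and optimizing over $n$, one gets $\bP_z(X_u\notin B(z,\rho))\le C\exp(-c(\rho^\alpha/u)^{1/(\alpha-1)})$ (in particular $\alpha>1$), hence $F(s)\le C\exp(-c\,s^{\alpha/(\alpha-1)})$. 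For the matching lower bound I would chain: given $x,y$ with $R=d(x,y)$ and $s=R/t^{1/\alpha}$, choose $n\asymp s^{\alpha/(\alpha-1)}$ and, using the chain condition, points $x_0=x,\dots,x_n=y$ with $d(x_i,x_{i+1})\le CR/n$ (so that $R/n\asymp(t/n)^{1/\alpha}$); iterating Chapman--Kolmogorov over the balls $B(x_i,\eps(t/n)^{1/\alpha})$ and using the near-diagonal lower bound in \eqref{e:hk} and $\mu(B(x_i,\eps(t/n)^{1/\alpha}))\simeq(t/n)^{d/\alpha}$ yields $q(t,x,y)\ge c_0^{\,n}(t/n)^{-d/\alpha}$ for some $c_0\in(0,1)$, i.e. $F(s)\ge c\exp(-c\,s^{\alpha/(\alpha-1)})$.

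\emph{Connectedness, $\alpha\ge2$, $\alpha\le d+1$, and the main difficulty.} The lower bound just obtained shows $q(t,x,y)>0$ everywhere, so $F>0$ everywhere; as a strongly local form decomposes over the connected components of $M$, this forces $M$ to be connected, and $\alpha\ge2$ is classical for strongly local forms enjoying volume regularity and \eqref{e:hk}. For $\alpha\le d+1$: if $\alpha\le d$ there is nothing to prove, and if $\alpha>d$ then \eqref{e:hk} and $\mu(B(x,r))\simeq r^d$ give the effective-resistance estimate $R_{\mathrm{eff}}(x,y)\asymp d(x,y)^{\alpha-d}$, whence the series law along a chain $x_0=x,\dots,x_n=y$ with $d(x_i,x_{i+1})\le Cd(x,y)/n$ gives $d(x,y)^{\alpha-d}\lesssim n\,(d(x,y)/n)^{\alpha-d}=n^{\,1-(\alpha-d)}d(x,y)^{\alpha-d}$ for every $n$, forcing $\alpha-d\le1$. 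The step I expect to be most delicate is the sharp upper bound on $F$ in the strongly local case when $\alpha>2$: one must genuinely exploit the absence of jumps, and the classical route via Davies' integrated maximum principle can break down because the intrinsic metric of $(\sE,\sF)$ need not be comparable to $d$ (and may be degenerate, as on many fractals); the self-improving iteration of the exit-time estimate across many concentric annuli — which is precisely where the chain condition is used — is what replaces it. Cleanly excluding a mixed local--jump form, and the bookkeeping behind $\alpha\ge2$ and $\alpha\le d+1$, are the other points requiring care.
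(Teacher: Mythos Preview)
The paper does not give its own proof of this statement: Theorem~\ref{dichoTh} is quoted verbatim from \cite[Theorem~4.1]{GK} and is used only as input for the subsequent heat kernel analysis. There is therefore nothing in the present paper to compare your sketch against; what you have written is a reconstruction of the argument of \cite{GK} itself rather than of anything in this paper.

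That said, your outline is broadly faithful to the Grigor'yan--Kumagai strategy (Beurling--Deny decomposition, short-time asymptotics to detect the jump kernel, and the chaining/iteration argument in the strongly local case). Two places deserve tightening. First, your proof of the lower volume bound $\mu(B(x,r))\gtrsim r^d$ is circular as written: you appeal to the integrability $\int_1^\infty F(s)s^{d-1}\,ds<\infty$ ``in each of the two cases below'', but the dichotomy has not yet been established at that stage. In \cite{GK} the volume regularity is obtained directly from \eqref{e:hk} and conservativeness without first identifying $F$. Second, the step where you exclude a nontrivial local part $\sE^{(c)}$ once the polynomial tail $F(s)\simeq s^{-(d+\alpha)}$ has been forced is not just bookkeeping: one has to show that a nonzero strongly local part would contradict the two-sided bound \eqref{e:hk} (e.g.\ by producing incompatible short-time on-diagonal behaviour or incompatible exit-time scaling), and you should sketch why. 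The remaining points you flag as delicate ($\alpha\ge 2$, $\alpha\le d+1$, and the sub-Gaussian upper bound when the intrinsic metric degenerates) are indeed the technical heart of \cite{GK}, and your description of the resistance-chain argument for $\alpha\le d+1$ and the self-improving exit-time iteration is accurate in spirit.
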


\medskip

In other words,  Theorem \ref{dichoTh} assets that under assumptions in the theorem,
$(M, d , \mu)$ is an Alfhors $d$-regular set and the heat kernel $q(t, x, y)$ has the following estimates:
\begin{equation}\label{e:1.7}
q(t, x, y) \asymp t^{-d/\alpha} \exp \left( - \left(\frac{  d(x, y)^\alpha }{ t} \right)^{1/(\alpha -1)}  \right),
\quad t>0, x, y\in M
\end{equation}
 for some $\alpha \geq 2$ when $(\sE, \sF)$ is local
and $M$ is connected, or
 \begin{equation}\label{e:1.8}
q(t, x, y) \simeq t^{-d/\alpha} \left( 1+ \frac{ \,d(x,y)}{t^{1/\alpha}}  \right)^{- ( d+\alpha) }
\simeq t^{-d/\alpha} \wedge \frac t{d(x,y)^{d+\alpha}},\quad t>0, x, y\in M
\end{equation}
for some $\alpha >0$ when $ (\sE, \sF)$ is
of pure jump type.
Property \eqref{e:1.7}  is called the
sub-Gaussian heat kernel estimates, and \eqref{e:1.8} is called the
$\alpha$-stable-like heat kernel estimates.
\ \

\begin{definition}\label{theorem:defwsc}
Suppose that $0<\alpha_1 \le \alpha_2 < \infty$.
We say that a non-decreasing function $\Psi: (0,\infty)\to (0,\infty)$ satisfies the \emph{weak scaling property with $(\alpha_1,\alpha_2)$}
if there exist constants $c_1,c_2>0$
such that \begin{equation}\label{e:lleeqpa}
c_1(R/r)^{\alpha_1}
\le\Psi(R)/\Psi(r) \le c_2(R/r)^{\alpha_2}
\quad \text{for all } 0<r \le R < \infty.
\end{equation}
We say that a family of non-decreasing functions $\{\Psi_x\}_{x\in \Lambda}$  satisfies the \emph{weak scaling property uniformly with $(\alpha_1,\alpha_2)$} if each $\Psi_x$ satisfies the weak scaling property with constants
$c_1,c_2>0$ and $0<\alpha_1 \le \alpha_2 < \infty$ independent of the choice of $x\in \Lambda$.
\end{definition}

\medskip

Throughout the paper, we assume that the Laplace exponent $\phi$ of the driftless subordinator $S=\{S_t: t\geq 0\}$ satisfies the weak scaling property with $(\beta_1,\beta_2)$ such that $0<\beta_1\le \beta_2<1$; namely,
for any $\lambda>0$ and $\kappa\ge 1$,
\begin{equation}\label{e:phi} c_1 \kappa^{\beta_1}\le \frac{\phi(\kappa \lambda)}{\phi(\lambda)}\le c_2 \kappa^{\beta_2}.\end{equation}
Note that under \eqref{e:phi}, the L\'evy measure $\nu$ of $S$ is infinite as
  $\nu(0,\infty)=\lim\limits_{\lambda \to \infty}\phi(\lambda)=\infty$,
 excluding
 compound Poisson processes.

\ \

The following is the main result in this subsection on the two-sided sharp estimates  for the fundamental solution $p(t,x,y)$ of the time fractional equation \eqref{e:1.3}.

\begin{theorem}\label{T:1.2}
 Assume conditions in Theorem $\ref{dichoTh}$ and \eqref{e:phi} hold. Let $p(t,x,y)$ be given by \eqref{e:1.5}. Then, we have
\begin{itemize}
\item[(i)] If $d(x,y) \phi(t^{-1})^{1/\alpha}\le 1$, then
\begin{align*}
p(t,x,y) \simeq
&\begin{cases}
 \phi(t^{-1})^{d/\alpha}  &  \hbox{if }  d<\alpha, \\
 \phi(t^{-1})
\displaystyle \log \left(\frac{2}{d(x,y)\phi(t^{-1})^{1/\alpha}}\right)  &   \hbox{if } d=\alpha,\\
\phi(t^{-1})^{d/\alpha} \left(d(x,y) \phi(t^{-1})^{1/\alpha}\right)^{-d+\alpha}
=\phi(t^{-1})/ d(x,y)^{d-\alpha}
  & \hbox{if }   d>\alpha.
\end{cases}
\end{align*}

\item[(ii)] Suppose  $d(x,y) \phi(t^{-1})^{1/\alpha}\ge 1$.
When the Dirichlet form $(\sE, \sF)$ is local,
\begin{equation}\label{e:08-0} p(t,x,y)\asymp \phi(t^{-1})^{d/\alpha} \exp\Big(- t{\bar\phi_\alpha}^{-1}((d(x,y)/t)^\alpha)\Big),
\end{equation}
where $\displaystyle \bar\phi_\alpha(\lambda)= \lambda^\alpha / {\phi(\lambda)}$,
and $ \bar\phi_\alpha^{-1} (\lambda)$ is the inverse function of  $\bar\phi_\alpha(\lambda)$,
i.e.,\ $\bar\phi_\alpha^{-1} (\lambda):=\inf\{s>0: \bar\phi_\alpha(s)\ge \lambda\}$ for all $\lambda\ge0$;
when  $(\sE, \sF)$ is
of pure jump type,
$$p(t,x,y)\simeq
  \phi(t^{-1})^{d/\alpha} (d(x,y)\phi(t^{-1})^{1/\alpha})^{-d-\alpha}
=\frac1{ \phi(t^{-1}) d(x, y)^{d+\alpha} }.
$$
\end{itemize}
\end{theorem}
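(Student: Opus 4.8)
The plan is to argue directly from the representation \eqref{e:1.5}. Since $\nu(0,\infty)=\infty$ and $S$ is driftless, $S_r$ has a density for every $r>0$, so $r\mapsto\bP(S_r\ge t)=\bP(E_t\le r)$ is absolutely continuous; writing $\nnu_t(r)$ for the density of the inverse subordinator $E_t$, we have $p(t,x,y)=\int_0^\infty q(r,x,y)\,\nnu_t(r)\,dr$. The first and technically central step is to establish sharp two-sided bounds on $\nnu_t(r)$ depending only on $\phi$ and the weak scaling \eqref{e:phi}. Set $\Phi(t):=1/\phi(t^{-1})$; under \eqref{e:phi} this is the natural time scale of $E_t$, it satisfies \eqref{e:lleeqpa} with indices $(\beta_1,\beta_2)$, and $\nu(t,\infty)\simeq\phi(t^{-1})=1/\Phi(t)$. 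The bounds to be proved are: $\nnu_t(r)\simeq\phi(t^{-1})$ for $0<r\le c\,\Phi(t)$, and $\nnu_t(r)\asymp(\text{algebraic factor})\cdot\exp(-c\,\gamma_t(r))$ for $r\ge c\,\Phi(t)$, where $\gamma_t$ is increasing and built from $\phi$ (for a $\beta$-stable subordinator, $\gamma_t(r)=(r/\Phi(t))^{1/(1-\beta)}$). These follow from known sharp two-sided estimates for the transition density $\sigma_r$ of $S_r$ under weak scaling together with the identity $\nnu_t(r)=\partial_r\int_t^\infty\sigma_r(u)\,du$.

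Granting this, parts (i) and (ii) follow by inserting the heat kernel bounds \eqref{e:1.7} (local case) or \eqref{e:1.8} (pure-jump case) and estimating the resulting integral regime by regime in $r$. Note that $d(x,y)\phi(t^{-1})^{1/\alpha}\le 1$ is equivalent to $d(x,y)^\alpha\le\Phi(t)$, i.e.\ the bulk of the mass of $E_t$ (which sits at $r\simeq\Phi(t)$) lies in the near-diagonal range of $q(r,\cdot,\cdot)$, which is why part (i) does not feel the local/jump dichotomy. In part (i), the range $r\simeq\Phi(t)$ contributes $\simeq\Phi(t)^{-d/\alpha}=\phi(t^{-1})^{d/\alpha}$; the range $d(x,y)^\alpha\lesssim r\lesssim\Phi(t)$, where $q(r,x,y)\simeq r^{-d/\alpha}$ and $\nnu_t(r)\simeq\phi(t^{-1})$, contributes $\simeq\phi(t^{-1})\int_{d(x,y)^\alpha}^{\Phi(t)}r^{-d/\alpha}\,dr$, which is (up to constants) $\phi(t^{-1})^{d/\alpha}$ if $d<\alpha$, the logarithmic term if $d=\alpha$, and $\phi(t^{-1})/d(x,y)^{d-\alpha}$ if $d>\alpha$; the ranges $r\lesssim d(x,y)^\alpha$ (using $q(r,x,y)\simeq r/d(x,y)^{d+\alpha}$ in the jump case and the sub-Gaussian bound in the local case) and $r\gtrsim\Phi(t)$ contribute at the same or lower order and are absorbed. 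Comparing the three contributions gives the three cases of (i).

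For part (ii) we have $d(x,y)^\alpha\ge\Phi(t)$. In the pure-jump case one uses $q(r,x,y)\simeq r^{-d/\alpha}\wedge(r/d(x,y)^{d+\alpha})$ throughout: the dominant contribution comes from $r\lesssim\Phi(t)$, where $q(r,x,y)\simeq r/d(x,y)^{d+\alpha}$ and $\nnu_t(r)\simeq\phi(t^{-1})$, giving $\simeq\phi(t^{-1})\,d(x,y)^{-(d+\alpha)}\int_0^{\Phi(t)}r\,dr\simeq\Phi(t)\,d(x,y)^{-(d+\alpha)}=[\phi(t^{-1})\,d(x,y)^{d+\alpha}]^{-1}$, while the tail $r\gtrsim\Phi(t)$ is controlled by $d(x,y)^{-(d+\alpha)}\,\bE[E_t]\simeq\Phi(t)\,d(x,y)^{-(d+\alpha)}$, the same order. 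In the local case one must evaluate $\int_0^\infty r^{-d/\alpha}\exp\!\big(-(d(x,y)^\alpha/r)^{1/(\alpha-1)}\big)\nnu_t(r)\,dr$ by a Laplace-type argument: up to algebraic factors the integrand is $\exp\!\big(-(d(x,y)^\alpha/r)^{1/(\alpha-1)}-c\,\gamma_t(r)\big)$, with the first exponent decreasing and the second increasing in $r$, so the lower bound follows by restricting to a window around the minimizer $r^\ast$ and the upper bound by splitting at $r^\ast$ and using that on either side one of the two exponents already has the order of the minimum. It remains to identify $\min_{r>0}\big[(d(x,y)^\alpha/r)^{1/(\alpha-1)}+\gamma_t(r)\big]$ with $t\,\bar\phi_\alpha^{-1}((d(x,y)/t)^\alpha)$ up to constants; using only \eqref{e:phi} — hence that $\bar\phi_\alpha(\lambda)=\lambda^\alpha/\phi(\lambda)$ is increasing with scaling indices $(\alpha-\beta_2,\alpha-\beta_1)$ and so has a well-defined generalized inverse — one checks this identity (it is elementary for a $\beta$-stable $S$, where $\gamma_t(r)=(r/t^\beta)^{1/(1-\beta)}$, the minimum equals $(d(x,y)^\alpha/t^\beta)^{1/(\alpha-\beta)}$, and $t\,\bar\phi_\alpha^{-1}((d(x,y)/t)^\alpha)=(d(x,y)^\alpha/t^\beta)^{1/(\alpha-\beta)}$ as well). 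This gives \eqref{e:08-0}, where ``$\asymp$'' rather than ``$\simeq$'' is forced by the freedom in the constants inside the exponentials both in \eqref{e:1.7} and in the estimate for $\nnu_t$.

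The main obstacle is the first step — sharp two-sided control of $\nnu_t$ across all regimes from weak scaling alone — together with, in the local case of part (ii), the bookkeeping in the saddle-point estimate and the clean identification of the exponent with $t\,\bar\phi_\alpha^{-1}((d(x,y)/t)^\alpha)$; once these are in hand the case analysis reduces to estimating integrals of power functions and Laplace-type integrals.
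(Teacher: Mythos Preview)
Your outline is sound and would work, but it takes a different technical route from the paper, and the step you flag as the main obstacle is precisely the one the paper avoids.

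The paper never establishes pointwise two-sided bounds on the density $\nnu_t(r)$ of $E_t$. Instead it proves the more general Theorems~\ref{theorem:mainjump} and~\ref{theorem:maindiff} (for arbitrary $V(x,\cdot)$ and $\Phi$ with weak scaling) and reads off Theorem~\ref{T:1.2} as a corollary. In those proofs only the \emph{distribution function} of $S_r$ enters: Proposition~\ref{p:sub} gives $\bP(S_r\ge t)\simeq r\phi(t^{-1})$ when $r\phi(t^{-1})$ is bounded, and $\bP(S_r\le t)\asymp\exp(-c\,t(\phi')^{-1}(t/r))$ when $r\phi(t^{-1})$ is large. The $r$-derivative implicit in $d_r\bP(S_r\ge t)$ is then transferred to the heat kernel by integration by parts in the representation \eqref{e:1.5}, and the resulting $\partial_r\bar q(r,x,z)$ is controlled via Lemmas~\ref{L:gd} and~\ref{L:gdd}, which give $|\partial_r\bar q|\le c\,\bar q/r$ together with sign information in the regimes $\Phi(z)\le c_* r$ and $\Phi(z)\ge c^* r$. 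This sidesteps your first step entirely: CDF bounds on $S_r$ under weak scaling are standard and short to prove, whereas the uniform estimate $\nnu_t(r)\simeq\phi(t^{-1})$ on $(0,c/\phi(t^{-1}))$ is not as directly ``known'' as you suggest --- the representation $\nnu_t(r)=\bE[w(t-S_r)\mathbf{1}_{\{S_r<t\}}]$ has a singular integrand near $S_r=t$, and the route through $\partial_r\sigma_r$ demands $r$-regularity of the subordinator density that is not part of the standard weak-scaling package.

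Once past that point the two arguments converge: your regime-by-regime analysis in part~(i) and the Laplace-type balancing in the local case of part~(ii), including the identification of the exponent with $t\,\bar\phi_\alpha^{-1}((d(x,y)/t)^\alpha)$, are essentially what the paper carries out inside the proofs of Theorems~\ref{theorem:mainjump} and~\ref{theorem:maindiff} (see the functions $G_1,G_2$ in \eqref{o:fun}) and in the final specialization. What your approach buys is conceptual directness; what the paper's approach buys is that its inputs --- the distribution of $S_r$ and the time-regularity of $q$ --- are strictly more elementary than pointwise density control of the inverse subordinator.
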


\medskip

\begin{remark} \label{e:remark}
At first glance, the estimate \eqref{e:08-0} may look odd  since the term $d(x,y)/t$ appears instead of the scaling term $d(x,y)\phi(t^{-1})^{1/\alpha}$ which appears
in the rest of the estimates in Theorem \ref{T:1.2}.
However, since
$$
t{\bar\phi_\alpha}^{-1}((d(x,y)/t)^\alpha)=\frac{{\bar\phi_\alpha}^{-1}(d(x,y)^\alpha/t^\alpha)}{{\bar\phi_\alpha}^{-1}(\bar\phi_\alpha(t^{-1}))}=
\frac{{\bar\phi_\alpha}^{-1}(d(x,y)^\alpha/t^\alpha)}{{\bar\phi_\alpha}^{-1}(1/(\phi(t^{-1})t^\alpha))},
$$
they are consistent.
 \end{remark}

\medskip

Let us consider a special case of
Theorem \ref{T:1.2} where $\{S_t:t\ge0\}$ is a $\beta$-stable subordinator for some $\beta \in (0, 1)$. In this case,  $\phi(s)=s^\beta$. Define
\begin{align*}
H_{\le 1}(t,d(x,y)) =&\begin{cases}
 t^{-\beta d/\alpha}, &   d<\alpha, \\
 t^{-\beta}\displaystyle \log \left(\frac{2}{{d(x,y)}t^{-\beta /\alpha}}\right),
 &  d=\alpha,\\
t^{-\beta d/\alpha} \left(d(x,y)t^{-\beta/\alpha}\right)^{-d+\alpha}
=t^{-\beta}/d(x,y)^{d-\alpha}, &  d>\alpha,
\end{cases}\\
H_{\ge 1}^{(c)}(t,d(x,y))
=& t^{-\beta d/\alpha}\exp\Big((d(x,y)t^{-\beta/\alpha})^{\alpha/(\alpha-\beta)}\Big),\\
H_{\ge 1}^{(j)}(t,d(x,y))
=& t^{-\beta d/\alpha}(d(x,y)t^{-\beta/\alpha})^{-(d+\alpha)}
=t^\beta/d(x,y)^{d+\alpha}.
\end{align*}

\medskip

 \begin{corollary}\label{T:special-case}
  Assume that conditions in Theorem $\ref{dichoTh}$ hold and $\phi(s)=s^\beta$ for $0<\beta<1$. Let $p(t,x,y)$ be given by \eqref{e:1.5}.
  \begin{itemize}
 \item[(i)] Suppose $F(s)=\exp(-s^{\alpha/(\alpha-1)})$ with $\alpha\ge 2$. Then
 \begin{align*}
p(t,x,y) \simeq H_{\le 1}(t,d(x,y))&\quad\mbox{ if }  d(x,y) t^{-\beta /\alpha}\le 1,\\
p(t,x,y) \asymp
H_{\ge 1}^{(c)}
(t,d(x,y))&\quad\mbox{ if }  d(x,y) t^{-\beta /\alpha}\ge 1.
 \end{align*}
\item[(ii)] Suppose $F(s)=(1+s)^{-d-\alpha}$. Then,
 \begin{align*}
p(t,x,y) \simeq H_{\le 1}(t,d(x,y))&\quad\mbox{ if }  d(x,y) t^{-\beta /\alpha}\le 1,\\
p(t,x,y) \simeq
H_{\ge 1}^{(j)}(t,d(x,y))
&\quad\mbox{ if }  d(x,y) t^{-\beta /\alpha}\ge 1.
 \end{align*}\end{itemize}
\end{corollary}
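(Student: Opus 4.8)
The plan is to deduce Corollary \ref{T:special-case} directly from Theorem \ref{T:1.2} by specializing the Laplace exponent to $\phi(s)=s^\beta$ and verifying that the abstract expressions unwind into the explicit functions $H_{\le 1}$, $H_{\ge 1}^{(c)}$ and $H_{\ge 1}^{(j)}$. First I would check that $\phi(s)=s^\beta$ with $0<\beta<1$ satisfies the weak scaling property \eqref{e:phi} with $\beta_1=\beta_2=\beta$ (indeed $\phi(\kappa\lambda)/\phi(\lambda)=\kappa^\beta$ exactly), so that Theorem \ref{T:1.2} applies; I would also note that under the hypotheses of Theorem \ref{dichoTh} the two shapes of $F$ in parts (i) and (ii) of the corollary are precisely the two alternatives in the dichotomy, corresponding respectively to the local case (with $\alpha\ge 2$) and the pure-jump case, so invoking Theorem \ref{T:1.2}(ii) in each regime is legitimate. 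Since $\phi(t^{-1})=t^{-\beta}$, the scaling quantity $d(x,y)\phi(t^{-1})^{1/\alpha}$ becomes $d(x,y)t^{-\beta/\alpha}$, which matches the case split in the corollary.

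For the near-diagonal regime $d(x,y)t^{-\beta/\alpha}\le 1$, I would simply substitute $\phi(t^{-1})=t^{-\beta}$ into each of the three branches of Theorem \ref{T:1.2}(i): $\phi(t^{-1})^{d/\alpha}=t^{-\beta d/\alpha}$ when $d<\alpha$; $\phi(t^{-1})\log(2/(d(x,y)\phi(t^{-1})^{1/\alpha}))=t^{-\beta}\log(2/(d(x,y)t^{-\beta/\alpha}))$ when $d=\alpha$; and $\phi(t^{-1})/d(x,y)^{d-\alpha}=t^{-\beta}/d(x,y)^{d-\alpha}$ when $d>\alpha$. These are exactly the three cases defining $H_{\le 1}(t,d(x,y))$, so the near-diagonal part of both (i) and (ii) follows immediately.

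For the off-diagonal pure-jump case I would substitute into the pure-jump line of Theorem \ref{T:1.2}(ii): $1/(\phi(t^{-1})d(x,y)^{d+\alpha})=t^\beta/d(x,y)^{d+\alpha}$, which is $H_{\ge 1}^{(j)}$, giving part (ii). The one genuinely non-trivial point — and the step I expect to be the main obstacle — is the off-diagonal local case: I must verify that the abstract exponent $t\,\bar\phi_\alpha^{-1}((d(x,y)/t)^\alpha)$ in \eqref{e:08-0} reduces, when $\phi(s)=s^\beta$, to $(d(x,y)t^{-\beta/\alpha})^{\alpha/(\alpha-\beta)}$ appearing in $H_{\ge 1}^{(c)}$. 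Here $\bar\phi_\alpha(\lambda)=\lambda^\alpha/\phi(\lambda)=\lambda^{\alpha-\beta}$, so its inverse is $\bar\phi_\alpha^{-1}(u)=u^{1/(\alpha-\beta)}$ (note $\alpha-\beta>0$ since $\alpha\ge 2>1>\beta$), and hence
\begin{equation*}
t\,\bar\phi_\alpha^{-1}\big((d(x,y)/t)^\alpha\big)=t\cdot\big(d(x,y)^\alpha/t^\alpha\big)^{1/(\alpha-\beta)}=t^{1-\alpha/(\alpha-\beta)}d(x,y)^{\alpha/(\alpha-\beta)}=t^{-\beta/(\alpha-\beta)}d(x,y)^{\alpha/(\alpha-\beta)},
\end{equation*}
which equals $\big(d(x,y)t^{-\beta/\alpha}\big)^{\alpha/(\alpha-\beta)}$ upon noting $(-\beta/\alpha)\cdot(\alpha/(\alpha-\beta))=-\beta/(\alpha-\beta)$. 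This exponent-arithmetic identity is the crux; once it is recorded, the $\asymp$ statement of \eqref{e:08-0} transfers verbatim into $p(t,x,y)\asymp H_{\ge 1}^{(c)}(t,d(x,y))$, completing part (i) and the corollary.
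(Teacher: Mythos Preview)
Your proposal is correct and takes essentially the same approach as the paper, which in fact states Corollary~\ref{T:special-case} without a separate proof and treats it as an immediate specialization of Theorem~\ref{T:1.2} to $\phi(s)=s^\beta$. Your explicit computation of $\bar\phi_\alpha^{-1}$ and the exponent arithmetic for the local off-diagonal case is precisely the verification the paper leaves to the reader.
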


We note that when $M=\R^d$ and $X$ is
a  rotationally symmetric $\alpha$-stable process on $\R^d$,
 (part of statements in) Corollary \ref{T:special-case} (ii) have been obtained in \cite[Lemma 2.1]{FN}.

\subsection{General case}
In this subsection, we give
a general version
of the heat kernel estimates for the time fractional equation \eqref{e:1.3}.

Recall that  $( M,d, \mu ) $ is
a locally compact separable metric measure space such that $\mu$ is a Radon measure on
$( M,d )$  with full support.
Throughout this paper
we assume
$X$ is a strong Markov process on $M$ having infinite lifetime.
 For $x\in M$ and $r\ge 0$, define
\[
V(x,r)=\mu(B(x,r)).
\]
We further assume that for each $x\in M$, $V(x,\cdot)$ satisfies the weak scaling property
uniformly with $(d_1,d_2)$ for some $d_2 \ge d_1>0$;
that is, for any $0<r\le R$ and $x\in M$,
\begin{equation}\label{vd1} c_1\left(\frac R r\right)^{d_1}\le \frac{V(x,R)}{V(x,r)}\le c_2 \left( \frac R r\right)^{d_2}.\end{equation}
Note that \eqref{vd1} is equivalent to the so-called volume doubling and reverse volume doubling
conditions.
As in the previous section, we also assume that the Laplace exponent $\phi$ of the driftless subordinator $S=\{S_t: t\geq 0\}$ satisfies \eqref{e:phi}.

\subsubsection{{\bf Pure jump case}}
We first consider the case that the
strong Markov process $X$ has a transition density function $q(t, x, y)$ with respect to $\mu$
 that enjoys the following two-sided estimates:
\begin{equation}\label{e:hkmequi}q(t,x,y)\simeq
\frac{1}{V(x, \Phi^{-1}(t))} \wedge \frac{t}{V(x, d(x,y))\Phi(d(x,y))},
\quad t>0, x,y\in M, \end{equation}
where $\Phi :[0,+\infty )\rightarrow \lbrack 0,+\infty )$
is a strictly increasing function with $\Phi(0)=0$ that satisfies the weak scaling property with $(\alpha_1,\alpha_2)$, i.e., \eqref{e:lleeqpa} is satisfied.

Examples of symmetric Markov processes satisfying the above condition can be found in \cite{CK,CKW}.
It is known that symmetric Markov processes  enjoy heat kernel estimates \eqref{e:hkmequi} are of pure jump type.
Note that when $V(x,r)\simeq r^d$ and $\Phi(s)=s^\alpha$ for $r,s>0$ and $x\in M$, then for any $x,y\in M$ and $t>0$,
$V(x, \Phi^{-1}(t))\simeq t^{d/\alpha}$ and $V(x, d(x,y))\Phi(d(x,y))
\simeq d(x,y)^{d+\alpha}$, so
\eqref{e:hkmequi} boils down to \eqref{e:1.8}.

\medskip

Here is the heat kernel estimates for the time fractional equation \eqref{e:1.3}.

\begin{theorem}\label{theorem:mainjump}
Suppose that the heat kernel of
the strong Markov process $X$ enjoys two-sided estimates
\eqref{e:hkmequi}. Let $p(t,x,y)$ be given by \eqref{e:1.5}. Then we have the following two statements:
\begin{itemize}
\item[(i)]
If   $\Phi(d(x,y))\phi(t^{-1}) \le 1$, then
\begin{equation}\label{e:llffoo}\begin{split} p(t,x,y) &\simeq   \phi(t^{-1}) \int_{\Phi(d(x,y))}^{2/\phi(t^{-1})} \frac{1}{V(x,\Phi^{-1}(r))}\,dr\\
    &=\int_{\Phi(d(x,y))\phi(t^{-1})}^{2} \frac{1}{V(x,\Phi^{-1}(r/\phi(t^{-1})))}\,dr.
		\end{split}
		\end{equation}
		
\item[(ii)]
If   $\Phi(d(x,y))\phi(t^{-1}) \ge 1$, then $$p(t,x,y)\simeq
  \frac1{\phi(t^{-1}) V(x,d(x,y))\,  \Phi(d(x,y)) }.
 $$
\end{itemize}
\end{theorem}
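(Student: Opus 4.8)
The plan is to compute $p(t,x,y)=\int_0^\infty q(r,x,y)\,d_r\bP(S_r\ge t)$ directly from the two-sided estimate \eqref{e:hkmequi}, by inserting the known asymptotics for the subordinator distribution into the integral and splitting the $r$-range according to where each of the two competing terms in \eqref{e:hkmequi} dominates. First I would record the basic facts I need about $\bP(S_r\ge t)$ under the scaling assumption \eqref{e:phi}: by a standard subordinator estimate (see e.g.\ the Laplace-transform/Tauberian arguments available under weak scaling), one has $\bP(S_r\ge t)\simeq r\phi(1/t)$ for $r\le 1/\phi(1/t)$, together with a matching upper bound that decays for $r\gg 1/\phi(1/t)$; correspondingly the measure $d_r\bP(S_r\ge t)$ has total mass $1$ and is, up to constants, comparable to $\phi(1/t)\,dr$ on the regime $r\lesssim 1/\phi(1/t)$ and is rapidly decaying beyond it. I expect a separate technical lemma (stated and proved earlier in the paper, or invoked here) packaging exactly these two-sided bounds for $\bP(S_r\ge t)$ and its $r$-derivative; I would cite it and proceed.

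Next I would substitute. Writing $\rho=d(x,y)$ and using \eqref{e:hkmequi}, split $\int_0^\infty$ at $r=\Phi(\rho)$: for $r\le \Phi(\rho)$ the heat kernel is comparable to $r/(V(x,\rho)\Phi(\rho))$, while for $r\ge\Phi(\rho)$ it is comparable to $1/V(x,\Phi^{-1}(r))$. In case (i), $\Phi(\rho)\phi(1/t)\le 1$, so the crossover point $\Phi(\rho)$ lies below the mass-concentration scale $1/\phi(1/t)$ of $d_r\bP(S_r\ge t)$; the integral over $r\in[\Phi(\rho),2/\phi(1/t)]$ against $\phi(1/t)\,dr$ produces the main term $\phi(1/t)\int_{\Phi(\rho)}^{2/\phi(1/t)} V(x,\Phi^{-1}(r))^{-1}\,dr$, and I must check that (a) the contribution from the small-$r$ piece $r\le\Phi(\rho)$ is of the same order or smaller — here I would use $\int_0^{\Phi(\rho)} \frac{r}{V(x,\rho)\Phi(\rho)}\,d_r\bP(S_r\ge t)\lesssim \phi(1/t)\Phi(\rho)/V(x,\rho)$ and compare it with the displayed integral, using the weak scaling of $\Phi$ and of $V(x,\cdot)$ to see it is absorbed — and (b) the tail $r\ge 2/\phi(1/t)$ contributes a lower-order amount because $d_r\bP(S_r\ge t)$ decays and $1/V(x,\Phi^{-1}(r))$ is itself decreasing. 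The change of variables $r\mapsto r/\phi(1/t)$ then gives the second displayed formula in \eqref{e:llffoo}. In case (ii), $\Phi(\rho)\phi(1/t)\ge 1$, so now the crossover point $\Phi(\rho)$ lies \emph{above} the concentration scale, hence on essentially the entire effective range $r\lesssim 1/\phi(1/t)$ of the measure the heat kernel is comparable to $r/(V(x,\rho)\Phi(\rho))$; integrating $\frac{r}{V(x,\rho)\Phi(\rho)}$ against $d_r\bP(S_r\ge t)$ gives $\frac{1}{V(x,\rho)\Phi(\rho)}\,\bE[S_{?}\wedge\cdots]$ — more precisely it produces $\frac{1}{V(x,\rho)\Phi(\rho)}\cdot\frac{1}{\phi(1/t)}$ since $\int_0^\infty r\,d_r\bP(S_r\ge t)$ localized to the concentration scale is of order $1/\phi(1/t)$ — while the far tail $r\ge\Phi(\rho)$, where $q$ switches to $1/V(x,\Phi^{-1}(r))$, contributes a negligible amount because $d_r\bP(S_r\ge t)$ is already exponentially (or at least rapidly) small there. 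This yields $p(t,x,y)\simeq \frac{1}{\phi(1/t)V(x,\rho)\Phi(\rho)}$.

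The main obstacle I anticipate is the bookkeeping of the \emph{tail} of the subordinator distribution: for the lower bounds in case (i) I need that the mass of $d_r\bP(S_r\ge t)$ on $[c/\phi(1/t),2/\phi(1/t)]$ is a fixed positive fraction (not just an upper bound $\phi(1/t)\,dr$), and for both cases I need quantitative control showing the contributions from $r$ much larger than $1/\phi(1/t)$ are genuinely lower order — this is where the full strength of the weak scaling \eqref{e:phi}, which forces $\phi(\lambda)\to\infty$ like $\lambda^{\beta_1}$ and prevents pathologically heavy tails, gets used, and it is the step requiring the most care rather than the routine integration. A secondary, purely computational point is verifying in case (i) that the two equivalent expressions in \eqref{e:llffoo} are genuinely comparable to each other and that the small-$r$ boundary term does not spoil the lower bound when $\Phi(\rho)\phi(1/t)$ is close to $1$; this is handled by the weak scaling inequalities \eqref{e:lleeqpa} and \eqref{vd1} but should be spelled out.
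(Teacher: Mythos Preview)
Your overall strategy --- split the $r$-integral according to where the two branches of \eqref{e:hkmequi} dominate, feed in subordinator tail estimates, and control the small-$r$, main, and large-$r$ pieces separately --- is the same plan the paper follows. The decisive technical device, however, is different, and your version has a genuine gap.

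You work directly against the measure $d_r\bP(S_r\ge t)$ and assert that it ``is, up to constants, comparable to $\phi(1/t)\,dr$'' on $r\lesssim 1/\phi(1/t)$. This pointwise density bound is \emph{not} what the paper's subordinator lemma (Proposition~\ref{p:sub}) provides: that result gives two-sided bounds on $\bP(S_r\ge t)$ itself, namely $\bP(S_r\ge t)\simeq r\phi(t^{-1})$ for $r\phi(t^{-1})\le L$, together with an exponential upper bound on $\bP(S_r\le t)$ for large $r\phi(t^{-1})$. From $c_1 r\phi(t^{-1})\le \bP(S_r\ge t)\le c_2 r\phi(t^{-1})$ one cannot extract a pointwise lower bound on the $r$-derivative, so the step ``integrating $1/V(x,\Phi^{-1}(r))$ against $\phi(1/t)\,dr$'' has no justification as a lower bound.

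The paper circumvents this by \emph{integration by parts}: it introduces the smooth comparison kernel $\bar q(t,x,r)=t/\bigl(tV(x,\Phi^{-1}(t))+\Phi(r)V(x,r)\bigr)$, proves the derivative estimates $|\partial_t\bar q|\le c\,\bar q/t$ with the sign information \eqref{e:04m1}--\eqref{e:04m2}, and then transfers the $r$-derivative from $\bP(S_r\ge t)$ to $\bar q$. After integration by parts, the lower bound in case~(i) only requires $\bP(S_r\ge t)\ge c\,r\phi(t^{-1})$ (which \emph{is} available) combined with $-\partial_r\bar q(r,z)\ge c/(rV(\Phi^{-1}(r)))$ on $r\ge c_1\Phi(z)$; this is exactly what produces the full integral $\phi(t^{-1})\int_{\Phi(z)}^{2/\phi(t^{-1})}V(\Phi^{-1}(r))^{-1}\,dr$.

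Your stated lower-bound argument --- ``the mass of $d_r\bP(S_r\ge t)$ on $[c/\phi(1/t),2/\phi(1/t)]$ is a fixed positive fraction'' --- only yields $p(t,x,y)\gtrsim 1/V(x,\Phi^{-1}(1/\phi(t^{-1})))$. This matches the target integral when $d_2<\alpha_1$, but is strictly weaker when $d_1>\alpha_2$ (where the integral is $\simeq \Phi(\rho)\phi(t^{-1})/V(x,\rho)$) and misses the logarithm in the critical case $d_1=d_2=\alpha_1=\alpha_2$; see Corollary~\ref{T:mix2}. The paper handles the residual terms by combining the integration-by-parts output with the a~priori lower bound of Lemma~\ref{L:lower1m} to absorb a negative boundary contribution. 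A similar issue arises in your tail estimate $r\ge 2/\phi(t^{-1})$: the available input is $\bP(S_r\le t)\le \exp(-c\,t(\phi')^{-1}(t/r))$, not a bound on its derivative, so again integration by parts (and the smoothness of $\bar q$) is what makes the argument go through.
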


\begin{remark}\label{rem-uplow2} (1) Note that, by some elementary calculations (see \eqref{e:rrffee1} and \eqref{e:com1} below), we have
$$\left(\frac{1}{V(x, \Phi^{-1}(1/\phi(t^{-1})))}\vee  \frac{\Phi(d(x,y))\phi(t^{-1}) }{V(x,d(x,y))}\right)\le c \phi(t^{-1}) \int_{\Phi(d(x,y))}^{2/\phi(t^{-1})} \frac{1}{V(x,\Phi^{-1}(r))}\,dr.$$
Roughly speaking,
when $s\mapsto s^{-1} V(x,\Phi^{-1}(s))$ is  strictly increasing,
$$
p(t,x,y) \simeq \frac{\Phi(d(x,y))\phi(t^{-1}) }{V(x,d(x,y))}\quad \hbox{if } \Phi(d(x,y))\phi(t^{-1}) \le 1.
$$
When $s\mapsto s^{-1} V(x,\Phi^{-1}(s))$ is strictly deceasing,
$$
p(t,x,y) \simeq \frac{1}{V(x, \Phi^{-1}(1/\phi(t^{-1})))}\quad \hbox{if } \Phi(d(x,y))\phi(t^{-1}) \le 1.
$$
 For the critical case, the logarithmic factor will appear, see Theorem \ref{T:1.2} (i) and Corollary \ref{T:special-case}, or Corollary \ref{T:mix2} for the explicit
 statements.

(2)
As the proof shows, if we assume the upper (resp.\ lower) bound in \eqref{e:hkmequi}, then
the upper (resp.\ lower) bounds of $p(t,x,y)$ hold in the statement of Theorem \ref{theorem:mainjump}.
\end{remark}
\medskip

\subsubsection{{\bf Diffusion case}}

We next consider the case that
the strong Markov process $X$ is a conservative diffusion;
that is, it has continuous sample paths and infinite lifetime.
In this case,
we further assume that
the metric space $( M,d ) $ is connected and satisfies the chain condition.
Moreover, we assume that the heat kernel of the diffusion $X$ with respect to $\mu$ exists and  enjoys the
following two-sided estimates
\begin{equation}\label{eq:fibie3}
q(t,x,y)\asymp \frac{1}{V(x, \Phi^{-1}(t))} \exp\left(-m(t,d(x,y))\right),
\quad t>0, x, y\in M.
\end{equation}
Here, $\Phi :[0,+\infty )\rightarrow \lbrack 0,+\infty )$
is a strictly increasing function with $\Phi(0)=0$, and satisfies the weak scaling property with $(\alpha_1,\alpha_2)$ such that
the constants $\alpha_2 \ge \alpha_1>1$ in \eqref{e:lleeqpa};
the function $m(t,r)$
is strictly positive for all $t,r>0$,
 non-increasing  on $(0,\infty)$ for fixed $r>0$, and  determined by
\begin{equation}\label{e:scdf}
\frac{t}{m(t,r)}\simeq \Phi\left(\frac{r}{m(t,r)}\right),\quad t,r>0.\end{equation} In particular, by \eqref{e:lleeqpa} with $\alpha_1>1$ and \eqref{e:scdf}, there are constants $c_1,c_2>0$ such that for all $r>0$,
\begin{equation}\label{e:scdf-1}c_1\left(\frac{T}{t}\right)^{-1/(\alpha_1-1)}\le \frac{m(T,r)}{m(t,r)}\le c_2\left(\frac{T}{t}\right)^{-1/(\alpha_2-1)},\quad 0<t\le T.
\end{equation}
On the other hand, by \eqref{e:scdf} we have
 \begin{align}
 \label{e:mphisim1}
 m(\Phi(r),r)\simeq1, \quad  r>0.
 \end{align}
Using this and the fact that  $m(\cdot,r)$ is non-increasing,
we have
\begin{equation}\label{d:ond}
q(t,x,y)\simeq  \frac{1}{V(x, \Phi^{-1}(t))}
\quad \hbox{when } \Phi(d(x,y))\le c_3 t.
\end{equation}

Note that when $V(x,r)\simeq r^d$ and $\Phi(s)=s^\alpha$ for $r,s>0$ and $x\in M$, then for every $x,y\in M$ and $t>0$,
$V(x, \Phi^{-1}(t))\simeq t^{d/\alpha}$ and $m(t,d(x,y))\simeq (d(x,y)^\alpha/t)^{1/(\alpha-1)}$,
and so \eqref{eq:fibie3} is reduced to \eqref{e:1.7}.
Examples of
conservative symmetric diffusions satisfying  condition \eqref{eq:fibie3}
include diffusions on fractals such as Sierpinski gaskets and Sierpinski carpets.
For example, Brownian motion on the $2$-dimensional Sierpinski gasket enjoys
\eqref{e:1.7} (hence \eqref{eq:fibie3}) with $d=\log 3/\log 2$ and $\alpha=\log 5/\log 2>2$.
See \cite{HK} and \cite[Section 13]{telcs} for more examples.
Intuitively, $m(t,d(x,y))$ in \eqref{eq:fibie3} is an optimal number of
steps for diffusions to reach from $x$ to $y$
at time $t$. As one sees in \eqref{e:scdf}, the time and the distance are divided by $m(t,d(x,y))$ so that the
relation between them is given by $\Phi$. Then one decomposes the path from $x$ to $y$ into
$m(t,d(x,y))$-th \lq most probable\rq\, paths on which the near-diagonal heat kernel estimates hold,
and uses the chain argument.
This is how the off-diagonal estimates (exponential part of \eqref{eq:fibie3}) can be deduced on various
concrete examples such as diffusions on fractals.

\medskip

Here is the heat kernel estimates for the time fractional equation \eqref{e:1.3}.
\begin{theorem}\label{theorem:maindiff}
Suppose that the heat kernel of
the conservative diffusion process $X$
enjoys estimates
\eqref{eq:fibie3}. Let $p(t,x,y)$ be given by \eqref{e:1.5}. Then we have the following two statements:
\begin{itemize}
\item[(i)] If $\Phi(d(x,y))\phi(t^{-1})\le 1$, then
\begin{align*}p(t,x,y) \simeq &\int_{\Phi(d(x,y))\phi(t^{-1})}^{2}\frac{1}{V(x,\Phi^{-1}(r/\phi(t^{-1})))}\,dr.\end{align*}

\item[(ii)]If $\Phi(d(x,y))\phi(t^{-1})\ge 1$, then
there exist constants $c_i>0$ $(i=1, \dots, 4)$ such that
\begin{align*}
\frac{c_1}{V(x, \Phi^{-1}(1/\phi(t^{-1}))))} & \exp(-c_2n(t,d(x,y)))\\
  & \le   p(t,x, y) \\
  &\le \frac{c_3}{V(x, \Phi^{-1}(1/\phi(t^{-1}))))} \exp(-c_4n(t,d(x,y))),
\end{align*}
where
$n(\cdot, r)$ is a non-increasing function on $(0,\infty)$ determined by
\begin{align}
\label{e:phnd}
\frac{1}{\phi(n(t,r)/t)} \simeq \Phi\left(\frac{r}{n(t,r)}\right),\quad t,r>0.
\end{align}
\end{itemize}
\end{theorem}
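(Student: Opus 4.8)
The plan is to work from the probabilistic representation. Writing $\rho_t$ for the density of the inverse subordinator $E_t$ (which exists since $t\mapsto E_t$ is continuous under $\nu(0,\infty)=\infty$), the identity \eqref{e:1.5} reads $p(t,x,y)=\int_0^\infty q(r,x,y)\,\rho_t(r)\,dr$. Under the weak scaling hypothesis \eqref{e:phi} one has sharp two-sided estimates for $\rho_t$, which I will use as a preliminary lemma: there are constants such that $\rho_t(r)\simeq\phi(t^{-1})$ for $0<r\le C_0/\phi(t^{-1})$, while for $r\ge 1/\phi(t^{-1})$ one has $c\,\phi(t^{-1})\exp(-CR(t,r))\le\rho_t(r)\le C\,\phi(t^{-1})\exp(-cR(t,r))$, where $R(t,r)\simeq t\,\psi^{-1}(r/t)$ with $\psi(\lambda):=\lambda/\phi(\lambda)$. (These bounds follow from the double Laplace transform $\int_0^\infty e^{-\lambda t}\rho_t(r)\,dt=\phi(\lambda)\lambda^{-1}e^{-r\phi(\lambda)}$ and a saddle point analysis using \eqref{e:phi}, or from known lower-tail large-deviation estimates for subordinators with weak scaling.) Plugging \eqref{eq:fibie3} into the representation, the whole problem reduces to estimating $\int_0^\infty V(x,\Phi^{-1}(r))^{-1}\exp(-m(r,d(x,y)))\,\rho_t(r)\,dr$ in the two ranges of $\Phi(d(x,y))\phi(t^{-1})$.

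\emph{Case (i): $\Phi(d(x,y))\phi(t^{-1})\le1$.} Here $\Phi(d(x,y))\le1/\phi(t^{-1})$, and I split the integral at (a constant multiple of) $\Phi(d(x,y))$ and at $C_0/\phi(t^{-1})$. On the middle interval one has $\rho_t\simeq\phi(t^{-1})$ and, by \eqref{d:ond}, $q(r,x,y)\simeq V(x,\Phi^{-1}(r))^{-1}$, so this portion equals $\phi(t^{-1})\int_{\Phi(d(x,y))}^{C_0/\phi(t^{-1})}V(x,\Phi^{-1}(r))^{-1}\,dr$, which after the substitution $r\mapsto r/\phi(t^{-1})$ is comparable to the claimed expression (the discrepancy between the constant $C_0$ and $2$ in the upper limit, and between $\Phi(d(x,y))$ and its constant multiple, is harmless by \eqref{vd1} and the weak scaling of $\Phi^{-1}$). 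It remains to see the other two pieces are dominated by the middle one. On $(0,\Phi(d(x,y)))$ the exponential is controlled by $\exp(-c(\Phi(d(x,y))/r)^{1/(\alpha_2-1)})$ via \eqref{e:mphisim1} and \eqref{e:scdf-1}; bounding $V(x,\Phi^{-1}(r))^{-1}$ from above by reverse doubling and using convergence of $\int_0^1 u^{-d_2/\alpha_1}e^{-cu^{-1/(\alpha_2-1)}}\,du$ (here $\alpha_2>1$), this contributes $\lesssim\Phi(d(x,y))\phi(t^{-1})/V(x,d(x,y))$, at most a constant times the middle portion. On $(C_0/\phi(t^{-1}),\infty)$ the factor $V(x,\Phi^{-1}(r))^{-1}$ is decreasing and $\int\rho_t\le1$, so this contributes $\lesssim V(x,\Phi^{-1}(1/\phi(t^{-1})))^{-1}$, again dominated. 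This gives (i).

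\emph{Case (ii): $\Phi(d(x,y))\phi(t^{-1})\ge1$.} Now $d(x,y)$ is large, the off-diagonal exponential in \eqref{eq:fibie3} dominates, and the estimate comes from a Laplace/saddle point analysis of $\int_0^\infty V(x,\Phi^{-1}(r))^{-1}\exp(-m(r,d(x,y))-R(t,r)\I_{\{r>1/\phi(t^{-1})\}})\,dr$ (up to polynomial prefactors). The crux is to identify $\min_{r>0}\big(m(r,d(x,y))+R(t,r)\big)$. Writing $d=d(x,y)$ and $M$ for the value of $m(\cdot,d)$ at a minimizing $r_*$: since $m(\cdot,d)$ is non-increasing and $R(t,\cdot)$ increasing, one first checks (comparing derivatives at $1/\phi(t^{-1})$, where $R\simeq1$ and $|\partial_r m|/\partial_r R\simeq m(1/\phi(t^{-1}),d)\gtrsim1$) that $r_*\ge c/\phi(t^{-1})$, so $M=m(r_*,d)\le m(1/\phi(t^{-1}),d)$; then $r_*\simeq M\,\Phi(d/M)$ from \eqref{e:scdf} and $r_*\simeq M/\phi(M/t)$ from $R(t,r_*)\simeq M$ together with $R(t,r)\simeq t\psi^{-1}(r/t)$, and eliminating $r_*$ yields $\Phi(d/M)\simeq1/\phi(M/t)$ — which is exactly the defining relation \eqref{e:phnd} for $n(t,d)$, whence $M\simeq n(t,d)$. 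The upper bound then follows by summing the integral geometrically about $r_*$ (using the weak scaling of $\Phi,\phi$ to see the integrand decays on both sides, and the bound $\rho_t\le C\phi(t^{-1})$ plus monotonicity on $(0,1/\phi(t^{-1})]$ together with $m(1/\phi(t^{-1}),d)\ge m(r_*,d)\gtrsim n(t,d)$ for that stretch); the lower bound by restricting the integral to an interval of comparable length around $r_*$ and invoking the lower estimates in \eqref{eq:fibie3} and for $\rho_t$. This produces $V(x,\Phi^{-1}(r_*))^{-1}\exp(-c\,n(t,d))$. Finally, $\Phi(d)\phi(t^{-1})\ge1$ forces $r_*\phi(t^{-1})$ to be bounded by a power of $n(t,d)$, so replacing the prefactor $V(x,\Phi^{-1}(r_*))^{-1}$ by $V(x,\Phi^{-1}(1/\phi(t^{-1})))^{-1}$ costs only a factor polynomial in $n(t,d)$ (by \eqref{vd1} and the weak scaling of $\Phi^{-1}$), which is absorbed into the constant inside the exponential since $\log s\le s$ for $s\ge1$ — this is precisely why the diffusion estimate is stated with $\asymp$ rather than $\simeq$. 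This gives (ii).

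\emph{Main obstacle.} The technical heart is Case (ii): turning the saddle point heuristic into rigorous two-sided bounds using \emph{only} the weak scaling inequalities \eqref{e:lleeqpa} and \eqref{e:phi} — in particular proving $\min_r\big(m(r,d)+R(t,r)\big)\asymp n(t,d)$ with no explicit formulas, and keeping precise track of which polynomial prefactors may be swallowed by the exponential (and hence why one only gets $\asymp$). A secondary but essential ingredient is the sharp two-sided density estimate for $\rho_t$ under \eqref{e:phi}; if it is not already available, establishing it (via the Laplace transform/saddle point argument above) is itself a substantial part of the work.
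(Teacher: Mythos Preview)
Your overall architecture is sound and identifies the same ``crossing point'' as the paper: the point $r_*$ (the paper's $r_0$) where the decreasing function $m(\cdot,d)$ meets the increasing exponent coming from the lower tail of $S_r$, and your identification of $m(r_*,d)\simeq n(t,d)$ via \eqref{e:scdf} and the implicit relation for $R(t,\cdot)$ is exactly the computation the paper carries out at the end of its Case~(2-b). The absorption of the polynomial prefactor into the exponential, and the reason this forces $\asymp$ rather than $\simeq$, is also the same.

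Where your route genuinely diverges from the paper is in the preliminary inputs. You work with the density $\rho_t$ of $E_t$ and assume two-sided pointwise bounds for it; the paper never does this. Instead it integrates by parts in $\int q(r,x,y)\,d_r\bP(S_r\ge t)$, transferring the burden to (a)~distribution-function estimates $\bP(S_r\ge t)\simeq r\phi(t^{-1})$ and $\bP(S_r\le t)\asymp \exp(-c\,t(\phi')^{-1}(t/r))$ (its Proposition~3.3, drawn from Mimica and Jain--Pruitt), and (b)~time-derivative estimates for the heat-kernel shape $\bar q(t,x,r)=V(x,\Phi^{-1}(t))^{-1}\exp(-m(t,r))$, namely $|\partial_t\bar q|\le c\,t^{-1}\bar q^*$ together with sign information $\partial_t\bar q\le -c\,\bar q/t$ for $t\gg\Phi(r)$ and $\partial_t\bar q\ge c\,\bar q/t$ for $t\ll\Phi(r)$ (its Lemma~5.1, proved after first replacing $V,\Phi,m$ by smooth comparable versions via a complete-Bernstein-function trick, its Lemma~3.2). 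So the paper trades your density lemma for a heat-kernel derivative lemma. Each approach has its own overhead: your route is conceptually more direct but requires the pointwise density bounds for $\rho_t$, whose \emph{lower} bound is cheap (from $\rho_t(r)=\bE[w(t-S_r)\mathbf 1_{\{S_r<t\}}]\ge w(t)\,\bP(S_r<t)$ and $w(t)\simeq\phi(t^{-1})$) but whose \emph{upper} bound in the regime $r\phi(t^{-1})\gg1$ is the delicate part, since $w(t-S_r)$ blows up near $S_r=t$; the paper's route sidesteps this entirely at the cost of differentiating the heat kernel in time. If you can supply the upper density bound cleanly, your argument is complete and arguably tidier in Case~(i); if not, the paper's integration-by-parts device is the safer path.
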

\begin{remark}\label{rem-uplow}
(1)
As mentioned above, $p(t,x,y)$ given by \eqref{e:1.5} is the ``fundamental solution" to the time fractional equation \eqref{e:1.3}, and so $p(t,x,y)$ closely relates to the process $X_E:=\{X_{E_t}: t\ge0\}$, where $\{E_t:t\ge0\}$ is
the inverse subordinator with respect to $S$.
 Estimates for the distribution of subordinator $S$ collected in Proposition \ref{p:sub}\,(i) below show that, from the process $X$ to the time-change process $X_E$, the time scale will be changed from $t$ to $1/\phi(t^{-1})$. By this observation, we can partly give the intuitive explanation of the shape of the heat kernel estimates in
Theorems \ref{theorem:mainjump} and \ref{theorem:maindiff}. In particular, the case
that  $\Phi(d(x,y))\phi(t^{-1})\le 1$ corresponds to ``near-diagonal" estimates of $p(t,x,y)$, while the case that $\Phi(d(x,y))\phi(t^{-1})\ge 1$ can be regarded as ``off-diagonal" estimates.

(2) When $\Phi(d(x,y))\phi(t^{-1})\le 1$, two-sided estimates of $p(t,x,y)$ for  time fractional diffusion  processes
enjoy the same form as these for time fractional jump processes,
see Theorems \ref{theorem:mainjump} (i) and \ref{theorem:maindiff} (i).
Similar to Theorem \ref{theorem:mainjump},
as the proof shows, if we assume the upper (resp.\ lower) bound in \eqref{eq:fibie3}, then
the upper (resp.\ lower) bounds of $p(t,x,y)$ hold in the statement of Theorem \ref{theorem:maindiff}.
\end{remark}

\bigskip

The rest of the paper is organized as follows.
In Section \ref{S:2}, we assume $X$ is an $\mu$-symmetric strong Markov process on $M$.
Then its transition semigroup $\{T_t; t\geq 0\}$ is a strongly continuous contraction semigroup in $L^2(M; \mu)$.
Denote its infinitesimal generator by $(\sL, \sD (\sL))$. We show that
for every $f\in L^2(M; \mu)$, the general time fractional equation  \eqref{e:1.3} has a unique
weak solution $u(t, x)$ in $L^2(M; \mu)$ with initial value $f$, and the solution has a representation
$u(t, x)=\bE \left[ f (X_{E_t}) \right]$. This result relaxes the condition
that $f\in \sD (\sL)$ imposed in \cite[Theorem 2.3]{Chen} at the expense of formulating
the solution to \eqref{e:1.3} in the weak sense rather than in the strong sense
and by imposing symmetry on the process $X$.
In Section \ref{S:3}, we present some preliminary estimates about Bernstein functions and subordinators. In particular, we establish the relation between the weak scaling property and Bernstein functions, which is interesting of its own.
Section \ref{S:4} and Section \ref{S:5} are devoted to proofs of
the main results of this paper, Theorems \ref{theorem:mainjump} and \ref{theorem:maindiff}, respectively.
Theorem \ref{T:1.2} is then obtained as a corollary of Theorems
\ref{theorem:mainjump} and \ref{theorem:maindiff}.

\section{Time fractional equations}\label{S:2}

 Let $X=\{X_t, t\geq 0; \, \bP_x, x\in  M\}$ be a strong Markov process on
  a separable locally compact Hausdorff space
 $M$
  whose transition semigroup $\{T_t, t\geq 0\}$ is a uniformly bounded strong continuous semigroup in some Banach space $(\bB, \| \cdot \|)$.
   Let  $(\sL, \sD (\sL))$ be the infinitesimal generator of $\{P_t, t\geq 0\}$ in $\bB$.
Recall that $S=\{ S_t:  t\ge 0 \}$ is a subordinator with the Laplace exponent $\phi$
given by \eqref{e:1.1} with the infinite L\'evy measure $\nu$.
Define $w(x)=\nu(x,\infty)$ for $x>0$. Since $\nu(0,\infty)=\infty$, almost surely, $t\mapsto S_t$ is strictly increasing.
The following is a particular case of  a recent result established in
\cite[Theorem 2.1]{Chen}.

\begin{theorem}\label{T:2.1}
{\rm (\cite[Theorem 2.1]{Chen})}
 For  every $f\in \sD (\sL)$, $u(t, x):= \bE \left[ T_{E_t} f (x) \right]$ is a solution in   $(\bB, \| \cdot \|)$ to
 the time fractional equation \eqref{e:1.3} in the following sense:

\begin{itemize}

\item[\rm (i)]   $x\mapsto u(t, x)  $ is in $\sD (\sL)$ for each $t\geq 0$,
  and  both $t\mapsto u(t, \cdot)$ and $t\mapsto \sL u(t, \cdot)$
  are continuous and bounded in $(\bB, \| \cdot \|)$.
  Consequently,
  $$
  I^w_t (u (\cdot, x))  :=\int_0^t w(t-s) (u(s, x) -f(x))\, ds
  $$
   is absolutely convergent in
$(\bB, \| \cdot \|)$ for every $t>0$.

\item[\rm (ii)] For every $t>0$,  $$
  \lim_{\delta \to 0} \frac{1}{\delta}  \left(  I_{t+\delta}^w(u (\cdot, x)) - I_t^w(u (\cdot, x)) \right) = \sL u(t, x) \quad \hbox{in }
(\bB, \| \cdot \|).
$$
\end{itemize}

Conversely, if $u(t, x)$ is a solution to \eqref{e:1.3} in the sense of  {\rm (i)}  and {\rm (ii)}  above with  $f\in \sD (\sL)$,
 then $u(t, x)= \bE \left[ T_{E_t} f (x) \right]$ in
 $\bB$
for every $t\geq 0$.
\end{theorem}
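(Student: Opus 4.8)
The plan is to work with Laplace transforms in the time variable $t$, exploiting that the ``symbol'' of the operator $\partial_t^w$ is exactly the Laplace exponent $\phi$, together with the fact that the inverse subordinator $E$ intertwines the two sides of \eqref{e:1.3}. First I would establish the regularity asserted in (i). Since $\nu(0,\infty)=\infty$, almost surely $t\mapsto S_t$ is strictly increasing, so $t\mapsto E_t$ is continuous with $E_0=0$ and $E_t<\infty$ for every $t$. For $f\in\sD(\sL)$ one has $T_sf\in\sD(\sL)$ and $\sL T_sf=T_s\sL f$ for each $s\ge 0$, with $\|T_sf\|\le M\|f\|$ and $\|T_s\sL f\|\le M\|\sL f\|$, where $M:=\sup_{s\ge0}\|T_s\|<\infty$; hence $s\mapsto T_sf$ and $s\mapsto T_s\sL f$ are bounded continuous maps into $\sD(\sL)$, resp.\ $\bB$. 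Taking Bochner expectations over $\Omega$ and using closedness of $\sL$ (closed operators commute with Bochner integrals) gives $u(t,\cdot)\in\sD(\sL)$ with $\sL u(t,\cdot)=\bE[T_{E_t}\sL f]$, while a.s.\ continuity of $t\mapsto E_t$ and dominated convergence give continuity and boundedness in $\bB$ of $t\mapsto u(t,\cdot)$ and $t\mapsto\sL u(t,\cdot)$, and $u(0,\cdot)=f$. The absolute convergence of $I^w_t(u(\cdot,x))$ follows since $w$ is non-increasing with $\int_0^1 w(s)\,ds=\int_0^\infty(1\wedge s)\,\nu(ds)<\infty$, so $w\in L^1_{\mathrm{loc}}[0,\infty)$, while $s\mapsto\|u(s,\cdot)-f\|$ is bounded and tends to $0$ as $s\downarrow0$.

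Next I would carry out the Laplace transform computation. Writing $\widetilde g(\lambda)=\int_0^\infty e^{-\lambda t}g(t)\,dt$, Fubini and \eqref{e:1.1} give $\widetilde w(\lambda)=\phi(\lambda)/\lambda$, whence, since $s\mapsto\big(w*(\psi-\psi(0))\big)(s)$ vanishes at $0$,
\[
\widetilde{\partial^w_t\psi}(\lambda)=\lambda\widetilde w(\lambda)\big(\widetilde\psi(\lambda)-\psi(0)/\lambda\big)=\phi(\lambda)\widetilde\psi(\lambda)-\frac{\phi(\lambda)\psi(0)}{\lambda}.
\]
A standard computation using $\bP(E_t\le s)=\bP(S_s\ge t)$ and $\bE[e^{-\lambda S_s}]=e^{-s\phi(\lambda)}$ yields, for bounded measurable $g$,
\[
\int_0^\infty e^{-\lambda t}\,\bE[g(E_t)]\,dt=\frac{\phi(\lambda)}{\lambda}\int_0^\infty e^{-\phi(\lambda)s}g(s)\,ds.
\]
Applying this with $g(s)=T_sf(x)$ and recalling $\int_0^\infty e^{-\mu s}T_sf\,ds=(\mu-\sL)^{-1}f$ for $\mu>0$ (the resolvent of the bounded strongly continuous semigroup), I obtain $\widetilde u(\lambda,x)=\tfrac{\phi(\lambda)}{\lambda}(\phi(\lambda)-\sL)^{-1}f(x)$.

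From the resolvent identity $\sL(\phi(\lambda)-\sL)^{-1}f=\phi(\lambda)(\phi(\lambda)-\sL)^{-1}f-f$ and $\sL\widetilde u(\lambda)=\widetilde{\sL u}(\lambda)$ (closedness of $\sL$ again), this gives $\widetilde{\sL u}(\lambda)=\phi(\lambda)\widetilde u(\lambda)-\phi(\lambda)f/\lambda=\widetilde{\partial^w_t u}(\lambda)$, using $u(0,\cdot)=f$. Moreover $\tfrac1\lambda\widetilde{\sL u}(\lambda)=\widetilde w(\lambda)\big(\widetilde u(\lambda)-f/\lambda\big)=\widetilde{I^w u}(\lambda)$, where $I^w u$ denotes the function $t\mapsto I^w_t(u(\cdot,x))$; since $t\mapsto\sL u(t,x)$ is continuous and bounded, the left-hand side is the Laplace transform of $t\mapsto\int_0^t\sL u(s,x)\,ds$, so injectivity of the Laplace transform on bounded continuous $\bB$-valued functions forces $I^w_t(u(\cdot,x))=\int_0^t\sL u(s,x)\,ds$, which is $C^1$ in $t$ with derivative $\sL u(t,x)$; this is exactly (ii). For the converse, if $u$ satisfies (i)--(ii), then $I^w_t(u)=\int_0^t\sL u(s)\,ds$, and transforming this while commuting $\sL$ past $\int_0^\infty e^{-\lambda t}(\cdot)\,dt$ gives $(\phi(\lambda)-\sL)\widetilde u(\lambda)=\phi(\lambda)f/\lambda$; since $\phi(\lambda)>0$ lies in the resolvent set, $\widetilde u(\lambda)=\tfrac{\phi(\lambda)}{\lambda}(\phi(\lambda)-\sL)^{-1}f$ for every $\lambda>0$, which by injectivity of the Laplace transform identifies $u$ with $\bE[T_{E_t}f]$.

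I expect the main obstacle to be the careful justification of the interchanges of the closed (unbounded) operator $\sL$, the Bochner integral over $\Omega$, and the time integral, all in the general Banach space $\bB$---there is no Hilbert-space structure to fall back on---together with the probabilistic change-of-variables identity for $\int_0^\infty e^{-\lambda t}\bE[g(E_t)]\,dt$. One must also handle the precise meaning of $E_t$ when the subordinator has no drift and infinite L\'evy measure (strictly increasing, pure-jump paths), and the possible blow-up of $w$ at $0$, so that every convolution appearing is treated as an absolutely convergent improper integral.
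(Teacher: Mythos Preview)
The paper does not prove Theorem~\ref{T:2.1}; it is quoted verbatim from \cite[Theorem~2.1]{Chen}. However, the proof of the closely related Theorem~\ref{T:2.4} in this paper is explicitly ``motivated by that of \cite[Theorem~2.1]{Chen}'', so one can infer the shape of the original argument from it.

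Your Laplace-transform proof is correct. The regularity claims in (i) are handled as you describe (closedness of $\sL$ to push it through the Bochner expectation, local integrability of $w$ from $\int_0^\infty(1\wedge s)\,\nu(ds)<\infty$). The key identity $\widetilde u(\lambda)=\tfrac{\phi(\lambda)}{\lambda}(\phi(\lambda)-\sL)^{-1}f$ follows from the computation $\int_0^\infty e^{-\lambda t}\bP(E_t>s)\,dt=\lambda^{-1}e^{-s\phi(\lambda)}$, and then matching $\widetilde{I^w u}(\lambda)=\widetilde w(\lambda)\bigl(\widetilde u(\lambda)-f/\lambda\bigr)$ with $\lambda^{-1}\widetilde{\sL u}(\lambda)$ gives $I^w_t(u)=\int_0^t\sL u(s)\,ds$ by injectivity of the vector-valued Laplace transform on continuous functions of at most polynomial growth (note $\|I^w_t(u)\|$ grows at most linearly in $t$, not boundedly, but this is enough). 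Differentiating yields (ii), and the converse runs the same computation backwards.

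The difference from the approach in \cite{Chen} (as reflected in the proof of Theorem~\ref{T:2.4} here) is that Chen establishes the \emph{existence} part by a direct time-domain calculation: using the explicit identities of Lemmas~\ref{L:solu-1} and~\ref{L:solu-2} for $\bP(S_s\ge t)$ and $\bE[G(t-S_s)\mathbf{1}_{\{S_s\le t\}}]$, one computes $I^w_t(u)$ via integration by parts in $s$ and shows it equals $\int_0^t\sL u(s)\,ds$ without ever transforming. Only the \emph{uniqueness} part is done by Laplace transform in that approach. Your route is more streamlined---one mechanism handles both directions---at the cost of the usual care in justifying the transform identities (exchange of $\sL$ with Bochner integrals, growth bounds for the transforms to exist, and the probabilistic change-of-variables for $E_t$), all of which you have correctly flagged. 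The time-domain argument, by contrast, yields the pointwise-in-$t$ identity $I^w_t(u)=\int_0^t\sL u(s)\,ds$ directly from the structure of the subordinator, which is perhaps more transparent probabilistically and avoids any appeal to uniqueness of Laplace transforms in the existence half.
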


\medskip

When $S=\{S_t: t\geq 0\}$ is a  $\beta$-stable subordinator with the
Laplace exponent $\phi (\lambda) = \lambda^\beta$ for $0<\beta<1$,
 its L\'evy measure  $\nu (dx)= \frac{\beta}{\Gamma (1-\beta)} x^{-(1+\beta)} \,dx$ and so
 $ w(x)= \mu (x, \infty)=   \frac{x^{-\beta}}{\Gamma (1-\beta)}$.
Hence   Theorem \ref{T:2.1} recovers the main result of \cite{BM} and \cite[Theorem 5.1]{MS0}
  for  parabolic equations with Caputo time derivative of order $\beta$.
For other related results, see
\cite[Remark 2.1]{Chen}.

\medskip

In this section, we show
that, when $X$ is $\mu$-symmetric for some $\sigma$-finite measure
$\mu$ with full support on $M$,  the initial condition $f\in \sD (\sL)$
in Theorem \ref{T:2.1} with $\bB:=L^2(M; \mu)$ can
 be weakened to $f\in L^2 (M; \mu)$
if we formulate the solution to the time fractional equation \eqref{e:1.3} in weak sense.
In this case, $(\sL, \sD(\sL))$ is the infinitesimal generator of the transition semigroup $\{T_t; t\geq 0\}$
in $L^2(M; \mu)$, which is a strong continuous contraction semigroup.
Denote by $(\sE, \sF)$ the Dirichlet form associated with $X$ in $L^2(M; \mu)$, which is known to be
quasi-regular; see \cite{CF} for example.

\medskip

First we recall the following result from
  \cite[Lemma 2.1 and Corollary 2.2 (i)]{Chen}.

\begin{lemma}\label{L:solu-1}
There is a Borel set $\sN \subset (0, \infty)$ having zero Lebesgue measure so that
  $$
 \bP (  S_s\geq  t)=\int_0^s \bE \left[ w(t-  S_r) {\bf1}_{\{t\geq   S_r \}}\right]  dr
 \quad \hbox{for every } s>0 \hbox{ and } t\in (0, \infty) \setminus \sN
 $$
and
$$ \int_0^\infty \bE \left[ w(t-  S_r) {\bf1}_{\{t\geq  S_r \}}\right]  dr =1
\quad \hbox{for every } t\in (0, \infty)\setminus \sN .
$$
 \end{lemma}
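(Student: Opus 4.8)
The plan is to identify the two sides of the first identity through their Laplace transforms in the variable $t$, and then to upgrade the resulting ``for a.e.\ $t$'' statement to one valid on a single exceptional null set independent of $s$. Fix $s>0$ and $\lambda>0$. By Tonelli's theorem,
\[
\int_0^\infty e^{-\lambda t}\,\bP(S_s\ge t)\,dt=\bE\int_0^{S_s}e^{-\lambda t}\,dt=\frac{1-\bE e^{-\lambda S_s}}{\lambda}=\frac{1-e^{-s\phi(\lambda)}}{\lambda}.
\]
For the right-hand side, first note that since $w(u)=\nu(u,\infty)$, by Tonelli and \eqref{e:1.1},
\[
\int_0^\infty e^{-\lambda u}w(u)\,du=\int_{(0,\infty)}\frac{1-e^{-\lambda v}}{\lambda}\,\nu(dv)=\frac{\phi(\lambda)}{\lambda},
\]
which is finite because $\int_0^1 w(u)\,du=\int_{(0,\infty)}(1\wedge v)\,\nu(dv)<\infty$. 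Hence, again by Tonelli and the substitution $u=t-S_r$,
\[
\int_0^\infty e^{-\lambda t}\int_0^s\bE\bigl[w(t-S_r){\bf 1}_{\{t\ge S_r\}}\bigr]\,dr\,dt
=\Bigl(\int_0^\infty e^{-\lambda u}w(u)\,du\Bigr)\int_0^s \bE e^{-\lambda S_r}\,dr=\frac{\phi(\lambda)}{\lambda}\cdot\frac{1-e^{-s\phi(\lambda)}}{\phi(\lambda)}.
\]
So the two nonnegative measurable functions of $t$ have the same finite Laplace transform for every $\lambda>0$, and by uniqueness of the Laplace transform they coincide for Lebesgue-a.e.\ $t>0$; denote the exceptional null set by $\sN_s$.

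Next I would pass to a common null set. Put $\sN:=\bigcup_{s\in\mathbb Q\cap(0,\infty)}\sN_s$, which is still Lebesgue null, and fix $t\notin\sN$, so that the first identity holds for every rational $s>0$. Writing $h(s,t):=\int_0^s\bE[w(t-S_r){\bf 1}_{\{t\ge S_r\}}]\,dr$ and $g(s,t):=\bP(S_s\ge t)$, the integrand $r\mapsto\bE[w(t-S_r){\bf 1}_{\{t\ge S_r\}}]$ is nonnegative and, by the identity along rationals together with monotonicity of $h(\cdot,t)$, has total integral at most $1$ on $(0,\infty)$; hence $s\mapsto h(s,t)$ is continuous on $(0,\infty)$. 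On the other hand $s\mapsto g(s,t)$ is non-decreasing, because $s\mapsto S_s$ is non-decreasing. For arbitrary $s>0$, choosing rationals $s'\uparrow s$ and $s''\downarrow s$ and using $h(s',t)=g(s',t)\le g(s,t)\le g(s'',t)=h(s'',t)$ together with the continuity of $h(\cdot,t)$ forces $g(s,t)=h(s,t)$. This yields the first identity for every $s>0$ and every $t\notin\sN$.

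Finally, for the second identity I would let $s\to\infty$ in the first one: the right-hand side increases to $\int_0^\infty\bE[w(t-S_r){\bf 1}_{\{t\ge S_r\}}]\,dr$ by monotone convergence, while $g(s,t)=\bP(S_s\ge t)\uparrow 1$, since $S_s\to\infty$ almost surely (as $\bE e^{-S_s}=e^{-s\phi(1)}\to0$ and $s\mapsto S_s$ is non-decreasing). The only genuinely delicate point in this scheme is the passage from the per-$s$ almost-everywhere equality to one holding on a common null set $\sN$; the rest is a routine Laplace-transform computation justified by Tonelli's theorem and the integrability assumption $\int_0^\infty(1\wedge s)\,\nu(ds)<\infty$.
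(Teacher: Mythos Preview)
Your argument is correct. The paper does not actually prove this lemma; it merely quotes the statement from \cite[Lemma 2.1 and Corollary 2.2 (i)]{Chen}. Your Laplace-transform approach supplies a clean, self-contained proof: the computation
\[
\int_0^\infty e^{-\lambda t}\,\bP(S_s\ge t)\,dt=\frac{1-e^{-s\phi(\lambda)}}{\lambda}
=\frac{\phi(\lambda)}{\lambda}\int_0^s e^{-r\phi(\lambda)}\,dr
=\int_0^\infty e^{-\lambda t}\int_0^s\bE\bigl[w(t-S_r)\mathbf{1}_{\{t\ge S_r\}}\bigr]\,dr\,dt
\]
is justified by Tonelli and the identity $\int_0^\infty e^{-\lambda u}w(u)\,du=\phi(\lambda)/\lambda$ (which the paper also records as \cite[(2.3)]{Chen}). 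The passage from a per-$s$ null set to a single $\sN$ via rationals is handled correctly: the key observation that $h(\cdot,t)$ is continuous follows because the a.e.\ identity on rationals bounds the nonnegative integrand's total mass by $1$, and the sandwich $h(s',t)=g(s',t)\le g(s,t)\le g(s'',t)=h(s'',t)$ then pins down $g(s,t)$. The limit $s\to\infty$ for the second identity is also fine, since $S_s\uparrow\infty$ a.s.\ (your argument via $\bE e^{-S_s}=e^{-s\phi(1)}\to 0$ plus monotonicity is valid). Compared to simply citing the result, your proof has the advantage of being elementary and not requiring the reader to consult \cite{Chen}.
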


Define $G(0)=0$ and $G(x)=\int_0^x w(t)\,dt$ for all $x>0$. We also need the following lemma, which is  \cite[(2.5) and
Corollary 2.1 (ii)]{Chen}.

\begin{lemma}\label{L:solu-2} For every $t,s>0$,
$$\int_0^t w(t-r)\bP (S_s>r)\,dr=G(t)-\bE (G(t-S_s){\bf 1}_{\{t\ge S_s\}})$$  and
$$\int_0^\infty \bE (G(t-S_r){\bf 1}_{\{S_r\le t\}})\,dr=t.$$
\end{lemma}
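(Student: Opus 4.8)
The plan is to obtain both identities by elementary manipulations — one Tonelli interchange and one change of variables in the integral defining $G$ — combined with the already available Lemma \ref{L:solu-1}; in fact these are exactly \cite[(2.5) and Corollary 2.1(ii)]{Chen}. As a preliminary I would record that, by Tonelli, $G(t)=\int_0^t\nu(u,\infty)\,du=\int_{(0,\infty)}(s\wedge t)\,\nu(ds)\le(1+t)\int_{(0,\infty)}(1\wedge s)\,\nu(ds)<\infty$ for every $t>0$, so that no $\infty-\infty$ ambiguity arises below.

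For the first identity, I would write $\bP(S_s>r)=\bE[{\bf 1}_{\{S_s>r\}}]$ and move the expectation outside the $dr$-integral (valid by Tonelli, since the integrand is nonnegative and jointly measurable), obtaining $\bE\big[\int_0^{t\wedge S_s}w(t-r)\,dr\big]$. The substitution $u=t-r$ turns the inner integral into $\int_{(t-S_s)\vee 0}^{t}w(u)\,du=G(t)-G\big((t-S_s)\vee0\big)=G(t)-G(t-S_s){\bf 1}_{\{t\ge S_s\}}$, using $G(0)=0$; taking expectations and invoking $G(t)<\infty$ then gives the first identity.

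For the second identity, I would first observe that for each fixed $r$ the substitution $v=u+S_r$ in $G(t-S_r)=\int_0^{t-S_r}w(u)\,du$ yields $G(t-S_r){\bf 1}_{\{S_r\le t\}}=\int_0^t w(v-S_r){\bf 1}_{\{v\ge S_r\}}\,dv$, where both sides also vanish on $\{S_r>t\}$. Taking expectations, integrating over $r\in(0,\infty)$, and using Tonelli to interchange $\int_0^\infty dr$ with $\int_0^t dv$ reduces the claim to $\int_0^\infty\bE\big[w(v-S_r){\bf 1}_{\{v\ge S_r\}}\big]\,dr=1$ for Lebesgue-a.e.\ $v\in(0,t)$, which is precisely the second assertion of Lemma \ref{L:solu-1}; the exceptional null set $\sN$ there is harmless because it is integrated out in $v$. (An alternative route, via Laplace transforms in the variable $t$ — using $\int_0^\infty e^{-\lambda u}w(u)\,du=\phi(\lambda)/\lambda$ together with the Laplace transform of $t\mapsto\bE[E_t]$ — is also available but less direct.)

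There is no genuine obstacle here; the whole argument is bookkeeping. The only points that require a little care are checking measurability and integrability so that the Tonelli interchanges are legitimate and the finiteness of $G(t)$ makes the subtraction in the first identity meaningful, and noting that the distinction between $\bP(S_s>r)$ and $\bP(S_s\ge r)$, as well as the removal of the Lebesgue-null set $\sN$ supplied by Lemma \ref{L:solu-1}, are both immaterial because everything occurs inside $dr$- or $dv$-integrals.
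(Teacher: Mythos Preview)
Your proof is correct. The paper does not give its own argument for this lemma but simply cites \cite[(2.5) and Corollary 2.1(ii)]{Chen}, and your Tonelli/change-of-variables computation together with the appeal to Lemma~\ref{L:solu-1} is exactly the natural direct verification of those two identities.
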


\medskip

 Now we can present the main result of this section on
 the existence and the uniqueness of  weak solutions to equation \eqref{e:1.3}.

\medskip

\begin{theorem}\label{T:2.4}
Suppose $M$ is a locally compact Hausdorff space and
 $X$ is a $\mu$-symmetric  strong Markov process for some $\sigma$-finite measure $\mu$ with full support on $M$.
  For any $f\in L^2(M; \mu)$,   $u(t,x):=\bE  \left[ T_{E_t} f(x)\right] $ is a weak solution to
\begin{equation}\label{e:2.1}
\partial_t^w u(t,x)=\sL u(t,x) \quad \hbox{with } u(0,x)=f(x)
\end{equation}
in the following sense:
\begin{itemize}
\item[\rm (i)]  $t\mapsto u(t, x)$ is continuous in $L^2(M; \mu)$.  Consequently, for every $t>0$,
$$
I^w_t (u (\cdot, x))  :=\int_0^t w(t-s)
(u(s, x) -f(x))\, ds
$$
 is absolutely convergent in   $L^2(M; \mu)$.

\item[\rm (ii)] For  every $g\in D(\sL)$ and $t>0$,
\begin{equation}\label{e:2.2}
\frac{d}{dt} \int_M g(x) I^w_t(u (\cdot, x))\,\mu(dx)= \int_M u(t,x) \sL g(x) \,\mu(dx)   .
\end{equation}
\end{itemize}

 Conversely, if $u(t, x)$ is a weak solution to \eqref{e:2.1} in the sense of  {\rm (i)}  and {\rm (ii)}  above with $f\in  L^2(M; \mu)$,
 then $u(t, x)= \bE \left[ T_{E_t} f (x) \right]$ $\mu$-a.e.\ on $M$
for every $t\geq 0$.

\end{theorem}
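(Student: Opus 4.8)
The plan is to split the statement into the existence part (the representation $u(t,x)=\bE[T_{E_t}f(x)]$ gives a weak solution) and the uniqueness part, and to reduce both to the approximation $f_n\in \sD(\sL)$ together with the already-cited Theorem \ref{T:2.1} and Lemmas \ref{L:solu-1}--\ref{L:solu-2}.

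For existence, I would first verify item (i). Since $\{T_t:t\ge0\}$ is a strongly continuous contraction semigroup on $L^2(M;\mu)$ and $t\mapsto E_t$ is a.s.\ continuous, $\|T_{E_t}f\|_{L^2}\le \|f\|_{L^2}$ and dominated convergence (in the Bochner sense) gives continuity of $t\mapsto u(t,\cdot)$ in $L^2(M;\mu)$; then $\int_0^t w(t-s)|u(s,x)-f(x)|\,ds$ is $L^2$-convergent because $\int_0^t w(s)\,ds=G(t)<\infty$ for each fixed $t>0$ (here one uses $\int_0^\infty(1\wedge s)\,\nu(ds)<\infty$, which forces $w$ to be locally integrable near $0$). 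For item (ii), the key computation is to insert the representation $\bP(S_r\ge t)$ via \eqref{e:sou}--\eqref{e:1.5} and Fubini, use $g\in \sD(\sL)$ and self-adjointness to move $\sL$ onto $T_r g$, i.e.\ $\int_M u(t,x)\sL g(x)\,\mu(dx)=\int_0^\infty \langle T_r f,\sL g\rangle\,d_r\bP(S_r\ge t)=\int_0^\infty \langle f, T_r\sL g\rangle\,d_r\bP(S_r\ge t)=\int_0^\infty \langle f,\tfrac{d}{dr}T_r g\rangle\,d_r\bP(S_r\ge t)$. Then one identifies $\int_M g(x)I_t^w(u(\cdot,x))\,\mu(dx)$ using Lemma \ref{L:solu-2}: writing $\langle g, I_t^w(u(\cdot,\cdot))\rangle=\int_0^t w(t-s)\big(\langle g, u(s,\cdot)\rangle-\langle g,f\rangle\big)\,ds$ and substituting $\langle g,u(s,\cdot)\rangle=\int_0^\infty \langle g,T_r f\rangle\,d_r\bP(S_r\ge s)$, the $s$-integral against $w(t-s)$ is exactly of the form controlled by the first identity in Lemma \ref{L:solu-2}. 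Differentiating in $t$ and using the second identity in Lemma \ref{L:solu-2} (which provides the normalization that makes the boundary/initial terms cancel) yields \eqref{e:2.2}. The cleanest route is actually to approximate: pick $f_n\in \sD(\sL)$ with $f_n\to f$ in $L^2$, let $u_n(t,x)=\bE[T_{E_t}f_n(x)]$, apply Theorem \ref{T:2.1}(i)--(ii) to $u_n$ (which gives the strong equation, hence a fortiori the weak identity \eqref{e:2.2} after pairing with $g$), and pass to the limit; contractivity gives $\|u_n(t,\cdot)-u(t,\cdot)\|_{L^2}\le\|f_n-f\|_{L^2}$ uniformly in $t$, and the $t$-derivative in \eqref{e:2.2} survives the limit once one checks that $t\mapsto \langle g, I_t^w(u_n(\cdot,\cdot))\rangle$ converges in, say, $C^1$ on compact $t$-intervals — which follows because its derivative is $\langle u_n(t,\cdot),\sL g\rangle\to\langle u(t,\cdot),\sL g\rangle$ uniformly on compacts.

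For uniqueness, suppose $u$ is any weak solution in the sense of (i)--(ii) with the same $f$; I would show its Laplace transform in $t$ is determined. Set $\widetilde u(\lambda,x)=\int_0^\infty e^{-\lambda t}u(t,x)\,dt$, which exists in $L^2$ for $\lambda>0$ because of the contraction bound giving $\|u(t,\cdot)\|_{L^2}\le C$ (this bound itself must first be extracted from (i)--(ii); for the representation solution it is immediate, and for a general weak solution one argues that (ii) forces the same a priori growth). Taking Laplace transform of the identity \eqref{e:2.2}, using that the Laplace transform of $I_t^w(u(\cdot,x))$ is $\tfrac{w^*(\lambda)}{\lambda}\big(\widetilde u(\lambda,x)-\tfrac{f(x)}{\lambda}\big)$ with $w^*(\lambda)=\int_0^\infty e^{-\lambda s}w(s)\,ds=\phi(\lambda)/\lambda$ (the standard identity relating $\phi$, $\nu$, and $w$), one gets, for every $g\in\sD(\sL)$,
\begin{equation*}
\frac{\phi(\lambda)}{\lambda}\Big\langle g,\ \lambda\widetilde u(\lambda,\cdot)-f\Big\rangle=\big\langle \widetilde u(\lambda,\cdot),\ \sL g\big\rangle=\big\langle \sL \widetilde u(\lambda,\cdot),\ g\big\rangle,
\end{equation*}
where the last equality uses self-adjointness of $\sL$ together with $\widetilde u(\lambda,\cdot)\in\sD(\sL)$ (which one verifies from the displayed equation: the right side is a bounded linear functional of $g$, hence $\widetilde u(\lambda,\cdot)\in\sD(\sL^*)=\sD(\sL)$). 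This rearranges to $(\phi(\lambda)-\sL)\widetilde u(\lambda,\cdot)=\phi(\lambda)\lambda^{-1}f$, and since $\phi(\lambda)>0$ lies in the resolvent set of the non-positive self-adjoint operator $\sL$, $\widetilde u(\lambda,\cdot)=\phi(\lambda)\lambda^{-1}(\phi(\lambda)-\sL)^{-1}f$ is uniquely determined. As this holds for all $\lambda>0$ and $t\mapsto u(t,\cdot)$ is continuous in $L^2$, injectivity of the Laplace transform gives $u(t,\cdot)=\bE[T_{E_t}f(\cdot)]$ for all $t\ge 0$, once one checks the representation solution has the same Laplace transform — which is a direct subordination computation: $\int_0^\infty e^{-\lambda t}\bP(S_r\ge t)\,dt$ and Fubini against the spectral resolution of $\sL$ reproduce $\phi(\lambda)\lambda^{-1}(\phi(\lambda)-\sL)^{-1}$.

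I expect the main obstacle to be the rigorous handling of the $t$-derivative in \eqref{e:2.2} and its interplay with the Laplace transform for a \emph{general} weak solution: one must be careful that (i)--(ii) alone guarantee enough regularity (local integrability of $t\mapsto\langle g,I_t^w(u(\cdot,\cdot))\rangle$ and that it is genuinely the antiderivative of $t\mapsto\langle u(t,\cdot),\sL g\rangle$) to justify taking Laplace transforms and integrating by parts, and that the initial condition $u(0,x)=f(x)$ is correctly encoded — this is exactly where the two normalization identities in Lemmas \ref{L:solu-1} and \ref{L:solu-2} do the work, so the bookkeeping of boundary terms when Laplace-transforming $I_t^w$ is the delicate point. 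The measure-theoretic subtlety that several identities (e.g.\ Lemma \ref{L:solu-1}) hold only for $t\notin\sN$ is harmless here since everything is eventually integrated in $t$ against Lebesgue measure or its continuity in $L^2$ is used.
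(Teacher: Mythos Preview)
Your proposal is essentially correct and close in spirit to the paper's proof, with one genuine variation worth noting.

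For existence, you sketch two routes. The first (direct manipulation via Lemmas~\ref{L:solu-1}--\ref{L:solu-2}) is exactly what the paper does: it computes $\langle g, I_t^w(u)\rangle$ by inserting $u(s,\cdot)=\int_0^\infty T_r f\,d_r\bP(S_r\ge s)$, applies Lemma~\ref{L:solu-2} to collapse the $s$-integral, separately computes $\int_0^t\langle u(s,\cdot),\sL g\rangle\,ds$ via Lemma~\ref{L:solu-1}, and shows both equal $\int_0^\infty \bE[G(t-S_r)\mathbf{1}_{\{S_r\le t\}}]\langle T_r f,\sL g\rangle\,dr$. Your second route---approximate $f$ by $f_n\in\sD(\sL)$, invoke Theorem~\ref{T:2.1} for $u_n$, pair with $g$, and pass to the limit using the uniform contraction bound---is \emph{not} in the paper, but it is valid and arguably cleaner: it avoids redoing the integral identities and isolates the role of self-adjointness (needed only to move $\sL$ from $u_n$ onto $g$ before the limit). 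The paper's direct route, on the other hand, never leaves $L^2$ and makes no appeal to Theorem~\ref{T:2.1}.

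For uniqueness, your argument and the paper's are the same Laplace-transform idea, organized slightly differently. The paper subtracts the representation solution first, sets $v=u-\bE[T_{E_t}f]$, integrates \eqref{e:2.2} to get $\langle g,\int_0^t w(t-r)v(r,\cdot)\,dr\rangle=\langle\int_0^t v(s,\cdot)\,ds,\sL g\rangle$, takes the Laplace transform, and then---rather than arguing that the transform lies in $\sD(\sL)$---simply tests against $g=G_{\phi(\lambda)}h$ for arbitrary $h\in L^2$ to conclude $V(\lambda,\cdot)=0$. This stays entirely on the weak side and sidesteps your step ``$\widetilde u(\lambda,\cdot)\in\sD(\sL^*)=\sD(\sL)$''; it also avoids having to verify separately that the representation solution has the claimed Laplace transform. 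Your flagged concern about an a priori growth bound on $\|u(t,\cdot)\|_2$ for a general weak solution (so that the Laplace transform converges) is legitimate; the paper does not address it explicitly either.
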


\begin{proof} The proof is motivated by that of
\cite[Theorem 2.1]{Chen}.

\medskip

(1) {\bf(Existence)} Since $\{T_t: t\geq 0\}$ is a strongly continuous contraction semigroup in $L^2(M;\mu)$ and
$t\mapsto  E_t$ is continuous a.s., we have by the bounded convergence theorem that
$t\mapsto u(t, x)=\bE \left[ T_{E_t} f (x) \right]$ is continuous in $L^2(M; \mu)$
and $\| u(t, \cdot)\|_2 \leq \| f\|_2$.
Since
\begin{equation}\label{e:2.3a}
\int_0^t w(s)\,ds  =   \int_0^\infty (z\wedge t)\, \nu (dz) <\infty \quad \hbox{for every } t>0
\end{equation}
by \cite[(2.2)]{Chen},
$I^w_t ( u(\cdot, x))$ is absolutely convergent in   $L^2(M; \mu)$ for every $t>0$ with
$\| I^w_t ( u(\cdot, x)) \|_2 \leq 2\| f\|_2 \int_0^t w(s) \,ds < \infty$.

In the following, denote by $\langle \cdot, \cdot \rangle$ the inner product in $L^2(M;\mu)$.
By \eqref{e:sou}, the integration by parts formula, Lemma \ref{L:solu-2} and the self-adjointness of  $\sL$ in $L^2(M; \mu)$, we have for every $t>0$,
\begin{align*}
&\int_M g(x) I^w_t (u (\cdot, x))\,\mu(dx)\\
&=\int_M g(x) \int_0^t w(t-r) (u(r, x) -u(0, x))\, dr\,\mu(dx)\\
&=\int_0^t w(t-r)\int_0^\infty \left(\langle T_s f,g\rangle-\langle f, g\rangle\right) \,d_s\bP (S_s\ge r)\,dr\\
&=\int_0^\infty \left(\langle T_s f,g\rangle-\langle f, g\rangle\right)\,d_s\left(\int_0^t w(t-r)\bP (S_s\ge r)\,dr\right)\\
&=-\int_0^\infty \left(\langle T_s f,g\rangle-\langle f, g\rangle\right)\,d_s \bE (G(t-S_s){\bf 1}_{\{S_s\le t\}})\\
&=\int_0^\infty \bE (G(t-S_s){\bf 1}_{\{S_s\le t\}}) \langle \sL T_sf, g\rangle\,ds\\
&=\int_0^\infty \bE (G(t-S_s){\bf 1}_{\{S_s\le t\}}) \langle T_sf, \sL g\rangle\,ds.
\end{align*}

On the other hand, according to \eqref{e:sou}, the integration by parts formula and Lemma \ref{L:solu-1}, we find that  for almost all $t>0$,
\begin{align*}
\int_0^t\int_M u(s,x) \sL g(x) \,\mu(dx)\,ds
&= \int_0^t \big\langle \sL g, \int_0^\infty T_u f\, d_u\bP (S_u\ge s)\big\rangle \,ds\\
&=\int_0^\infty \langle \sL g, T_u f\rangle \, d_u\left(\int_0^t \bP (S_u\ge s)\,ds\right)\\
&= \int_0^\infty \langle \sL g, T_u f\rangle \int_0^t \bE (w(s-S_u){\bf 1}_{\{S_u\le s\}})\,ds \, du\\
&= \int_0^\infty \langle \sL g, T_u f\rangle \bE (G(t-S_u){\bf 1}_{\{S_u\le t\}})\,du .
\end{align*}
Thus we conclude that for every $t\geq 0$,
$$
\int_M g(x) I^w_t (u (\cdot, x))\,\mu(dx) = \int_0^t\int_M u(s,x) \sL g(x) \,\mu(dx)\,ds .
$$
This establishes  \eqref{e:2.2} as $s\mapsto u(s, x)$ is continuous in $L^2(M; \mu)$.

\medskip

(2) {\bf(Uniqueness)} Suppose that $u(t, x)$ is a weak solution to \eqref{e:2.1} in the sense of (i) and (ii) with $f\in L^2(M; \mu)$.
Then $v(t, x):= u(t, x)- \bE \left[ T_{E_t} f (x) \right]$ is a weak solution to \eqref{e:2.1} with $v(0, x)=0$.
Note that by \eqref{e:2.3a},
$$  \lim_{t\to 0} \| I^w_t (v(\cdot, x)) \|_2
\leq 2\max_{s\in [0, 1]}  \| v(s,\cdot)\|_2\cdot   \lim_{t\to 0} \int_0^t w(s)\, ds  =0.
$$
Hence we have for every $t>0$ and $g\in \sD (\sL)$,
\begin{equation}\label{e:2.3}
 \int_M g(x) \left( \int_0^t w(t-r)  v(r, x) \,dr \right) \mu (dx)  = \int_M \left( \int_0^t v(s, x)  \,ds \right)\sL g( x)   \,\mu (dx) .
\end{equation}
Let $V(\lambda, x):=\int_0^\infty e^{-\lambda t} v(t, x)\, dt$, $\lambda >0$,
 be the Laplace transform of $t\mapsto v(t, x)$.
 Taking the Laplace transform in $t$ on both sides of \eqref{e:2.3} yields that for every $\lambda >0$,
$$
\int_M g(x) V(\lambda, x) \left(   \int_0^\infty e^{-\lambda s} w(s)\, ds \right) \,\mu(dx)
= \frac{1}{\lambda} \int_M V(\lambda, x) \sL g(x) \,\mu (dx).
$$
Note that the Laplace transform of $w(t)$ is $\phi (\lambda)/\lambda$; see \cite[(2.3)]{Chen}.
Hence we have from the above display that for every $\lambda >0$,
 $$
 \int_M V(\lambda, x)  \left( \phi (\lambda ) -\sL \right) g(x) \,\mu (dx) =0.
 $$
 Denote by $\{G_\alpha: \alpha>0\}$ be the resolvent of the regular Dirichlet for $(\sE, \sF)$.
 For each fixed $\lambda >0$ and $h\in L^2(M; \mu)$, take $g:=G_{\phi (\lambda)} h$, which is
 in $\sD (\sL)$. Since $ ( \phi (\lambda ) -\sL ) g  =h$, we deduce
 that $\int_M V(\lambda , x ) h(x) \,\mu (dx)=0$ for every $h\in L^2(M; \mu)$.
 Therefore $V(\lambda, x)=0$ $\mu$-a.e. for every $\lambda >0$. By the uniqueness of the Laplace transform
 and the fact that $t\mapsto v(t, x)$ is continuous in $L^2(M; \mu)$,
 it follows that $v(t, x)=0$ a.e. for every $t>0$. In other words, $u(t, x)= \bE \left[ T_{E_t} f(x) \right]$
 $\mu$-a.e. on $M$ for every $t>0$.
\end{proof}

\section{Preliminary estimates}
\label{S:3}

In this section, we give some preliminary estimates needed for the proofs of  Theorems \ref{theorem:mainjump} and \ref{theorem:maindiff}.

\subsection{Bernstein functions and the weak scaling property}\label{section31}

A non-negative $C^\infty$ function $\phi$ on $(0,\infty)$ is called a Bernstein function if
$(-1)^n \phi^{(n)} (\lambda) \le 0$
for every $n\in \mathbb N$ and $\lambda>0$.
According to \cite[(2.3)]{Ante} and \cite[Lemma 1.3]{KM},
the following properties hold for
the Bernstein function $\phi$
satisfying condition  \eqref{e:phi}.

\begin{lemma}
\label{l:AKM} Let $\phi$ be a Bernstein function such that \eqref{e:phi} is satisfied, i.e., there are constants $0<\beta_1\le \beta_2<1$ such that for any $\lambda>0$ and $\kappa\ge 1$,
$$ c_1 \kappa^{\beta_1}\le \frac{\phi(\kappa \lambda)}{\phi(\lambda)}\le c_2 \kappa^{\beta_2}.$$
Then there exists a constant $C_*\ge 1$ such that
the following holds
\begin{equation}\label{e:04}
 \lambda \, \phi'(\lambda) \le \phi(\lambda) \le C_* \lambda \, \phi'(\lambda),
\quad \lambda>0.
\end{equation}
In particular, there exist constants $c_i>0$ $(i=3,4,5,6)$ such that
\begin{equation}\label{e:031}c_3 \kappa^{1-\beta_2}\le \frac{\phi'(\lambda)}{\phi'(\kappa \lambda)}\le c_4 \kappa^{1-\beta_1},\quad \lambda>0, \kappa\ge 1,\end{equation}
and
\begin{equation}\label{e:03}c_5 \kappa^{1/(1-\beta_1)}\le \frac{(\phi')^{-1}(\lambda)}{(\phi')^{-1}(\kappa \lambda)}\le c_6 \kappa^{1/(1-\beta_2)},\quad \lambda>0, \kappa\ge 1,\end{equation} where $(\phi')^{-1}(\lambda):=\inf\{s>0: \phi'(s)\le \lambda\}$ for all $\lambda\ge0.$
\end{lemma}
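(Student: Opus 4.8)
The plan is to establish \eqref{e:04} first, since \eqref{e:031} and \eqref{e:03} follow from it by elementary manipulations together with the scaling hypothesis \eqref{e:phi}. For the lower bound $\lambda\phi'(\lambda)\le\phi(\lambda)$, I would use the L\'evy--Khintchine representation \eqref{e:1.1}: differentiating under the integral sign gives $\phi'(\lambda)=\int_0^\infty s e^{-\lambda s}\,\nu(ds)$, and since $1-e^{-\lambda s}\ge \lambda s e^{-\lambda s}$ for $s,\lambda>0$ (because $e^{u}-1\ge u$ applied to $u=\lambda s$), integrating against $\nu$ yields $\phi(\lambda)\ge\lambda\phi'(\lambda)$ directly. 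This half needs no scaling assumption; it is the concavity of $\phi$ in disguise.

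The upper bound $\phi(\lambda)\le C_*\lambda\phi'(\lambda)$ is where the scaling hypothesis enters and is the main obstacle. The cleanest route is the following integral comparison. Since $\phi$ is a Bernstein function, $\phi'$ is non-increasing (completely monotone, in fact), so for $\kappa\ge 1$,
\[
\phi(\kappa\lambda)-\phi(\lambda)=\int_\lambda^{\kappa\lambda}\phi'(u)\,du\le (\kappa-1)\lambda\,\phi'(\lambda).
\]
On the other hand, \eqref{e:phi} gives $\phi(\kappa\lambda)\ge c_1\kappa^{\beta_1}\phi(\lambda)$, hence
\[
(c_1\kappa^{\beta_1}-1)\phi(\lambda)\le\phi(\kappa\lambda)-\phi(\lambda)\le(\kappa-1)\lambda\,\phi'(\lambda).
\]
Now choose $\kappa=\kappa_0$ large enough (depending only on $c_1,\beta_1$) that $c_1\kappa_0^{\beta_1}-1>0$; then
\[
\phi(\lambda)\le\frac{\kappa_0-1}{c_1\kappa_0^{\beta_1}-1}\,\lambda\,\phi'(\lambda),
\]
which is \eqref{e:04} with $C_*=(\kappa_0-1)/(c_1\kappa_0^{\beta_1}-1)$. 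One should double-check that such a $\kappa_0$ exists: since $\beta_1>0$, $c_1\kappa^{\beta_1}\to\infty$ as $\kappa\to\infty$, so this is automatic.

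For \eqref{e:031}, I would differentiate \eqref{e:phi}-type estimates or, more robustly, combine \eqref{e:04} with \eqref{e:phi}: writing $\phi'(\lambda)\simeq\phi(\lambda)/\lambda$ from \eqref{e:04}, the ratio $\phi'(\lambda)/\phi'(\kappa\lambda)$ is comparable to $\bigl(\phi(\lambda)/\lambda\bigr)\big/\bigl(\phi(\kappa\lambda)/(\kappa\lambda)\bigr)=\kappa\,\phi(\lambda)/\phi(\kappa\lambda)$, and \eqref{e:phi} bounds this between $c\,\kappa^{1-\beta_2}$ and $c'\,\kappa^{1-\beta_1}$, giving \eqref{e:031} with adjusted constants. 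Finally \eqref{e:03} is the standard fact that a function satisfying a two-sided power-type scaling has a generalized inverse satisfying the reciprocal scaling: if $g(\lambda):=\phi'(\lambda)$ satisfies $c_3\kappa^{1-\beta_2}\le g(\lambda)/g(\kappa\lambda)\le c_4\kappa^{1-\beta_1}$, then applying $(\phi')^{-1}$ and relabeling exchanges the roles of argument and value and inverts the exponents, turning $1-\beta_i$ into $1/(1-\beta_i)$; one has to be mildly careful because $\phi'$ is non-increasing rather than increasing, so $(\phi')^{-1}$ is defined via $\inf\{s>0:\phi'(s)\le\lambda\}$ as stated, and the inequalities flip accordingly, but this is routine. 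The only real subtlety throughout is tracking which direction of monotonicity ($\phi$ increasing, $\phi'$ decreasing) is in force at each step, and making sure the generalized-inverse argument handles the non-strict monotonicity of $\phi'$ correctly.
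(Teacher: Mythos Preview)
Your argument is correct. The paper itself gives no proof of this lemma---it simply attributes the inequalities to \cite[(2.3)]{Ante} and \cite[Lemma 1.3]{KM} in the sentence preceding the statement---so you are supplying details where the paper supplies only citations. The steps you outline (concavity of $\phi$, or equivalently $1-e^{-u}\ge ue^{-u}$, for $\lambda\phi'(\lambda)\le\phi(\lambda)$; the integral bound $\phi(\kappa\lambda)-\phi(\lambda)\le(\kappa-1)\lambda\phi'(\lambda)$ combined with the lower scaling for the reverse inequality; then $\phi'(\lambda)\simeq\phi(\lambda)/\lambda$ plus \eqref{e:phi} for \eqref{e:031}; then inversion of the scaling for \eqref{e:03}) are the standard ones and all go through. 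The only point worth making explicit in the final step is that \eqref{e:031} with $\beta_2<1$ forces $\phi'(0+)=\infty$ and $\phi'(\infty)=0$, so $(\phi')^{-1}$ is a genuine bijection of $(0,\infty)$ onto itself and the inversion of the two-sided power bounds is unambiguous.
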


A function $f :(0,\infty) \to \R$  is said to be  a completely monotone function if $f$ is smooth and
$(-1)^n f^{(n)} (\lambda)\ge 0$
for all
$n\in \mathbb N$ and $\lambda>0$. A Bernstein function is said to be a complete Bernstein function if its
L\'evy measure
has a completely monotone density  with respect to Lebesgue measure.
The next lemma is concerned with the weak scaling property, which is interesting of its own.

\begin{lemma}\label{p:dd1}
Suppose that $0<\alpha_1 \le \alpha_2 < \infty$ and that
 a family  of non-negative functions $\{\Phi(x, \cdot)\}_{x\in M}$
 satisfies the weak scaling property uniformly with $(\alpha_1,\alpha_2)$, i.e., there exist constants $c_1,c_2>0$   such that for any $x\in M$,
 \begin{equation}\label{e:llee}
c_1(R/r)^{\alpha_1}
\le\Phi(x,R)/\Phi(x,r) \le c_2(R/r)^{\alpha_2} , \quad 0<r \le R < \infty.
\end{equation}
Then for any $\alpha_3 >\alpha_2$, there is a
family  of complete Bernstein functions $\{\varphi(x, \cdot)\}_{x\in M}$
such that
$$\Phi(x,r) \simeq \frac{1}{\varphi(x,r^{-\alpha_3})},\quad r>0, x\in M.
$$
Consequently,
 $\{\varphi(x, \cdot)\}_{x\in M}$
enjoys the weak scaling property uniformly with
$(\alpha_1/\alpha_3,$ $\alpha_2/\alpha_3)$,
i.e., there are constants $c_3, c_4>0$ such that for all $x\in M$,
 \begin{align}
 \label{e:dd1}
c_3(R/r)^{\alpha_1/\alpha_3}
\le  \varphi(x,R)/ \varphi(x,r) \le c_4(R/r)^{\alpha_2/\alpha_3}  , \quad 0<r \le R < \infty
 \end{align}
\end{lemma}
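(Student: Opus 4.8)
The statement is that a family of functions satisfying uniform weak scaling with exponents in $(\alpha_1,\alpha_2)$ can be realized (up to constants) as reciprocals of complete Bernstein functions evaluated at $r^{-\alpha_3}$, provided $\alpha_3>\alpha_2$. The natural strategy is to construct $\varphi(x,\cdot)$ explicitly via an integral representation. Set $\Psi(x,s):=1/\Phi(x,s^{-1/\alpha_3})$ for $s>0$; by \eqref{e:llee} the map $s\mapsto \Psi(x,s)$ is (comparable to) an increasing function with weak scaling exponents $\alpha_1/\alpha_3$ and $\alpha_2/\alpha_3$, both of which lie in $(0,1)$ since $\alpha_3>\alpha_2\ge\alpha_1>0$. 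So the problem reduces to the (uniform in $x$) statement: an increasing function on $(0,\infty)$ with weak scaling exponents in $(0,1)$ is comparable to a complete Bernstein function.

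First I would recall/quote the standard fact that a function $\varphi$ on $(0,\infty)$ is a complete Bernstein function iff it admits the representation $\varphi(s)=a+bs+\int_0^\infty \frac{s}{s+t}\,\rho(dt)$ for some $a,b\ge 0$ and a measure $\rho$ with $\int_0^\infty (1+t)^{-1}\rho(dt)<\infty$ (see \cite{SSV}). The candidate is to take $a=b=0$ and choose $\rho$ so that $\varphi(x,s)\simeq\Psi(x,s)$. A clean way to produce such a $\rho$: since $\Psi(x,\cdot)$ has upper scaling exponent $<1$, the function $t\mapsto \Psi(x,t)/t$ is comparable to a decreasing function, and one sets $\rho_x(dt)$ to have density comparable to $-d(\Psi(x,t)/t)$; the lower scaling exponent $>0$ guarantees the integrability condition $\int_0^\infty(1+t)^{-1}\rho_x(dt)<\infty$. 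Then one checks, using the scaling bounds \eqref{e:llee}, that $\int_0^\infty \frac{s}{s+t}\,\rho_x(dt)\simeq \Psi(x,s)$ with constants depending only on $c_1,c_2,\alpha_1,\alpha_2,\alpha_3$ — this is the routine but slightly delicate estimate: split the integral at $t=s$, bound the near-zero part by $\int_0^s \frac{s}{t}\cdot(\text{stuff})$ and the tail by $\int_s^\infty(\text{stuff})$, and use the two-sided power bounds on $\Psi(x,t)/\Psi(x,s)$ to pin down both sides. Defining $\varphi(x,s):=\int_0^\infty \frac{s}{s+t}\,\rho_x(dt)$ gives a complete Bernstein function with $\varphi(x,s)\simeq\Psi(x,s)=1/\Phi(x,s^{-1/\alpha_3})$, hence $\Phi(x,r)\simeq 1/\varphi(x,r^{-\alpha_3})$ after substituting $s=r^{-\alpha_3}$. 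The uniformity in $x$ is automatic because every estimate used only the uniform constants $c_1,c_2$ and the exponents.

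Finally, the last assertion \eqref{e:dd1} is immediate: from $\varphi(x,s)\simeq 1/\Phi(x,s^{-1/\alpha_3})$ and the weak scaling of $\Phi(x,\cdot)$ with $(\alpha_1,\alpha_2)$, for $0<r\le R$ one writes $\varphi(x,R)/\varphi(x,r)\simeq \Phi(x,r^{-1/\alpha_3})/\Phi(x,R^{-1/\alpha_3})$ and applies \eqref{e:llee} with the pair $R^{-1/\alpha_3}\le r^{-1/\alpha_3}$, which converts the exponents $\alpha_1,\alpha_2$ into $\alpha_1/\alpha_3,\alpha_2/\alpha_3$; absorbing the comparability constants gives new constants $c_3,c_4$.

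\textbf{Main obstacle.} The only real work is the two-sided comparison $\int_0^\infty \frac{s}{s+t}\,\rho_x(dt)\simeq \Psi(x,s)$, i.e. verifying that the Stieltjes-type integral of the chosen measure reproduces the target function up to multiplicative constants; the subtle point is arranging the measure $\rho_x$ (for which $\Psi(x,\cdot)$ need only be \emph{comparable} to an increasing function, not itself increasing) so that both the integrability condition and the lower bound hold, using crucially that the lower scaling exponent is strictly positive and the upper one strictly below $1$. Everything else — the substitution, the transfer of scaling exponents, and the uniformity — is bookkeeping.
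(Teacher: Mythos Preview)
Your overall strategy --- build $\varphi(x,\cdot)$ via the Stieltjes representation $\varphi(x,\lambda)=\int_0^\infty \frac{\lambda}{\lambda+t}\,\rho_x(dt)$ and then verify $\varphi(x,\lambda)\simeq\Psi(x,\lambda):=1/\Phi(x,\lambda^{-1/\alpha_3})$ by splitting the integral at $t=\lambda$ --- is exactly the paper's approach. The derivation of \eqref{e:dd1} at the end is also the same.

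However, your specific choice of measure is wrong. You propose $\rho_x(dt)\sim -d(\Psi(x,t)/t)$, using that $t\mapsto \Psi(x,t)/t$ is (comparable to) a decreasing function. But near $t=0$ the scaling gives $\Psi(x,t)/t\simeq t^{\beta-1}$ with $\beta\in[\alpha_1/\alpha_3,\alpha_2/\alpha_3]\subset(0,1)$, so this function blows up and its Stieltjes measure has density $\simeq t^{\beta-2}$; hence $\int_0^1 (1+t)^{-1}\rho_x(dt)\simeq\int_0^1 t^{\beta-2}\,dt=\infty$. Your claim that ``the lower scaling exponent $>0$ guarantees the integrability condition'' is therefore false for this $\rho_x$, and the resulting $\varphi(x,\cdot)$ is identically $+\infty$.

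The fix is the paper's simpler choice: take the density to be $\Psi(x,t)/t$ itself rather than its differential, i.e.
\[
\varphi(x,\lambda)=\int_0^\infty \frac{\lambda}{\lambda+t}\cdot\frac{1}{t\,\Phi(x,t^{-1/\alpha_3})}\,dt .
\]
Now the integrability $\int_0^\infty(1+t)^{-1}\frac{\Psi(x,t)}{t}\,dt<\infty$ holds: near $0$ the integrand is $\simeq t^{\beta-1}$ (this is where the lower exponent $>0$ is actually used), and near $\infty$ it is $\simeq t^{\beta-2}$ (using the upper exponent $<1$). The paper checks complete-Bernstein-ness by rewriting $\frac{\lambda}{t(\lambda+t)}=\int_0^\infty(1-e^{-\lambda u})e^{-tu}\,du$ and observing that the resulting L\'evy density $u\mapsto\int_0^\infty e^{-tu}\Psi(x,t)\,t^{-1}\,dt$ is completely monotone. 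The two-sided comparison $\varphi(x,\lambda)\simeq\Psi(x,\lambda)$ then follows from the split at $t=\lambda$ exactly as you outlined, with the lower exponent controlling the tail and $\alpha_3>\alpha_2$ controlling the piece near $0$.
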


\begin{proof} For any fixed $\alpha_3>\alpha_2$ and $x\in M$, define
$$
\varphi(x,\lambda)=\int^{\infty}_{0} \frac{\lambda}{\lambda +s}  \, \frac1{s \Phi(x,s^{-1/\alpha_3}) }\,
ds,\quad \lambda\geq 0,
$$
and
$$
\hat\Phi(x,u)=\int_0^\infty\frac{ e^{-us}}{\Phi(x,s^{-1/\alpha_3})}\,ds,\quad u\ge0.$$
Since
$$
\int^{\infty}_{0}(1-e^{-\lambda u}) e^{-us}  \,du=
\int^{\infty}_{0} e^{-su}\, du-\int^{\infty}_{0}e^{-(\lambda+s) u}  \, du=
\frac{\lambda}{s(\lambda+s)},
$$
we have
$$
\varphi(x,\lambda) = \int^{\infty}_{0}
 \int_0^\infty(1-e^{-\lambda u}) e^{-us}\,du \frac{1}{\Phi(x,s^{-1/\alpha_3})}
\,ds= \int^{\infty}_{0}   (1-e^{-\lambda u})\hat\Phi(x,u)\,du.
$$
In particular, $\hat\Phi(x,\cdot)$ is a completely monotone function, and so
$\varphi(x,\cdot)$ is a complete Bernstein function.

By the change of  variable $u=s^{-1/\alpha_3}$, we have that for any $x\in M$ and $\lambda>0$,
\begin{align*}
\varphi(x,\lambda)=\alpha_3\int^{\infty}_{0} \frac{\lambda u^{\alpha_3}}{\lambda u^{\alpha_3}+1}
\frac1{u\Phi(x,u)}
\,du
\simeq
 \psi(x,\lambda), \end{align*}
where
\begin{align*}
\psi(x,\lambda):  = \int^{\infty}_{0} (1\wedge (\lambda u^{\alpha_3}))
 \frac{1}{u\Phi(x,u)}\,du.
\end{align*}

Note that for all $x\in M$ and $\lambda>0$,
\begin{align*}
\psi(x,\lambda)\Phi(x,\lambda^{-1/\alpha_3})&= \int^{\infty}_{0} (1\wedge (\lambda u^{\alpha_3}))
 \frac{\Phi(x,\lambda^{-1/\alpha_3})}{u\Phi(x,u)}\,du\\
 & =\lambda\int_0^{\lambda^{-1/\alpha_3}} u^{\alpha_3-1}
 \frac{\Phi(x,\lambda^{-1/\alpha_3})}{\Phi(x,u)}\,du+\int_{\lambda^{-1/\alpha_3}}^\infty
 \frac{\Phi(x,\lambda^{-1/\alpha_3})}{u\Phi(x,u)}\,du.
\end{align*}
Using
\eqref{e:llee},
we can find that for all $x\in M$ and $\lambda>0$,
\begin{align*}
\frac{c_1}{\alpha_3-\alpha_1} \lambda^{-1}&=c_1  \lambda^{-\alpha_1/\alpha_3} \int_0^{\lambda^{-1/\alpha_3}} u^{\alpha_3-1-\alpha_1}\,
 du   \\
 &\le \int_0^{\lambda^{-1/\alpha_3}} u^{\alpha_3-1}
 \frac{\Phi(x,\lambda^{-1/\alpha_3})}{\Phi(x,u)}\,du \\
 &\le c_2  \lambda^{-\alpha_2/\alpha_3} \int_0^{\lambda^{-1/\alpha_3}}  u^{\alpha_3-1-\alpha_2}\,
 du=\frac{c_2}{\alpha_3-\alpha_2} \lambda^{-1}
 \end{align*}
and  \begin{align*}
\frac{1}{c_2\alpha_2}&=c_2^{-1} \lambda^{-\alpha_2/\alpha_3} \int_{\lambda^{-1/\alpha_3}}^\infty u^{-1-\alpha_2}\,
 du  \\
 &\le \int_{\lambda^{-1/\alpha_3}}^\infty
 \frac{\Phi(x,\lambda^{-1/\alpha_3})}{ u\Phi(x,u)}\,du \le c_1^{-1} \lambda^{-\alpha_1/\alpha_3} \int_{\lambda^{-1/\alpha_3}}^\infty u^{-1-\alpha_1}\, du=\frac{1}{c_1\alpha_1}.
 \end{align*}
 Therefore, for all $x\in M$ and $\lambda>0$,
$$
 \varphi(x,\lambda) \simeq \frac{1}{\Phi(x,\lambda^{-1/\alpha_3})},
$$
 which along with \eqref{e:llee} yields
\eqref{e:dd1}. The proof is complete.\end{proof}

By Lemma \ref{p:dd1} above, for any function $\Phi(x,r)$ satisfying \eqref{e:llee},
we have
$$
\Phi(x,r)\simeq \tilde\Phi(x,r):= 1/ \varphi(x,r^{-\alpha_3})
$$
for some complete Bernstein function $\varphi(x,\cdot)$ and  $\alpha_3>\alpha_2$.
According to \eqref{e:04}, for all $x\in M$ and $r>0$,
$$
r \partial_r \tilde \Phi(x,r)
=\alpha_3 \frac{\partial_r \varphi(x,r^{-\alpha_3})}{\varphi(x,r^{-\alpha_3})^2} \, r^{-\alpha_3} \simeq  \frac{1}{\varphi(x,r^{-\alpha_3})} =\tilde\Phi(x,r)
$$
and so, by the inverse function theorem with $t=\tilde\Phi(x, s)$, for all $x\in M$ and $t>0$,
\begin{align}
\label{e:Phiinvesd}
\frac{(\partial_t \tilde\Phi^{-1}(x,\cdot))(t)}{\tilde\Phi^{-1}(x,\cdot)( t)} =\frac{(\partial_t\tilde\Phi^{-1}(x,\cdot))(\tilde\Phi(x, s))}{s}=
\frac{1}{s \partial_s \tilde \Phi(x, s)}
\simeq\frac{1}{\tilde\Phi(x, s)}     =  \frac1t.
\end{align}

\subsection{{\bf Estimates for subordinator}}

\begin{proposition}\label{p:sub}  Let $\{S_t:t\ge0\}$ be a subordinator whose Laplace exponent
 $\phi$ satisfies assumption \eqref{e:phi}.
\begin{itemize}
\item[(i)] There are constants $c_1,c_2>0$ such that for all $r,t\ge0$,
\begin{equation}\label{s:es-1}
\bP  \left( S_r\ge t(1+er\phi(t^{-1}) \right) \le c_1 r\phi(t^{-1})
\end{equation}
and
\begin{equation}\label{s:es-2}
\bP (S_r\geq  t)
\ge 1- e^{-c_2r \phi(t^{-1})}.\end{equation}
 In particular, for each $L>0$, there exist constants $c_{1,L},c_{2,L} >0$ such that
for all $r\phi(t^{-1})\le L$,
$$
c_{1,L} r\phi(t^{-1})\le \bP (S_r\ge t)\le  c_{2,L} r\phi(t^{-1}).
$$

\item[(ii)] There is a constant $c_1>0$ such that for all $r, t>0$,
$$ \bP (S_r\le t)\le  \exp(-c_1 r \phi\circ[(\phi')^{-1}] (t/r)) \le  \exp(-c_1t (\phi')^{-1}(t/r)).$$
Moreover, there is a constant $c_0>0$ such that for each $L>0$, there exists a constant $c_{c_0,L}>0$ so that  for  $r\phi(t^{-1}) > L$
 $$ \bP (S_r\le t)\ge c_{c_0,L} \exp\left(-c_0 r \phi\circ[(\phi')^{-1}] (t/r)\right)\ge  c_{c_0,L} \exp(- c_0 C_*t (\phi')^{-1}(t/r)),$$ where $C_*>0$ is the constant in \eqref{e:04}.\end{itemize}

\end{proposition}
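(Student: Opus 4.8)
The plan is to prove the two parts separately, each by combining the exponential Chebyshev (Markov) inequality with the weak scaling property \eqref{e:phi} and its consequences in Lemma \ref{l:AKM}. For part (i), for the upper bound \eqref{s:es-1} I would write, for any $\lambda>0$,
$$
\bP(S_r \ge a) \le e^{\lambda a}\,\bE e^{-\lambda S_r} = e^{\lambda a - r\phi(\lambda)},
$$
and then make the optimizing-type choice $\lambda = t^{-1}$ together with $a = t(1+er\phi(t^{-1}))$, which gives the exponent $1 + er\phi(t^{-1}) - r\phi(t^{-1}) = 1 + (e-1)r\phi(t^{-1})$; hmm, that is not quite sharp, so instead I would optimize more carefully (or use $\lambda$ of order $t^{-1}$ times a constant depending on whether $r\phi(t^{-1})$ is large or small). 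The cleanest route: bound $\bP(S_r\ge t(1+s)) \le e^{-s r\phi(t^{-1})}\cdot e^{\,r(t^{-1}\cdot t - \phi(t^{-1})\cdot\,?)}$... To avoid grinding, the key point is that with $\lambda=t^{-1}$ one gets $\bP(S_r\ge t+tx)\le \exp(1+x-r\phi(t^{-1}))$ only if $r\phi(t^{-1})\le 1$; for the stated form one chooses $x = er\phi(t^{-1})$ and uses $e^{1 - (e-1)r\phi(t^{-1})}\le c_1 r\phi(t^{-1})$ when $r\phi(t^{-1})\ge c$, while the small-$r\phi(t^{-1})$ regime needs a different $\lambda$. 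For the lower bound \eqref{s:es-2} I would use $\bP(S_r < t) = \bP(e^{-\lambda S_r} > e^{-\lambda t}) \le e^{\lambda t}\bE e^{-\lambda S_r} = e^{\lambda t - r\phi(\lambda)}$ and again take $\lambda = c\,t^{-1}$, using \eqref{e:phi} to see $r\phi(ct^{-1}) \ge c' r\phi(t^{-1})$ and choosing $c$ small enough that the $e^{ct}\cdot t^{-1}\cdot(\text{factor})$ is absorbed; this yields $\bP(S_r<t)\le e^{-c_2 r\phi(t^{-1})}$. The two-sided conclusion for $r\phi(t^{-1})\le L$ then follows: the upper bound is \eqref{s:es-1} with the observation that $1+er\phi(t^{-1})$ is bounded by a constant when $r\phi(t^{-1})\le L$, combined with monotonicity of $r\mapsto S_r$ to pass from $\bP(S_r\ge t(1+er\phi(t^{-1})))$ to a bound on $\bP(S_{r}\ge t)$ after rescaling $r$; the lower bound is $\bP(S_r\ge t)\ge 1 - e^{-c_2 r\phi(t^{-1})}\ge c\, r\phi(t^{-1})$ using $1-e^{-x}\ge cx$ for $x\le c_2 L$.

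For part (ii), the estimate on $\bP(S_r\le t)$ is the genuinely substantive one and I expect it to be the main obstacle. The upper bound again starts from $\bP(S_r\le t)\le e^{\lambda t}\bE e^{-\lambda S_r} = \exp(\lambda t - r\phi(\lambda))$, valid for all $\lambda>0$; one then \emph{maximizes} $r\phi(\lambda) - \lambda t$ over $\lambda$, i.e.\ chooses $\lambda = \lambda_*$ solving $r\phi'(\lambda_*) = t$, i.e.\ $\lambda_* = (\phi')^{-1}(t/r)$. Substituting gives
$$
\bP(S_r\le t)\le \exp\bigl(-\,[\,r\phi(\lambda_*) - \lambda_* t\,]\bigr) = \exp\bigl(-r\phi((\phi')^{-1}(t/r)) + t\,(\phi')^{-1}(t/r)\bigr),
$$
and here I would invoke \eqref{e:04} from Lemma \ref{l:AKM}: since $\lambda\phi'(\lambda)\le\phi(\lambda)$, we have $t(\phi')^{-1}(t/r) = r\,\lambda_*\phi'(\lambda_*)\le r\phi(\lambda_*)$, and in fact $\lambda_*\phi'(\lambda_*)\le\phi(\lambda_*)\le C_*\lambda_*\phi'(\lambda_*)$ lets us write $r\phi(\lambda_*) - t(\phi')^{-1}(t/r) \ge (1 - C_*^{-1})\,r\phi(\lambda_*) \ge c_1\, r\,\phi\circ[(\phi')^{-1}](t/r)$; the second displayed inequality, $r\phi((\phi')^{-1}(t/r))\ge r\lambda_*\phi'(\lambda_*) = t(\phi')^{-1}(t/r)$ (equivalently $\ge C_*^{-1}$ times it), is just \eqref{e:04} read the other way. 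This handles all $r,t>0$.

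The lower bound on $\bP(S_r\le t)$ in the regime $r\phi(t^{-1})>L$ is where I expect to spend the most effort, since one cannot simply use Chebyshev. The standard approach: pick $\lambda_* = (\phi')^{-1}(t/r)$ as above, tilt the measure by $e^{-\lambda_* S_r}/\bE e^{-\lambda_* S_r}$ (an Esscher/exponential change of measure), under which $S_r$ has mean $r\phi'(\lambda_*) = t$; then $\bP(S_r\le t)\ge \bE e^{-\lambda_* S_r}\,\bE_{\lambda_*}[e^{\lambda_* S_r}\I_{\{t/2\le S_r\le t\}}] \ge e^{-r\phi(\lambda_*)}\,e^{\lambda_* t/2}\,\bP_{\lambda_*}(t/2\le S_r\le t)$, and one needs a lower bound of order a constant (depending on $L$) on this last probability. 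The probability $\bP_{\lambda_*}(t/2\le S_r\le t)$ is bounded below via a second-moment / Paley–Zygmund argument using that under $\bP_{\lambda_*}$ the variable $S_r$ has mean $t$ and variance $r\phi''(\lambda_*)$ controlled (through \eqref{e:04}–\eqref{e:031}) by $t^2/(r\phi(t^{-1}))\lesssim t^2/L$; more robustly, since $r\phi(t^{-1})>L$ means the "number of jumps" scale is large, one gets a uniform lower bound on the tilted probability depending only on $L$. Combining, $\bP(S_r\le t)\ge c_{c_0,L}\exp(-r\phi(\lambda_*) + \lambda_* t/2)\ge c_{c_0,L}\exp(-c_0 r\phi\circ[(\phi')^{-1}](t/r))$, and then $\phi(\lambda_*)\le C_*\lambda_*\phi'(\lambda_*) = C_* (t/r)(\phi')^{-1}(t/r)$ converts this to the final form $\ge c_{c_0,L}\exp(-c_0 C_* t(\phi')^{-1}(t/r))$. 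Throughout, scaling relations \eqref{e:031} and \eqref{e:03} are used to guarantee that the various constants $c_0, c_1, c_{c_0,L}$ depend only on $\beta_1,\beta_2$ and $L$, not on $r$ or $t$.
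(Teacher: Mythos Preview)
Your proposal contains two genuine errors that break the argument as written.

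For \eqref{s:es-1} you write $\bP(S_r\ge a)\le e^{\lambda a}\,\bE e^{-\lambda S_r}$, but this inequality is false: on $\{S_r\ge a\}$ one has $e^{\lambda a}e^{-\lambda S_r}\le 1$, not $\ge 1$. The \emph{upper} tail of a subordinator cannot be controlled via the Laplace transform, since the positive exponential moments $\bE e^{\lambda S_r}$ needed for the correct Chebyshev bound typically diverge. The standard route (Pruitt; the paper's reference \cite{Ante}) is a truncation-plus-moment argument: the large jumps contribute a Poisson count with mean $r\nu(t,\infty)\simeq r\phi(t^{-1})$, and the small-jump part is handled by a first-moment bound. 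Likewise, for \eqref{s:es-2} your Chebyshev bound $\bP(S_r<t)\le e^{\lambda t-r\phi(\lambda)}$ with $\lambda=ct^{-1}$ gives at best $e^{c}e^{-c'r\phi(t^{-1})}$, which exceeds $1$ when $r\phi(t^{-1})$ is small and hence says nothing there; the inequality $\bP(S_r\ge t)\ge 1-e^{-r\nu(t,\infty)}$ (a single jump of size $>t$ suffices), combined with $\nu(t,\infty)\ge c\phi(t^{-1})$ from the lower scaling, is what actually gives \eqref{s:es-2}.

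In part (ii), your claim $r\phi(\lambda_*)-t\lambda_*\ge(1-C_*^{-1})\,r\phi(\lambda_*)$ uses \eqref{e:04} in the wrong direction: since $t\lambda_*=r\lambda_*\phi'(\lambda_*)$, the inequality $\phi(\lambda_*)\le C_*\lambda_*\phi'(\lambda_*)$ yields only $r\phi(\lambda_*)-t\lambda_*\le(1-C_*^{-1})\,r\phi(\lambda_*)$, the opposite of what you need. A strict gap $\lambda\phi'(\lambda)\le(1-\delta)\phi(\lambda)$ \emph{does} hold under upper scaling with $\beta_2<1$, but it must be derived separately, not read off from \eqref{e:04}. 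The paper sidesteps all of this: it cites \cite{Ante} and \cite{JP} directly for the probability bounds, uses \eqref{e:04} only to establish the algebraic equivalence \eqref{e:pac} between $r\,\phi\!\circ\![(\phi')^{-1}](t/r)$ and $t\,(\phi')^{-1}(t/r)$, and then extends the Jain--Pruitt lower bound from its natural range down to $r\phi(t^{-1})>L$ by monotonicity of $r\mapsto\bP(S_r\le t)$ together with a scaling computation showing $r\,\phi\!\circ\![(\phi')^{-1}](t/r)\simeq 1$ when $L<r\phi(t^{-1})\le c_3$. Your Esscher-tilting outline for that lower bound is essentially the Jain--Pruitt argument and is sound in spirit.
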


\begin{proof} (i) \eqref{s:es-1} and \eqref{s:es-2} follow from \cite[Propositions 2.3 and 2.9]{Ante} and \cite[Proposition 2.5, Lemma 2.6 and Proposition 2.9]{Ante}, respectively. The last assertion is a direct consequence of
\eqref{e:phi}, \eqref{s:es-1} and \eqref{s:es-2}.

(ii) According to \eqref{e:031}, we have $\phi'(0)=\infty$ and so $$\int_0^\infty s\,\nu(ds)=\phi'(0)=\infty.$$
Since
$$r \cdot \phi'\circ[(\phi')^{-1}](t/r) \cdot (\phi')^{-1}(t/r)= t \cdot (\phi')^{-1}(t/r),$$
by \eqref{e:04}
\begin{align}
\label{e:pac}
 t \cdot (\phi')^{-1}(t/r) \le r \phi\circ[(\phi')^{-1}] (t/r) \le
 C_* t \cdot (\phi')^{-1}(t/r).
\end{align}
Now, the first assertion follows from \eqref{e:pac},
\cite[Lemma 5.2]{JP}
and \cite[Proposition 2.9]{Ante}.

On the other hand, by \cite[Lemma 5.2]{JP} and \cite[Proposition 2.9]{Ante} again,
there exist constants $c_0, c_1, c_2>0$ $(c_0$ is independent of $c_1$ and $c_2$) such that
for $r \phi\circ[(\phi')^{-1}] (t/r) \ge  c_1$,
\begin{equation*}
\label{e:pac1} \bP (S_r\le t)\ge c_2 \exp\Big(-c_0 r \phi\circ[(\phi')^{-1}](t/r)\Big). \end{equation*}
Thus, according to \eqref{e:03}, \eqref{e:pac} and \eqref{e:04}, we see that there exists a constant $c_3>0$ such that for  $r\phi(t^{-1})  \ge c_3$ (so that $\phi'(t^{-1}) \ge (c_3/C_*)t/r$)
\begin{equation} \label{e:pac2}
\bP (S_r\le t)\ge c_2 \exp\Big(-c_0 r \phi\circ[(\phi')^{-1}](t/r)\Big) \ge  c_2 \exp\Big(-
c_0C_*t (\phi')^{-1}(t/r)\Big).
\end{equation}
We observe that,  if  $L<r\phi(t^{-1})  \le c_3$ for a constant $L>0$, then
by \eqref{e:phi} and \eqref{e:04}
$$ r \phi\circ[(\phi')^{-1}](t/r)\le  \frac{c_3\phi\circ[(\phi')^{-1}](t\phi(t^{-1})/c_3)}{\phi(t^{-1})}=  \frac{c_3\phi\circ[(\phi')^{-1}]\left(\frac{\phi(t^{-1})}{c_3t^{-1}}\right)}{\phi\circ[(\phi')^{-1}] (\phi'(t^{-1}))}\simeq 1, $$
and
$$ r \phi\circ[(\phi')^{-1}](t/r)\ge  \frac{L\phi\circ[(\phi')^{-1}](t\phi(t^{-1})/L)}{\phi(t^{-1})}=  \frac{L\phi\circ[(\phi')^{-1}]\left(\frac{\phi(t^{-1})}{Lt^{-1}}\right)}{\phi\circ[(\phi')^{-1}] (\phi'(t^{-1}))}\simeq 1, $$
Thus, using \eqref{e:pac2} for $r\phi(t^{-1})  = c_3$, we have that for any $L>0$ such that
 $L<r\phi(t^{-1})  \le c_3$,
 $$\bP (S_r\le t)\ge  \bP (S_{c_3/\phi(t^{-1})}\le t) \ge c_2   e^{-c_4} \ge c_2     \exp\big(-c_5 r \phi\circ[(\phi')^{-1}](t/r)\big). $$
This completes the proof.
\end{proof}

\subsection{{\bf
Preliminary lower bound estimates
for $p(t,x,y)$}}
Recall that
$X$ is a strong Markov process on the locally compact separable metric measure space $( M,d, \mu ) $ having infinite lifetime and
$q(t, x, y)$  is the transition density function of $X$ with respect to $\mu$.

The next statement is a key lemma used in the proof of the lower bound for $p(t,x,y)$, which is defined in \eqref{e:1.5}.

\begin{lemma} \label{L:lower1m}
Suppose that for each $T>0$ there exists a constant $c_0=c_0(T)>0$ such that
\begin{equation}\label{e:hklowi}
q(t,x,y)\ge  \frac{c_0}{V(x, \Phi^{-1}(t))}
\quad \text{for all }  x,y\in M \hbox{ and }
t \in(0, T\Phi(d(x,y))],
\end{equation}
where $\Phi :[0,+\infty )\rightarrow \lbrack 0,+\infty )$
is a strictly increasing function with $\Phi(0)=0$
and satisfies the weak scaling property with $(\alpha_1,\alpha_2)$
for some constants $0<\alpha_1\le \alpha_2<\infty$.
Then for every $L>0$,
there is a constant $c_1:=c_1(L)>0$ such that for all $x,y\in M$ and $t>0$ with $\Phi( d(x, y))\phi(t^{-1})\le L$,
\begin{align*}
p(t,x, y)&\ge c_1 \left(\frac{1}{V(x,\Phi^{-1}(1/\phi(t^{-1})))} \vee   \frac{\Phi(d(x, y)) \phi(t^{-1})}{ V(x,d(x, y)) }  \right).
\end{align*}
\end{lemma}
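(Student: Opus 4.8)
The plan is to start from the formula $p(t,x,y)=\int_0^\infty q(r,x,y)\,d_r\bP(S_r\ge t)$ and to lower-bound it by restricting the integral to a suitable range of $r$ on which \eqref{e:hklowi} applies, namely $r\in(0, T\Phi(d(x,y))]$ for a fixed constant $T$ chosen below. On such an $r$, $q(r,x,y)\ge c_0/V(x,\Phi^{-1}(r))$, and since $\Phi^{-1}$ inherits the weak scaling property together with the volume doubling/reverse doubling \eqref{vd1}, the quantity $1/V(x,\Phi^{-1}(r))$ is comparable, up to multiplicative constants depending only on the scaling exponents, as $r$ ranges over a fixed multiplicative interval. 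Integrating the increments $d_r\bP(S_r\ge t)$ of the distribution function $r\mapsto\bP(S_r\ge t)$ over such an interval produces a difference $\bP(S_{r_2}\ge t)-\bP(S_{r_1}\ge t)$, which we estimate from below using Proposition \ref{p:sub}(i): in the regime $r\phi(t^{-1})\le L$ one has $c_{1,L}\,r\phi(t^{-1})\le\bP(S_r\ge t)\le c_{2,L}\,r\phi(t^{-1})$, so the increment over $[r_1,r_2]$ with $r_2=2r_1$ (say) is at least of order $r_1\phi(t^{-1})$ provided the constants are arranged so the lower bound at $r_2$ beats the upper bound at $r_1$ — this forces us to take the interval length large enough in a multiplicative sense, or alternatively to work with $[\theta r_1, r_1]$ for a fixed small $\theta$; either way only scaling-type constants are involved.

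The two terms in the claimed lower bound correspond to two choices of where to place the interval of integration. For the term $\dfrac{\Phi(d(x,y))\phi(t^{-1})}{V(x,d(x,y))}$: since $\Phi(d(x,y))\phi(t^{-1})\le L$, the value $r\approx\Phi(d(x,y))$ lies in the regime $r\phi(t^{-1})\lesssim L$ where the two-sided estimate of Proposition \ref{p:sub}(i) holds, and it also lies in $(0,T\Phi(d(x,y))]$ for any $T\ge 1$, so \eqref{e:hklowi} is available; integrating over $r\in[\theta\Phi(d(x,y)),\Phi(d(x,y))]$ gives $p(t,x,y)\gtrsim \dfrac{1}{V(x,\Phi^{-1}(\Phi(d(x,y))))}\cdot\Phi(d(x,y))\phi(t^{-1})=\dfrac{\Phi(d(x,y))\phi(t^{-1})}{V(x,d(x,y))}$, using $\Phi^{-1}(\Phi(d(x,y)))\simeq d(x,y)$ and volume doubling. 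For the term $\dfrac{1}{V(x,\Phi^{-1}(1/\phi(t^{-1})))}$: here we want to integrate near $r\approx 1/\phi(t^{-1})$. This $r$ satisfies $r\phi(t^{-1})\simeq 1\le L$ (after enlarging $L$ if necessary, or noting $L$ can be taken $\ge 1$ WLOG since the statement gets weaker as $L$ grows), so Proposition \ref{p:sub}(i) again applies; and the constraint $r\le T\Phi(d(x,y))$ reads $1/\phi(t^{-1})\lesssim T\Phi(d(x,y))$, i.e. $\Phi(d(x,y))\phi(t^{-1})\gtrsim 1/T$ — which need \emph{not} hold in our regime $\Phi(d(x,y))\phi(t^{-1})\le L$. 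When it fails, i.e. when $\Phi(d(x,y))\phi(t^{-1})$ is small, one has $\Phi^{-1}(1/\phi(t^{-1}))\gg d(x,y)$ (by weak scaling of $\Phi^{-1}$ and $\Phi(d(x,y))\phi(t^{-1})$ small), hence by volume doubling $V(x,\Phi^{-1}(1/\phi(t^{-1})))\gg V(x,d(x,y))$, so $\dfrac{1}{V(x,\Phi^{-1}(1/\phi(t^{-1})))}\lesssim \dfrac{1}{V(x,d(x,y))}$; combined with $\Phi(d(x,y))\phi(t^{-1})$ being bounded below only by $0$ this term is then dominated — more carefully, using \eqref{vd1} with exponent $d_1$, the ratio of the two candidate lower bounds is controlled by a power of $\Phi(d(x,y))\phi(t^{-1})$, so when this product is $\le c$ the first term dominates the second and we only need the first, while when $c\le\Phi(d(x,y))\phi(t^{-1})\le L$ both integration ranges are legitimate and we get both terms. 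Taking the maximum over the two cases yields the stated bound.

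The main obstacle I anticipate is the bookkeeping around the constant $T$ in \eqref{e:hklowi} versus the threshold $L$ in the conclusion, and in particular making the argument for the $1/V(x,\Phi^{-1}(1/\phi(t^{-1})))$ term uniform: one must split according to whether $\Phi(d(x,y))\phi(t^{-1})$ is above or below a fixed small constant, show that in the small regime this term is automatically $\le c$ times the other term (so it can be dropped from the max), and in the intermediate regime $[\text{const},L]$ verify that $r\approx 1/\phi(t^{-1})$ indeed lies both in the validity range of \eqref{e:hklowi} and in the range $r\phi(t^{-1})\le$ (some $L'$) of Proposition \ref{p:sub}(i). All the remaining estimates — comparability of $1/V(x,\Phi^{-1}(r))$ over a fixed multiplicative $r$-interval, and positivity of the increment of $\bP(S_r\ge t)$ — follow directly from \eqref{vd1}, the weak scaling of $\Phi$, and Proposition \ref{p:sub}(i), and involve only constants depending on $L$ and the fixed scaling exponents.
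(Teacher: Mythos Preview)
Your overall strategy --- restrict $p(t,x,y)=\int_0^\infty q(r,x,y)\,d_r\bP(S_r\ge t)$ to a multiplicative $r$-interval, bound the integrand below via \eqref{e:hklowi}, and bound the increment of $r\mapsto\bP(S_r\ge t)$ via Proposition~\ref{p:sub}(i) --- is exactly the paper's approach, and for the term $\Phi(d(x,y))\phi(t^{-1})/V(x,d(x,y))$ your argument (integrate over $r\simeq\Phi(d(x,y))$) is the paper's argument.

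The gap is in your treatment of the term $1/V(x,\Phi^{-1}(1/\phi(t^{-1})))$ in the regime where $\lambda:=\Phi(d(x,y))\phi(t^{-1})$ is small. Your claimed reduction --- that in this regime this term is dominated by $\Phi(d(x,y))\phi(t^{-1})/V(x,d(x,y))$ --- is not true in general. By \eqref{vd1} and the weak scaling of $\Phi^{-1}$ one only gets
\[
\frac{\Phi(d(x,y))\phi(t^{-1})/V(x,d(x,y))}{1/V(x,\Phi^{-1}(1/\phi(t^{-1})))}\;\ge\;c\,\lambda^{\,1-d_1/\alpha_2},
\]
which tends to $\infty$ as $\lambda\to 0$ only when $d_1>\alpha_2$. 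In the low-dimensional regime $d_2<\alpha_1$ (e.g.\ $V(x,r)\simeq r^d$, $\Phi(s)=s^\alpha$ with $d<\alpha$) the reverse holds: as $d(x,y)\to 0$ with $t$ fixed, $\Phi(d(x,y))\phi(t^{-1})/V(x,d(x,y))\to 0$ while $1/V(x,\Phi^{-1}(1/\phi(t^{-1})))$ stays bounded away from $0$. So your case split leaves precisely that case unproved.

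The paper does no case split: it integrates over $[\kappa_2/\phi(t^{-1}),\kappa_1/\phi(t^{-1})]$ and applies \eqref{e:hklowi} directly on that interval, using $\Phi(d(x,y))\phi(t^{-1})\le L$ to check the hypothesis. This works because the operative range in \eqref{e:hklowi} is the \emph{near-diagonal} one $\Phi(d(x,y))\le Tt$ (equivalently $t\ge\Phi(d(x,y))/T$), as in \eqref{d:ond} and the shape of \eqref{e:hkmequi}/\eqref{eq:fibie3}; the printed range $t\in(0,T\Phi(d(x,y))]$ is a misprint (as written it fails for both model heat kernels as $t\to 0$). With the near-diagonal reading, $r\ge\kappa_2/\phi(t^{-1})$ and $\Phi(d(x,y))\le L/\phi(t^{-1})$ give $\Phi(d(x,y))\le (L/\kappa_2)\,r$, so \eqref{e:hklowi} with $T=L/\kappa_2$ applies on the whole interval and the bound $p(t,x,y)\ge c/V(x,\Phi^{-1}(1/\phi(t^{-1})))$ drops out in one stroke --- your anticipated ``main obstacle'' disappears.
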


\begin{proof}
 By \eqref{s:es-1} and \eqref{s:es-2} in Proposition \ref{p:sub} (i),
 we can choose constants $\kappa_1>\kappa_2>0$ such that for all $t>0$,
\begin{equation}\label{eeff11}\bP (S_{\kappa_1/\phi(t^{-1})}\ge t)- \bP (S_{\kappa_2/\phi(t^{-1})}\ge t)\ge c_0.\end{equation}
Inequality \eqref{eeff11}
along with \eqref{e:1.5} yields that for every $L>0$, $x,y\in M$ and $t>0$ with $\Phi( d(x, y))\phi(t^{-1})\le L$
\begin{equation}\label{e:lowpre1} \begin{split}
&p(t, x, y)\ge c\int_{\kappa_2/\phi(t^{-1})}^{\kappa_1/\phi(t^{-1})}  q(r,x, y)\,d_r\bP (S_r\ge t)\\
&\ge c \left(\min_{\kappa_2/\phi(t^{-1})\le r\le \kappa_1/\phi(t^{-1})}  q(r,x, y)\right) \left(\bP (S_{\kappa_1/\phi(t^{-1})}\ge t)- \bP (S_{\kappa_2/\phi(t^{-1})}\ge t)\right)\\
& \ge\frac{c}{V(x,\Phi^{-1}(1/\phi(t^{-1})))},
\end{split}\end{equation}
where in the last inequality we have used \eqref{e:hklowi} and the fact that
$\Phi(d(x,y))\phi(t^{-1})\le L$.
Similarly,
according to \eqref{s:es-1} and \eqref{s:es-2} in Proposition \ref{p:sub} (i),
one can choose constants $\kappa_3>\kappa_4>0$ such that for all $t>0$ and $z\ge0$,
\begin{align}
\label{e:Ssecond}
\bP (S_{\kappa_3\Phi(z)}\ge t)- \bP (S_{\kappa_4 \Phi(z)}\ge t)\ge c_1 \Phi(z)\phi(t^{-1}).
\end{align}
Using \eqref{e:Ssecond} and
the argument of \eqref{e:lowpre1}, we find that for every $L>0$, $x,y\in M$ and $t>0$ with $\Phi( d(x, y))\phi(t^{-1})\le L$
\begin{align*}
&p(t,x,y)\ge c\int_{\kappa_4\Phi(d(x,y))}^{\kappa_3\Phi(d(x,y))} q(r,x, y)\,d_r\bP (S_r\ge t)\\
&\ge c\left( \min_{\kappa_4\Phi(d(x,y))\le r\le \kappa_3 \Phi(d(x,y))} q(r,x, y) \right)\left(\bP (S_{\kappa_3\Phi(d(x,y))}\ge t)- \bP (S_{\kappa_4\Phi(d(x,y))}\ge t)\right)\\
&\ge \frac{c\Phi(d(x, y)) \phi(t^{-1})}{ V(x,d(x, y))}.
\end{align*}
The proof is complete.
\end{proof}

\section{Non-local spatial motions}\label{S:4}

\subsection{Time derivative of heat kernel estimates for jump process}

In this section, we consider the pure jump case where $p(t,x,y)$ satisfies
\eqref{e:hkmequi}. First, note that since $\Phi$
 is strictly increasing and
 satisfies the weak scaling property with $(\alpha_1,\alpha_2)$, there are constants $c_1,c_2>0$ such that for all $\kappa\ge 1$ and $\lambda>0$,
\begin{equation}\label{e:Phii} c_1 \kappa^{1/\alpha_2}\le \frac{\Phi^{-1}(\kappa \lambda)}{\Phi^{-1}(\lambda)}\le c_2 \kappa^{1/\alpha_1}.\end{equation}

Set
\begin{equation}\label{e:4.6}
\bar q(t, x, r) :=\frac{t}{t V(x,\Phi^{-1}(t)) + \Phi (r) \, V(x,r)}, \quad t, r >0 \hbox{ and } x\in M.
\end{equation}
Note that by \eqref{e:hkmequi} and the fact $1 \wedge (1/r)  \simeq 1/(1+r) $ for $r>0$, we have
\begin{equation}\label{e:4.7}
q(t, x, y) \simeq \bar q(t, x, d(x, y)) \quad \hbox{for every } t>0 \hbox{ and } x, y \in M.
\end{equation}
According to Lemma \ref{p:dd1} and the remark at the end of Subsection \ref{section31},
we may and do
assume that both $V(x,\cdot)$ and $\Phi(\cdot)$ are differentiable and satisfy the property like \eqref{e:Phiinvesd}.

We   next give a lemma concerning the time derivative of
$\bar q (t, x, r)$.
\begin{lemma}\label{L:gd} Under assumptions above,
there is a constant $c_1>0$ such that for all $t,r>0$ and $x\in M$,
\begin{equation}\label{e:02m}
\left|\frac{\partial \bar q(t,x, r)}{\partial t}\right| \le c_1   \frac{ \bar q(t,x, r)}t,
\end{equation}
and that there exist constants $c_2, c_3>0$, $c_*\in (0,1)$ and  $c^*\in (1,\infty)$ such that for all $x\in M$,
\begin{align}\label{e:04m1}
\frac{\partial \bar q(t,x, r)}{\partial t} \le -c_2
 \frac{\bar q(t,x, r)}t \quad \text{if }\,\,\Phi(r) \le c_* t,
\end{align}
and
\begin{align}\label{e:04m2}
\frac{\partial \bar q(t,x, r)}{\partial t} \ge c_3  \frac{\bar  q(t,x, r)}t \quad \text{if }\,\,  \Phi(r) \ge c^* t.
\end{align}\end{lemma}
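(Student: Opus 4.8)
The plan is to work directly from the explicit formula \eqref{e:4.6} for $\bar q(t,x,r)$, differentiating the quotient and then using the structural scaling estimates recorded in Subsection \ref{section31}. Write $D(t) := t\,V(x,\Phi^{-1}(t))$ and $N := \Phi(r)\,V(x,r)$ (the latter independent of $t$), so that $\bar q(t,x,r) = t/(D(t)+N)$. A direct computation gives
\[
\frac{\partial \bar q(t,x,r)}{\partial t} = \frac{(D(t)+N) - t\,D'(t)}{(D(t)+N)^2} = \frac{\bar q(t,x,r)}{t}\cdot\frac{(D(t)+N) - t\,D'(t)}{D(t)+N}.
\]
Thus everything reduces to controlling the factor $\Lambda(t,r,x) := \bigl((D(t)+N) - t\,D'(t)\bigr)/(D(t)+N)$, and all three claimed inequalities \eqref{e:02m}, \eqref{e:04m1}, \eqref{e:04m2} follow once we show that $\Lambda$ is bounded above and below by constants (for \eqref{e:02m}), bounded above by a negative constant when $\Phi(r)\le c_* t$ (for \eqref{e:04m1}), and bounded below by a positive constant when $\Phi(r)\ge c^* t$ (for \eqref{e:04m2}).

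First I would establish the key auxiliary fact that $t\,D'(t) \simeq D(t)$, uniformly in $x$ and $t$, with comparison constants $1 \le a_1 \le a_2 < \infty$, say, where moreover $a_1$ can be taken strictly greater than $1$. Indeed, $D(t) = t\,V(x,\Phi^{-1}(t))$, so $t\,D'(t) = D(t) + t\cdot t\,\partial_t\bigl(V(x,\Phi^{-1}(t))\bigr)$. Using the chain rule together with the differentiability reductions permitted by Lemma \ref{p:dd1} and the relation \eqref{e:Phiinvesd} for $\tilde\Phi^{-1}$, combined with the weak scaling property \eqref{vd1} of $V(x,\cdot)$ (which gives $s\,\partial_s V(x,s) \simeq V(x,s)$ after the same smoothing), one gets $t\,\partial_t\bigl(V(x,\Phi^{-1}(t))\bigr) \simeq V(x,\Phi^{-1}(t))$ with constants depending only on $d_1,d_2,\alpha_1,\alpha_2$. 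Hence $t\,D'(t) = D(t) + (\text{something} \simeq D(t))$, giving $t\,D'(t)\simeq D(t)$ with a lower constant strictly exceeding $1$ because $D'(t) \ge D(t)/t$ trivially plus a genuinely positive correction. This uniformity in $x$ is exactly what makes the final constants $c_1,c_2,c_3$ independent of $x$.

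With $t\,D'(t)\simeq D(t)$ in hand, the three estimates drop out. For \eqref{e:02m}: $|\Lambda| \le \max\bigl(1, (a_2-1)D(t)/(D(t)+N)\bigr) \le \max(1,a_2-1) =: c_1$, and the lower-boundedness needed is not required for the absolute value bound. For \eqref{e:04m1}: when $\Phi(r)\le c_* t$ we want $N$ to be a small fraction of $D(t)$; since $N = \Phi(r)V(x,r)$ and $D(t) = t\,V(x,\Phi^{-1}(t))$, the hypothesis $\Phi(r)\le c_* t$ forces $r \le \Phi^{-1}(c_* t)$, and by the weak scaling \eqref{e:Phii} of $\Phi^{-1}$ and \eqref{vd1} one gets $N/D(t) \le C c_*^{\theta}$ for some exponent $\theta>0$; choosing $c_*$ small makes $N/D(t)$ as small as desired, so $\Lambda = 1 - \frac{t D'(t) - N}{D(t)+N} \le 1 - \frac{(a_1-1)D(t) + (\text{stuff}) - N}{D(t)+N}$ becomes negative and bounded away from $0$. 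Symmetrically, for \eqref{e:04m2}: when $\Phi(r)\ge c^* t$ with $c^*$ large, $N/D(t)$ is large, so the numerator $(D(t)+N) - tD'(t) \ge (D(t)+N) - a_2 D(t) = N - (a_2-1)D(t)$ is a positive fraction of $D(t)+N$, giving $\Lambda \ge c_3 > 0$. The main obstacle I anticipate is the first step: justifying $t\,D'(t)\simeq D(t)$ cleanly and uniformly in $x$, i.e.\ pushing the scaling inequalities \eqref{vd1} and \eqref{e:Phii} through the derivative (which a priori is only a generalized derivative) — this is where the reduction via Lemma \ref{p:dd1} to genuinely differentiable $V(x,\cdot)$ and $\Phi$ satisfying \eqref{e:Phiinvesd}-type identities is essential, and care is needed to track that the comparison constants never depend on $x$.
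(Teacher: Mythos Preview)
Your approach is essentially the same as the paper's: the paper also computes the derivative of $\bar q = t/(D(t)+N)$ directly, observes that the key comparison is $t^2\,\partial_r V(x,\Phi^{-1}(t))(\Phi^{-1})'(t) \simeq t\,V(x,\Phi^{-1}(t))$ (equivalently your $tD'(t)\simeq D(t)$ with lower constant strictly exceeding $1$) via \eqref{e:Phiinvesd}, and then reads off all three inequalities from the resulting two-sided bound on the factor you call $\Lambda$. One small slip: your displayed identity ``$\Lambda = 1 - \frac{tD'(t)-N}{D(t)+N}$'' is off (the correct form is $\Lambda = \frac{N - (tD'(t)-D(t))}{D(t)+N}$), but this does not affect the argument.
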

\begin{proof}
By elementary calculations, we have
\begin{align*} &\frac{\partial \bar q(t,x, r)}{\partial t}\\
  &=\frac{(t V(x,\Phi^{-1}(t)) + \Phi (r) \, V(x,r))-t[V(x,\Phi^{-1}(t))+t \partial_r V(x,\Phi^{-1}(t))(\Phi^{-1}(t))']}{(t V(x,\Phi^{-1}(t)) + \Phi (r) \, V(x,r))^2}\\
 &=\frac{ \Phi (r) \, V(x,r)-t^2 \partial_r V(x,\Phi^{-1}(t))(\Phi^{-1}(t))'}{(t V(x,\Phi^{-1}(t)) + \Phi (r) \, V(x,r))^2}\\
  &=\frac{\bar q(t,x, r)}{t}\left(\frac{ \Phi (r) \, V(x,r)-t^2 \partial_r V(x,\Phi^{-1}(t))(\Phi^{-1}(t))'}{t V(x,\Phi^{-1}(t)) + \Phi (r) \, V(x,r)} \right).
\end{align*}
Since
$t^2 \partial_r V(x,\Phi^{-1}(t))(\Phi^{-1}(t))'  \simeq tV(x,\Phi^{-1}(t))$ by \eqref{e:Phiinvesd},
we have
\begin{align*}
 \frac{\bar q(t,x, r)}{t}&\left( \frac{ \Phi (r) \, V(x,r)-c_1 tV(x,\Phi^{-1}(t))}{t V(x,\Phi^{-1}(t)) + \Phi (r) \, V(x,r)}\right) \\
 &\le
\frac{\partial \bar q(t,x, r)}{\partial t} \le  \frac{\bar q(t,x, r)}{t}\left( \frac{ \Phi (r) \, V(x,r)-c_2 tV(x,\Phi^{-1}(t))}{t V(x,\Phi^{-1}(t)) + \Phi (r) \, V(x,r)}\right). \end{align*}  Thus, the desired assertion follows from the estimate above.
   \end{proof}

\subsection{Two-sided estimates for $p(t,x,y)$}\label{s:4.2}

Recall that for $t>0$ and $x, y\in M$,
$$p(t,x,y)=\int_0^\infty q(r, x, y) \, d_r \bP (E_t \leq r) = \int_0^\infty  q(r,x,y) \,d_r \bP(S_r\geq t)  .
$$

\begin{proof}[Proof of Theorem $\ref{theorem:mainjump}$] Throughout the proof, we fix $x,y\in M$. By \eqref{e:4.7}, \begin{equation*}\label{eq:nroo3}
p(t,x,y)=\int_0^\infty q(r,x,y)\,d_r\bP (S_r\ge t)
\simeq
\int_0^\infty   \bar q(r, x, d(x, y))\,d_r\bP (S_r\ge t).
\end{equation*} Then,
for $t>0$ and $x, y\in M$,
\begin{equation}\label{e:07} \begin{split}
p(t,x, y )\simeq
&\int_0^{2/\phi(t^{-1})} \bar q(r,x, d(x, y)) \,d_r\bP (S_r\ge t) \\
&-\int_{2/\phi(t^{-1})}^\infty  \bar q(r,x, d(x,y)) \,d_r \bP (S_r\le  t) \\
=&:I_1+I_2.\end{split}\end{equation}

For simplicity, in the following we fix $x\in M$ and let $z=d(x, y)$.
 Then by definition, $\bar q(t, x, d(x,y))=\bar q(t, x, z)$. We also write $\bar q(t,x,z)$ and $V(x,r)$ as $\bar q(t,z)$ and $V(r)$, respectively.
The proof is divided into two parts.

\medskip

\noindent{\bf Proof of the upper bound of $p(t,x,y)$. }\,
For $I_1$, since $\bP (S_0\ge t)=0$ for $t>0$ and $\bar q(0,\cdot)=\delta_{\{0\}}$ (this is understood in the usual way and $\delta_{\{0\}}$ is the Dirac measure at
 the point 0), we have
by Proposition \ref{p:sub} (i) and \eqref{e:02m}
\begin{equation}\label{e:07-1} \begin{split}
I_1=&
\bar q(r, z)
\bP (S_r\ge t) \big|_0^{2/\phi(t^{-1})} - \int_0^{2/\phi(t^{-1})} \bP (S_r\ge t)\,d_r \bar q(r, z)\\
\le & c \bar q(2/\phi(t^{-1}),
z) - \int_0^{2/\phi(t^{-1})} \bP (S_r\ge t)\,d_r \bar q(r, z)\\
\le &c \bar q(2/\phi(t^{-1}),
z)+c\int_0^{2/\phi(t^{-1})} r\phi (t^{-1}) \cdot \frac{1}{r} \cdot \bar q(r, z)\,dr\\
=&\!\!:c \bar q(2/\phi(t^{-1}),z)+c I_{1,1}.
\end{split}
\end{equation}
For $I_2$, since
$\bar q(\infty, z)=0$,
\begin{equation}\label{e:07-2} \begin{split}I_2=&-\int_{2/\phi(t^{-1})}^\infty  \bar q(r, z) \, d_r \bP (S_r\le t)\\
=&- \bar q(r, z)\bP (S_r\le t)|_{2/\phi(t^{-1})}^\infty +\int_{2/\phi(t^{-1})}^\infty \bP (S_r\le t)\,d_r \bar q(r, z)\\
\le &c \bar q(2/\phi(t^{-1}),z)+ c\int_{2/\phi(t^{-1})}^\infty  \exp(-c_1
 t (\phi')^{-1}(t/r)) \cdot \frac{1}{r} \cdot \bar q(r,z)\,dr\\
=&\!\!:c \bar q(2/\phi(t^{-1}),z)+ c I_{2,1},
\end{split}\end{equation}
where in
the inequality above we used Proposition \ref{p:sub} (ii) and \eqref{e:02m}. Therefore, in order to get upper bound of $p(t,x,y)$,
we need to derive upper bound
for $I_{1,1}$ and $I_{2,1}$.

\noindent
{\bf(1-a)}  Suppose that $\Phi(z)\phi(t^{-1})\le 1$. Then
by \eqref{e:4.6}
\begin{align*}I_{1,1} &\le c
\phi(t^{-1}) \left(\frac{1}{ V(z) \Phi(z)} \int_0^{\Phi(z)}r\,dr+  \int_{\Phi(z)}^{2/\phi(t^{-1})} \frac 1{V(\Phi^{-1}(r))}\, dr \right)\\
&=  c\phi(t^{-1})\left(\frac{\Phi(z)}{V(z)} +\int_{\Phi(z)}^{2/\phi(t^{-1})} \frac 1{V(\Phi^{-1}(r))} \,dr\right)\\
&\le c \phi(t^{-1})\int_{\Phi(z)}^{2/\phi(t^{-1})}\frac 1{V(\Phi^{-1}(r))} \,dr,
\end{align*} where in the last inequality we used the fact that
\begin{equation}\label{e:rrffee1}\int_{\Phi(z)}^{2/\phi(t^{-1})}\frac 1{V(\Phi^{-1}(r))} \,dr\ge \int_{\Phi(z)}^{2\Phi(z)}\frac 1{V(\Phi^{-1}(r))} \,dr \ge c \,\frac{\Phi(z)}{V(z)}.\end{equation}

By changing the variable $s=r\phi(t^{-1})$ and using \eqref{vd1} and \eqref{e:Phii}, we find that
\begin{equation}\label{e:com1}\begin{split}
&\phi(t^{-1}) \int_{\Phi(z)}^{2/\phi(t^{-1})} \frac 1{V(\Phi^{-1}(r))} \,dr\\
&=\int_{ \Phi(z) \phi(t^{-1})}^{2 } \frac 1{V(\Phi^{-1}( s/\phi(t^{-1})))} \,ds\\
&=
\frac{1}{V(\Phi^{-1}(1/\phi(t^{-1})))}
\int_{ \Phi(z) \phi(t^{-1})}^{2 }
\frac{V(\Phi^{-1}(1/\phi(t^{-1})))}{V(\Phi^{-1}( s/\phi(t^{-1})))}
\,ds\\
&\ge
\frac{c}{V(\Phi^{-1}(1/\phi(t^{-1})))}
\int_{ 1}^{2 } s^{-d_2/\alpha_1} \,ds\\
&\ge
\frac{c}{V(\Phi^{-1}(1/\phi(t^{-1})))}\ge c \bar q(2/\phi(t^{-1}),z).
\end{split}
\end{equation}
Hence
\begin{align*}I_1\le& c \phi(t^{-1}) \int_{\Phi(z)}^{2/\phi(t^{-1})} \frac 1{V(\Phi^{-1}(r))}
\,dr.\end{align*}

\noindent
{\bf(1-b)} If $\Phi(z)\phi(t^{-1})\ge 1$, then
by \eqref{e:4.6}
\begin{align*}I_{1,1}&\le c\phi(t^{-1})
\int_0^{2/\phi({t^{-1}})} \bar q(r,z)\,dr\le\frac{c\phi({t^{-1}})}{V(z) \Phi(z)}
\int_0^{2/\phi({t^{-1}})} r\,dr\le \frac{c}{ \phi({t^{-1}}) V(z) \Phi(z)}.
\end{align*}
Since
by \eqref{e:4.6} again
$$ \bar q(2/\phi(t^{-1}),z) \leq  \frac{c}{ V(z) \Phi(z)\phi({t^{-1}})},$$
we obtain $$I_1 \le  \frac{c}{\phi({t^{-1}}) V(z) \Phi(z)}.$$

\noindent
{\bf(2-a)} If  $\Phi(z)\phi(t^{-1})\le 1$, then
by changing variable $s=r \phi(t^{-1})$,
and using
 \eqref{e:4.6}, \eqref{vd1}, \eqref{e:Phii},  \eqref{e:04} and \eqref{e:03},
\begin{equation}\label{com1-100}\begin{split}
I_{2,1}&\le c\int_{2/\phi(t^{-1})}^\infty \exp(-c_1 t (\phi')^{-1}(t/r)) \cdot r^{-1}\cdot
{\bar q}(r,z)\,dr\\
&\le c  \int_{2/\phi(t^{-1})}^\infty \exp(-c_1 t (\phi')^{-1}(t/r)) \cdot r^{-1} \cdot \frac 1{V(  \Phi^{-1}(r))} \,dr\\
&=c  \int_{2}^\infty \exp(-c_1 t (\phi')^{-1}(t\phi(t^{-1})/s))  \cdot s^{-1}  \cdot \frac 1{V(\Phi^{-1}(s/ \phi(t^{-1})))} \,ds\\
& \le \frac{c}{V(\Phi^{-1}(1/\phi(t^{-1})))}\\
&\quad \times \int_2^\infty \exp(-c_1 t(\phi')^{-1}(\phi'(t^{-1})/s))\cdot
\frac{V(\Phi^{-1}(1/\phi(t^{-1})))} {V(\Phi^{-1}(s/ \phi(t^{-1})) ) }\cdot s^{-1}\, ds\\
& \le \frac{c}{V(\Phi^{-1}(1/\phi(t^{-1})))}\int_2^\infty \exp(-c_1 t(\phi')^{-1}(\phi'(t^{-1})/s))\,s^{-((d_1/\alpha_2)+1)}\,ds \\
&= \frac{c}{V(\Phi^{-1}(1/\phi(t^{-1})))}\\
&\quad \times\sum_{n=1}^\infty\int_{2^n}^{2^{n+1}} \exp(-c_1 t(\phi')^{-1}(\phi'(t^{-1})/s))\,s^{-((d_1/\alpha_2)+1)}\,ds\\
&\le\frac{c}{V(\Phi^{-1}(1/\phi(t^{-1})))}\sum_{n=1}^\infty \exp(-c_1 t(\phi')^{-1}(\phi'(t^{-1})/2^n))\,2^{-n((d_1/\alpha_2)+1)}\\
&\le \frac{c}{V(\Phi^{-1}(1/\phi(t^{-1})))}\sum_{n=1}^\infty \exp(-c_2 2^{n(1-\beta_2)})\,2^{-n((d_1/\alpha_2)+1)}\\
&\le \frac{c}{V(\Phi^{-1}(1/\phi(t^{-1})))}. \end{split}\end{equation}
Thus
 $$I_2\le  \frac{c}{V(\Phi^{-1}(1/\phi(t^{-1})))}.$$

\noindent
{\bf(2-b)}  Next, we suppose that  $\Phi(z)\phi(t^{-1})\ge 1$.
Following the same argument as
\eqref{com1-100}, we find that
\begin{align*}I_{2,1}&\le c  \int_{2/\phi(t^{-1})}^\infty \exp(-c_1 t (\phi')^{-1}(t/r)) \cdot r^{-1}\cdot {\bar q}(r,z)\,dr\\
&\le   \frac{c}{\Phi(z)V(z)} \int_{2/\phi(t^{-1})}^\infty \exp(-c_1 t (\phi')^{-1}(t/r))\,
dr\\
&\le  \frac{c}{\phi(t^{-1}) \Phi(z)V(z)}   \int_2^\infty \exp(-c_1 t(\phi')^{-1}(\phi'(t^{-1})/s))\,ds\le \frac{c}{\phi(t^{-1}) \Phi(z)V(z)}. \end{align*}
Thus,
$$I_2 \le \frac{c}{ \phi({t^{-1}})V(z) \Phi(z)}.$$

Combining all the estimates above, we have proved the desired upper bounded estimates for $p(t,x,y).$

\medskip

\noindent
{\bf Proof of the lower bound of $p(t,x,y)$.}\,\,
{\bf(1)} Assume that $\Phi(z)\phi(t^{-1})\le 1$. By \eqref{e:02m} and \eqref{e:04m1}, we can find a constant $c_1>1$ such that when $c_1 \Phi(z)\le r,$
$$ \frac{\partial {\bar q}(r,z) }{\partial r} \le -\frac{c_2}{r} {\bar q}(r,z) \le -\frac{c_3}{ rV(\Phi^{-1}(r))};$$ and when  $ 0< r\le c_1 \Phi(z)$,
$$  \frac{\partial {\bar q}(r,z) }{\partial r} \le  \frac{c_4}{r}{\bar q}(r,z).$$
Then since
 $$\int_{r_0}^\infty  {\bar q}(r,z) \,d_r \bP (S_r\le  t)   \le 0,\quad r_0>0,$$  according to the arguments of \eqref{e:07} and \eqref{e:07-1}, we have
\begin{align*}
p(t, x, y )\ge & c_5
\int_0^{2c_1/\phi(t^{-1})} \bar q(r,z) \,d_r\bP (S_r\ge t)\\
\ge &
-c_5 \int_{c_1 \Phi(z)}^{2c_1/\phi(t^{-1})} \bP (S_r\ge t)\,d_r \bar q(r, z)
-c_5 \int^{c_1 \Phi(z)}_0\bP (S_r\ge t)\,d_r \bar q(r, z)\\
\ge & c_6\phi(t^{-1}) \int_{c_1 \Phi(z)}^{2c_1/\phi(t^{-1})} \frac 1{V(\Phi^{-1}(r))}\,dr-\frac{c_7 \phi(t^{-1})}{V(z)\Phi(z)}\int^{c_1 \Phi(z)}_0     r\,dr\\
=&:I_{1,1}-I_{1,2}.\end{align*}
Noting that
\begin{align*}I_{1,2}
&\le  \frac{c_7c_1^2\phi(t^{-1})\Phi(z) }{2V(z)},
\end{align*}
and changing the variable $s=r\phi(t^{-1})$, we have
\begin{align*} p(t,x,y)
\ge  &c_6\int_{c_1 \Phi(z) \phi(t^{-1})}^{2c_1} \frac 1{V(\Phi^{-1}( s/\phi(t^{-1})))}\, ds - \frac{c_7c_1^2\phi(t^{-1})\Phi(z) }{2V(z)}.
 \end{align*}
Combining this with Lemma \ref{L:lower1m}  yields
\begin{align*} (c_*+1)p(t,x,y)
\ge  &
c_*c_6\int_{c_1 \Phi(z) \phi(t^{-1})}^{2c_1} \frac 1{V(\Phi^{-1}( s/\phi(t^{-1})))}\, ds\\
& - \frac{c_*c_7c_1^2\phi(t^{-1})\Phi(z) }{2V(z)}+\frac{c_8\phi(t^{-1})\Phi(z) }{V(z)}\\
\ge & c_*c_6\int_{c_1 \Phi(z) \phi(t^{-1})}^{2c_1} \frac 1{V(\Phi^{-1}( s/\phi(t^{-1})))}\, ds\\
\ge & c_9\int_{\Phi(z) \phi(t^{-1})}^{2} \frac 1{V(\Phi^{-1}( s/\phi(t^{-1})))}\, ds,
 \end{align*}
 where $c_*>0$ is chosen small enough so that $c_*c_7c_1^2<2c_8$.

\noindent
{\bf(2)} Next we assume that $\Phi(z)\phi(t^{-1})\ge 1$. Due to
\eqref{e:4.6},  \eqref{e:02m} and  \eqref{e:04m2}
we can find a constant $0<c_1<1$ such that when $ r \le c_1 \Phi(z),$
$$  \frac{\partial {\bar q}(r,z) }{\partial r} \ge  \frac{c_2}{r}{\bar q}(r,z);$$
 when $ r\ge c_1 \Phi(z)$,
$$
 \frac{\partial {\bar q}(r,z) }{\partial r} \ge  - \frac{c_3}{rV(\Phi^{-1}(r))}.
$$
Then according to \eqref{e:04} and the arguments of \eqref{e:07} and \eqref{e:07-2}, we know that
\begin{align*}p(t,x, y)
\ge & -c_4
\int_{c_1/(2\phi(t^{-1}))}^\infty \bar q(r, z) \,d_r\bP (S_r\le t)\\
\ge & c_4
\int^{c_1 \Phi(z)}_{c_1/(2\phi(t^{-1}))}
\bP (S_r\le t)\,d_r \bar q(r, z) +c_4
\int_{c_1 \Phi(z)}^\infty \bP (S_r\le t)\,d_r \bar q(r, z)
\\
\ge & \frac{ c_5}{V(z) \Phi(z)}\int^{c_1 \Phi(z)}_{c_1/(2\phi(t^{-1}))}  \exp(-c_6t (\phi')^{-1}(t/r))  \,dr\\
&-c_7\int_{c_1 \Phi(z)}^\infty \frac 1{rV(\Phi^{-1}(r))}\exp(-c_8t (\phi')^{-1}(t/r))\,dr\\
=&:I_{2,1}-I_{2,2}. \end{align*}

Changing the variable $s=r\phi(t^{-1})$ and using \eqref{e:04} and \eqref{e:03}, we have
\begin{equation}\label{e:I21}\begin{split}I_{2,1}&\ge   \frac {c_9}{\phi(t^{-1})V(z) \Phi(z)} \int_{c_1/2}^{c_1 \Phi(z) \phi(t^{-1})}  \exp(-c_6t(\phi')^{-1}(t\phi(t^{-1})/s))\,ds \\
&\ge \frac {c_9}{\phi(t^{-1})V(z) \Phi(z)} \int_{c_1/2}^{c_1 }  \exp(-c_6t(\phi')^{-1}(t\phi(t^{-1})/s))\,ds \\
&\ge \frac {c_9}{\phi(t^{-1})V(z) \Phi(z)} \exp(-c_6t(\phi')^{-1}(t\phi(t^{-1})/c_1))\ge  \frac {c_{10}} {\phi(t^{-1})V(z) \Phi(z)}
,\end{split} \end{equation}where we used
\eqref{e:04} and \eqref{e:03} in
the last  inequality above.
Here, we observe that by \eqref{vd1} and \eqref{e:Phii}
\begin{equation}\begin{split}\label{e:I21n}
\frac {1} {\phi(t^{-1})V(z) \Phi(z)}
 &= \frac 1{V(\Phi^{-1}(1/\phi(t^{-1})))}
\frac {1}{ \Phi(z)\phi(t^{-1})} \frac{V(\Phi^{-1}(1/\phi(t^{-1})))}{V(\Phi^{-1}(\Phi (z)))}\\
&\ge \frac{c_{11}}{V(\Phi^{-1}(1/\phi(t^{-1})))} \frac 1{ (\Phi(z)\phi(t^{-1}))^{1+d_2/\alpha_1}}.
\end{split}\end{equation}
On the other hand, changing the variable $s=r\phi(t^{-1})$ we have
\begin{equation}\label{e:I22n}\begin{split}
I_{2,2} & =  \frac{ c_7}{V(\Phi^{-1}(1/\phi(t^{-1})))}\\
&\quad \times
\int_{c_1 \Phi(z) \phi(t^{-1})}^\infty
\frac{V(\Phi^{-1}(1/\phi(t^{-1})))}
{V(\Phi^{-1}(s/\phi(t^{-1}) ))}\,
 s^{-1}\, \exp(-c_8t(\phi')^{-1}(t\phi(t^{-1})/s))\,ds \\
 &\le \frac{c_{12}}{V(\Phi^{-1}(1/\phi(t^{-1})))} \\
 &\quad\times\int_{c_1 \Phi(z) \phi(t^{-1})}^\infty
 s^{-1-d_1/\alpha_2} \exp(-c_8t(\phi')^{-1}(t\phi(t^{-1})/s))\,ds \\
&\le \frac{c_{13}}{V(\Phi^{-1}(1/\phi(t^{-1})))} \frac1{(\Phi(z) \phi(t^{-1}))^{d_1/\alpha_2}} \\
&\quad \times \exp(-c_{8}t(\phi')^{-1}(t\phi(t^{-1})/(\Phi(z) \phi(t^{-1}))))
\\
&\le \frac{c_{14}}{V(\Phi^{-1}(1/\phi(t^{-1})))}
\exp\Big(-c_{15}(\Phi(z) \phi(t^{-1}))^{1/(1-\beta_2)}\Big),
\end{split}\end{equation}
where we used \eqref{vd1} and \eqref{e:Phii} again
in the first inequality, and used
\eqref{e:04} and \eqref{e:03} in the last inequality.
From \eqref{e:I21}--\eqref{e:I22n} we  can choose a constant $c^*>0$ large enough such that for all
$t>0$ and $z\ge 0$ with
$\Phi(z)\phi(t^{-1}) >c^*$, it holds that
$$I_{2,2} \le 2^{-1} I_{2,1}.$$ Thus,
$$
p(t,x, y) \ge
  \frac {2^{-1} c_{10}}{\phi(t^{-1})V(z)\Phi(z)}.
 $$
Moreover, if $c^* \ge \Phi(z)\phi(t^{-1}) \ge 1$, we see from Lemma \ref{L:lower1m}  that
$$
p(t,x, y) \ge
   \frac {c_{16}}{\phi(t^{-1})V(z)\Phi(z)}.
$$
Therefore,  when  $\Phi(z)\phi(t^{-1}) \ge 1$,
$$p(t,x,y)\ge
  \frac {c_{17}}{\phi(t^{-1}) V(z)\, \Phi(z)}.
 $$
This completes the proof.
\end{proof}

\section{Local spatial motions} \label{S:5}

\subsection{Time derivative of heat kernel estimates for diffusion processes}

 In this section, we consider the diffusion case where the associated heat kernel $q(t,x,y)$ satisfies
\eqref{eq:fibie3}.
In the following, set
$$\bar q( t,x, r):= \frac{1}{V(x,\Phi^{-1}(t))}\exp\left(-m(t,r)\right),\quad t,r>0 \hbox{ and } x\in M.$$
Applying Lemma \ref{p:dd1} to $V(x,\cdot)$, $\Phi(\cdot)$ and $1/m(\cdot, r)$,
we may and do
assume that all $V(x,\cdot)$, $\Phi(\cdot)$ and $m(\cdot, r)$ are differentiable, that
$V(x,\cdot)$ and $\Phi(\cdot)$ satisfy the property like
\eqref{e:Phiinvesd},
 and  that $m(\cdot, r)$ satisfies
\begin{align}
\label{e:diffm}
m(t,r)\simeq -t \frac{\partial m(t, r)}{\partial t} \quad \text{ for all }t, r >0.
\end{align}
Then  similar to those in Lemma $\ref{L:gd}$, we have the following time derivative estimates for $\bar q( t,x, r)$ defined above.
\begin{lemma}\label{L:gdd}
Under all assumptions above, there exist constants $c_0, c_0^*>0$ such that
for all $t,r>0$ and $x\in M$,
\begin{equation}\label{e:02m1}
\left|\frac{\partial \bar q(t,x, r)}{\partial t}\right|\le \frac{c_0}{t V(x,\Phi^{-1}(t))}\exp\left(-c_0^*m(t,r)\right)=:\frac{c_0}{t}  {\bar q}^*(t,x, r) ,
\end{equation}
and that there exist
constants $c_1,c_2>0$, $c_*\in(0,1)$ and $c^*\in (1,\infty)$ such that for all $x\in M$,
\begin{align}\label{e:04m11}
\frac{\partial \bar q(t,x, r)}{\partial t} \le -c_1
 \frac{\bar q(t,x, r)}t \quad \text{if }\,\,\Phi(r) \le c_* t,
\end{align}
and
\begin{align}\label{e:04m21}
\frac{\partial \bar q(t,x, r)}{\partial t}\ge c_2 \frac{\bar  q(t,x, r)}t \quad \text{if }\,\,  \Phi(r) \ge c^* t.
\end{align}
\end{lemma}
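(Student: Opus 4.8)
The plan is to mirror the proof of Lemma~\ref{L:gd}: compute $\partial_t \bar q(t,x,r)$ in closed form, observe that it splits into a contribution from the volume factor $V(x,\Phi^{-1}(t))^{-1}$ and one from the exponent $e^{-m(t,r)}$, estimate each using the regularity already granted in the paragraph preceding the lemma, and then read off the three assertions by comparing the relative sizes of the two terms.

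First I would differentiate. Since $\bar q(t,x,r)=V(x,\Phi^{-1}(t))^{-1}e^{-m(t,r)}$, the chain rule gives
\[
\frac{\partial \bar q(t,x,r)}{\partial t}=-\,\bar q(t,x,r)\left(\frac{\frac{d}{dt}V(x,\Phi^{-1}(t))}{V(x,\Phi^{-1}(t))}+\frac{\partial m(t,r)}{\partial t}\right).
\]
By the differentiability reductions obtained via Lemma~\ref{p:dd1} together with the comparison \eqref{e:04} (exactly as in the displayed computation at the end of Subsection~\ref{section31}), one has $\frac{d}{dt}V(x,\Phi^{-1}(t))/V(x,\Phi^{-1}(t))\simeq 1/t$ with constants independent of $x$, while \eqref{e:diffm} gives $-\partial_t m(t,r)\simeq m(t,r)/t$. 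Hence there are positive constants $a_1\le a_2$ and $a_3\le a_4$ such that, for all $t,r>0$ and $x\in M$,
\[
\frac{\bar q(t,x,r)}{t}\bigl(a_3\,m(t,r)-a_2\bigr)\;\le\;\frac{\partial \bar q(t,x,r)}{\partial t}\;\le\;\frac{\bar q(t,x,r)}{t}\bigl(a_4\,m(t,r)-a_1\bigr).
\]

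From this two-sided bound the rest is bookkeeping. For \eqref{e:02m1} I would use $|\partial_t \bar q|\le \frac{\bar q(t,x,r)}{t}(a_4 m(t,r)+a_2)=\frac1t V(x,\Phi^{-1}(t))^{-1}e^{-m(t,r)}(a_4 m(t,r)+a_2)$ together with the elementary inequality $e^{-s/2}(a_4 s+a_2)\le c_0$ for all $s\ge0$, which absorbs the polynomial factor into a weaker exponential and produces $\frac{c_0}{t}\,{\bar q}^*(t,x,r)$ with $c_0^*=1/2$. For \eqref{e:04m11} and \eqref{e:04m21} the point is to control $m(t,r)$ by the ratio $\Phi(r)/t$: combining \eqref{e:mphisim1} with \eqref{e:scdf-1} yields $m(t,r)\le c\,(\Phi(r)/t)^{1/(\alpha_2-1)}$ whenever $\Phi(r)\le t$, and $m(t,r)\ge c'\,(\Phi(r)/t)^{1/(\alpha_2-1)}$ whenever $\Phi(r)\ge t$. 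Choosing $c_*$ small enough forces $a_4 m(t,r)\le a_1/2$ on $\{\Phi(r)\le c_* t\}$, whence $\partial_t \bar q\le -\tfrac{a_1}{2}\bar q/t$; choosing $c^*$ large enough forces $a_3 m(t,r)-a_2\ge 1$ on $\{\Phi(r)\ge c^* t\}$, whence $\partial_t \bar q\ge \bar q/t$. This gives \eqref{e:04m11} and \eqref{e:04m21} with $c_1=a_1/2$ and $c_2=1$.

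The only point requiring care is the verification of the comparisons $\frac{d}{dt}V(x,\Phi^{-1}(t))/V(x,\Phi^{-1}(t))\simeq 1/t$ and $-\partial_t m(t,r)\simeq m(t,r)/t$ with constants uniform in $x$; but these are precisely the regularizations furnished by applying Lemma~\ref{p:dd1} to $V(x,\cdot)$, $\Phi(\cdot)$ and $1/m(\cdot,r)$ together with \eqref{e:04}, as already invoked before the statement of the lemma. Given those, no further obstacle arises, the argument being a line-by-line analogue of the proof of Lemma~\ref{L:gd} with the role of $\Phi(r)V(x,r)$ played here by $e^{m(t,r)}$.
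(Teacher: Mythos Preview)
Your proposal is correct and follows essentially the same approach as the paper's own proof: both compute $\partial_t\bar q=\bar q\,(-\partial_t V/V-\partial_t m)$, invoke the regularizations from Lemma~\ref{p:dd1} together with \eqref{e:diffm} to obtain a two-sided bound of the form $\frac{\bar q}{t}(c\,m(t,r)-c')$, absorb the polynomial factor $m(t,r)$ into a weakened exponential via an elementary inequality for \eqref{e:02m1}, and then use \eqref{e:mphisim1} with \eqref{e:scdf-1} to make $m(t,r)$ small (resp.\ large) when $\Phi(r)/t$ is small (resp.\ large). The only cosmetic difference is that the paper phrases the elementary step as $re^{-r}\le 2e^{-r/2}$ while you write $e^{-s/2}(a_4 s+a_2)\le c_0$, which amounts to the same thing.
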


\begin{proof}
Since
$$\frac{\partial \bar q(t,x,r)}{\partial t}=\bar q(t,x,r)\left(-\frac{\partial_r V(x,\Phi^{-1}(t))(\Phi^{-1}(t))'}{V(x,\Phi^{-1}(t))}-\partial_t m(t,r)\right),$$
 we have
  by \eqref{e:Phiinvesd} and \eqref{e:diffm}
\begin{align}\label{ppffee}
\frac{\bar q(t,x,r)}{t}\left(-c_1+c_2m(t,r)\right) \le \frac{\partial \bar q(t,x,r)}{\partial t}\le  \frac{\bar q(t,x,r)}{t}\left(-c_1^{-1}+c_2^{-1}m(t,r)\right).\end{align}
This along with the fact that $r e^{-r}\le 2 e^{-r/2}$ for all $r>0$ immediately yields \eqref{e:02m1}.

Note that,
by \eqref{e:mphisim1}, $m(\Phi(r),r)\simeq1$. If $\Phi(r) \le c_* t$, then by \eqref{e:scdf-1},
$$\frac{1}{m(t,r)}\simeq \frac{m(\Phi(r),r)}{m(t,r)}\ge c_3\left(\frac t{\Phi(r)}\right)^{1/(\alpha_2-1)}\ge c_3\left(\frac{1}{c_*}\right)^{1/(\alpha_2-1)}$$ and so
$$m(t,r)\le c_4 {c_*}^{1/(\alpha_2-1)}.$$ By this and \eqref{ppffee}, we can take $c_*>0$ small enough such that \eqref{e:04m11} is satisfied.

Similarly,  if $\Phi(r) \ge c^* t$ for $c^*>1$ large enough, then $m(t,r)\ge 2c_1/c_2$ and so
$$-c_1+c_2m(t,r)\ge c_1,$$
which combined with \eqref{ppffee} in turn gives us \eqref{e:04m21}. The proof is complete.
\end{proof}

\subsection{Two-sided estimates for $p(t,x,y)$}

\begin{proof}[Proof of Theorem $\ref{theorem:maindiff}$]
We will closely follow the approach of Theorem \ref{theorem:mainjump} but need to carry out some non-trivial modifications.
We fix $x \in M$ and, for simplicity, we again
denote $z=d(x, y)$, and write $\bar q(t,x,z)$,
$\bar q^*(t,x,z)$ and $V(x,r)$ as $\bar q(t,z)$, $\bar q^*(t,z)$ and $V(r)$, respectively.
The proof is divided into two parts again.

\medskip

\noindent{\bf Proof of the upper bound of $p(t,x,y)$. }\,\,
By \eqref{e:1.5} and \eqref{eq:fibie3}
we have
$$
p(t, x, y) \asymp \int_0^\infty  \bar q(r,z)\, d_r \bP(S_r\geq t) =I_1+I_2,
$$
where
$$
I_1:=\int_0^{2/\phi(t^{-1})}
\bar q(r, z)
 \,d_r \bP (S_r\ge t) \quad \text{and} \quad
I_2:=-\int_{2/\phi(t^{-1})}^\infty
\bar q(r, z)
\,d_r \bP (S_r\le  t). $$
Following the same arguments  as \eqref{e:07-1} and \eqref{e:07-2}, and using Proposition \ref{p:sub} and \eqref{e:02m1}, we have
\begin{equation*}\label{e:07m-1} \begin{split}
I_1
\le & c{\bar q}(2/\phi(t^{-1}),z) - \int_0^{2/\phi(t^{-1})} \bP (S_r\ge t)\,d_r {\bar q}(r,z)\\
\le &c {\bar q}(2/\phi(t^{-1}),z)+c\int_0^{2/\phi(t^{-1})} r\phi (t^{-1}) \cdot \frac{1}{r} \cdot {\bar q}^*(r,z)\,dr\\
=&\!\!:c {\bar q}(2/\phi(t^{-1}), z)+c I_{1,1}\end{split}
\end{equation*}
and \begin{align*}I_2
=&-  {\bar q}(r,z )\bP (S_r\le t) \big|_{2/\phi(t^{-1})}^\infty +\int_{2/\phi(t^{-1})}^\infty \bP (S_r\le t)\,d_r {\bar q}(r,z)\\
\le & c{\bar q}(2/\phi(t^{-1}),z)+ c\int_{2/\phi(t^{-1})}^\infty \exp(-c_1 t (\phi')^{-1}(t/r)) \cdot \frac 1 r \cdot {\bar q}^*(r,z)\,dr\\
=&\!\!:c{\bar q}(2/\phi(t^{-1}),z)+ c I_{2,1}. \end{align*}

{\bf (1-a)} Suppose that $\Phi(z)\phi(t^{-1})\le 1$. Then
\begin{align*}I_{1,1}&=\phi(t^{-1})\int_0^{\Phi(z)} \frac{1}{V(\Phi^{-1}(r))} \exp(-c_0^*m(r,z))\,dr\\
&\quad + \phi(t^{-1})\int_{\Phi(z)}^{2/\phi(t^{-1})} \frac{1}{V(\Phi^{-1}(r))}\,dr\\
&=: I_{1,1,1}+I_{1,1,2}.\end{align*}
According to \eqref{vd1}, \eqref{e:Phii} and \eqref{e:scdf-1}, we have
\begin{equation}\label{e:PF}\begin{split} I_{1,1,1}=&\phi(t^{-1})\int_0^{\Phi(z)} \frac{1}{V(\Phi^{-1}(r))} \exp(-c_0^*m(r,z))\,dr\\
=&\phi(t^{-1})\sum_{n=0}^\infty \int_{\Phi(z)/2^{n+1}}^{\Phi(z)/2^{n}}\frac{1}{V(\Phi^{-1}(r))} \exp(-c_0^*m(r,z))\,dr\\
\le& \phi(t^{-1})\sum_{n=0}^\infty \frac{\Phi(z)/2^{n+1}}{V(\Phi^{-1}(\Phi(z)/2^{n+1}))} \exp(-c_0^*m(\Phi(z)/2^{n},z))\\
\le& \frac{c\phi(t^{-1})\Phi(z)}{V(z)} \sum_{n=0}^\infty 2^{n((d_2/\alpha_1)-1)} \exp(-c_1m(\Phi(z),z) 2^{n/(\alpha_2-1)}) \\
\le& \frac{c\phi(t^{-1})\Phi(z)}{V(z)}, \end{split}\end{equation} where in the last inequality we used the fact that $m(\Phi(z),z)\simeq 1$. This estimate along with \eqref{e:com1} and \eqref{e:rrffee1} yields that
$$I_1\le c I_{1,1,2}+cI_{1,1}\le  c_1 I_{1,1}\le  c_2 \phi(t^{-1}) \int_{\Phi(z)}^{2/\phi(t^{-1})} \frac 1{V(\Phi^{-1}(r))} \,dr.$$

{\bf (1-b)} Suppose that $\Phi(z)\phi(t^{-1})\ge 1$.  Then also by
\eqref{vd1}, \eqref{e:Phii} and \eqref{e:scdf-1}, we have
\begin{align*}I_{1,1}= &\phi(t^{-1})\int_{0}^{2/\phi(t^{-1})} \frac{1}{V(\Phi^{-1}(r))} \exp(-c_0^*m(r,z))\,dr\\
=&\phi(t^{-1})\sum_{n=0}^\infty \int_{2/(2^{n+1}\phi(t^{-1}))}^{2/(2^{n}\phi(t^{-1}))}\frac{1}{V(\Phi^{-1}(r))} \exp(-c_0^*m(r,z))\,dr\\
\le& \phi(t^{-1})\sum_{n=0}^\infty \frac{2/(2^{n+1}\phi(t^{-1}))}{V(\Phi^{-1}(2/(2^{n+1}\phi(t^{-1}))))} \exp(-c_0^*m(2/(2^{n}\phi(t^{-1})),z))\\
\le& \frac{c}{V(\Phi^{-1}(1/\phi(t^{-1})))} \sum_{n=0}^\infty 2^{n((d_2/\alpha_1)-1)} \exp\left(-c_1m(1/\phi(t^{-1}),z) 2^{n/(\alpha_2-1)}\right) \\
\le& \frac{c}{V(\Phi^{-1}(1/\phi(t^{-1})))} \exp\left(-c_2m(1/\phi(t^{-1}),z)\right), \end{align*} which implies that
$$I_1\le \frac{c}{V(\Phi^{-1}(1/\phi(t^{-1})))} \exp\left(-c_0m(1/\phi(t^{-1}),z)\right).$$

{\bf (2-a)} Suppose that $\Phi(z)\phi(t^{-1})\le 1$.
Then by
the argument of \eqref{com1-100},
\begin{align*}I_{2,1}&=\int_{2/\phi(t^{-1})}^\infty \exp(-c_1 t (\phi')^{-1}(t/r)) \cdot r^{-1}\cdot {\bar q}^*(r,z)\,dr\\
&\le c  \int_{2/\phi(t^{-1})}^\infty \exp(-c_1 t (\phi')^{-1}(t/r)) \cdot r^{-1} \cdot \frac1 {V(  \Phi^{-1}(r))} \,dr\le \frac{c}{V(\Phi^{-1}(1/\phi(t^{-1})))},\end{align*}
hence
 $$I_2\le  \frac{c}{V(\Phi^{-1}(1/\phi(t^{-1})))}.$$

 {\bf (2-b)} We now consider the case that $\Phi(z)\phi(t^{-1})\ge 1$, which is more complex and difficult than the previous case.

To get the estimate for $I_{2,1}$, we need to consider the
following two functions inside the exponential terms of ${\bar q}^*(r,z)$ and the estimates of $\bP (S_r\le t)$ respectively:
\begin{equation}\label{o:fun}
G_1(r)=t(\phi')^{-1}(t/r) ~~\mbox{ and }~~G_2(r)=m(r,z)\end{equation}
for all $r>0$ and fixed $z,t >0$.
Note that, by
\eqref{e:03}, \eqref{e:scdf-1} and the facts that $\phi'$ and $m(\cdot,z)$ are non-increasing on $(0,\infty)$,
$G_1(r)$ is
a non-decreasing
function on $(0,\infty)$ such that $G_1(0)=0$ and $G_1(\infty)=\infty$, and $G_2(r)$ is a non-increasing function on $(0,\infty)$ such that $G_2(0)=\infty$ and
$G_2(\infty)=0$.
Thus, there is a unique
$r_0=r_0(z,t)\in (0,\infty)$
such that
$G_1(r_0)=G_2(r_0)$, $G_1(r)\ge G_2(r)$ when $r\ge r_0$, and $G_1(r)\le G_2(r)$ when $r\le r_0$.

On the other hand, when $\Phi(z)\phi(t^{-1})\ge 1$,
by \eqref{e:04}, \eqref{e:mphisim1} and the fact that
$m(\cdot, z)$ is non-increasing on $(0,\infty)$,
\begin{align*}
G_1(1/\phi(t^{-1}))&=t(\phi')^{-1}(t\phi(t^{-1})) \simeq t(\phi')^{-1}(\phi'(t^{-1}))=1\\
&\le c_1m(\Phi(z),z)\le c_1m(1/\phi(t^{-1}), z)=c_1 G_2(1/\phi(t^{-1}))\end{align*}  and
\begin{align*} G_1(\Phi(z))\ge& G_1(1/\phi(t^{-1}))\simeq t(\phi')^{-1}(\phi'(t^{-1}))=1\\
\ge & c_2m(\Phi(z), z)= c_2 G_2(\Phi(z))
\end{align*}
where constants $c_1, c_2$ are independent of $t$ and $z$.
Hence there are constants $c_3,c_4>0$ independent of $t$ and $z$ such that
\begin{equation}\label{e:ssddee} \frac{2}{\phi(t^{-1})} \le c_3 r_0\le c_4 \Phi(z).\end{equation}

Combining all the estimates above, we find that
\begin{equation*}\begin{split}I_{2,1}&=\int_{2/\phi(t^{-1})}^\infty \exp(-c_0 t (\phi')^{-1}(t/r)) \cdot r^{-1}\cdot {\bar q}^*(r,z)\,dr\\
&\le
\frac{1}{ V(\Phi^{-1}(2/\phi(t^{-1})))}\int_{2/\phi(t^{-1})}^\infty \frac{1}{ r}\cdot \exp(-c_0 t (\phi')^{-1}(t/r))\cdot \exp(-c_0^*m(r,z))\,dr\\
&\le  \frac{c_5}{V(\Phi^{-1}(1/\phi(t^{-1})))}\int_{2/\phi(t^{-1})}^{c_3r_0} \frac 1 r \cdot \exp(-c_0^*m(r,z))\,dr\\
&\quad +\frac{c_5}{V(\Phi^{-1}(1/\phi(t^{-1})))}\int_{c_3r_0}^\infty \frac{1}{r}\cdot \exp(-c_0 t (\phi')^{-1}(t/r))\,dr \\
&=:\frac{c_5}{V(\Phi^{-1}(1/\phi(t^{-1})))} I_{2,1,1}+\frac{c_5}{V(\Phi^{-1}(1/\phi(t^{-1})))} I_{2,1,2}.   \end{split}\end{equation*}

According to \eqref{e:scdf-1},
\begin{align*}I_{2,1,1}&\le \int_0^{c_3r_0}\frac{1}{r}\cdot \exp(-c_0^*m(r,z))\,dr
=\sum_{n=0}^\infty \int_{c_3r_0/(2^{n+1})}^{c_3r_0/(2^n)} \frac 1 r \cdot \exp(-c_0^*m(r,z))\,dr\\
&\le c_6 \sum_{n=0}^\infty \exp(-c_0^*m(c_3r_0/2^n,z))\le c_6\sum_{n=0}^\infty \exp\left(-c_7m(r_0,z) 2^{n/(\alpha_2-1)}\right)\\
&\le  c_6 \exp(-c_8 G_2(r_0)). \end{align*}
On the other hand, by \eqref{e:031},
\begin{align*}I_{2,1,2}&=\sum_{n=0}^\infty \int_{2^nc_3r_0}^{2^{n+1}c_3r_0} \frac{1}{r}\cdot \exp(-c_0 t (\phi')^{-1}(t/r))\,dr\\
&\le c_9\sum_{n=0}^\infty \exp(-c_0 t (\phi')^{-1}(t/(2^nc_3r_0)))\le c_9 \sum_{n=0}^\infty \exp(-c_{10}t (\phi')^{-1}(t/r_0) 2^{n(1-\beta_2)}) \\
&\le c_9 \exp(-c_{11} t (\phi')^{-1}(t/r_0))=c_9 \exp(-c_{11}G_1(r_0))= c_9 \exp(-c_{11}G_2(r_0)). \end{align*}
Putting these estimates together, we have
$$I_{2,1}\le \frac{c_{12}}{V(\Phi^{-1}(1/\phi(t^{-1})))}\exp(-c_{13}G_2(r_0)).$$ Since $G_2(r_0)=G_1(r_0)\le c_{14} G_1(1/\phi(t^{-1}))$ (thanks to \eqref{e:ssddee}), we obtain
\begin{equation}\label{e:llff} \begin{split} I_{2}\le& \frac{c_{14}}{V(\Phi^{-1}(1/\phi(t^{-1})))}\exp(-c_{15}G_2(r_0)).\end{split}\end{equation}

\ \

Next, we rewrite the exponential term in the right hand side of \eqref{e:llff}. By the fact that
$m(r_0,z)=G_2(r_0)=G_1(r_0)=t(\phi')^{-1}(t/r_0)$ and the definition of $m(r_0,z)$, we have
$$\frac{r_0}{t(\phi')^{-1}(t/r_0)}\simeq \Phi\left( \frac{z}{t(\phi')^{-1}(t/r_0)}\right).$$   Let $s_0=(\phi')^{-1}(t/r_0)$. Then $t/r_0=\phi'(s_0)$ and, by \eqref{e:04},
\begin{equation}\label{e:llfgg}\frac{1}{\phi((ts_0)/t)}= \frac{1}{\phi(s_0)}\simeq \frac{1}{\phi'(s_0)s_0}\simeq \Phi\left( \frac{z}{ts_0}\right).\end{equation} Thus,
$G_2(r_0)=G_1(r_0)= t(\phi')^{-1}(t/r_0)= t(\phi')^{-1}(t/(t/\phi'(s_0))) = ts_0.$ This together with  \eqref{e:llff} and \eqref{e:llfgg} yields that
$$ I_2 \le \frac{c_{16}}{V(x, \Phi^{-1}(1/\phi(t^{-1})))} \exp(-c_{17}n(t,z)),$$ where
$n=n(t,z)$ satisfies
$$\frac{1}{\phi(n/t)} \simeq \Phi\left(\frac{z}{n}\right).$$

\medskip
Combining all the estimates above,
we get the desired upper bounded estimates for $p(t,x,y).$

\medskip

\noindent
{\bf Proof of the lower bound of $p(t,x,y)$.}\,\,
{\bf(1)} Suppose that $\Phi(z)\phi(t^{-1})\le 1$.
In this case, the proof is almost the same as the jump case except that one uses Lemma \ref{L:gdd}
in place of Lemma \ref{L:gd}.
Nevertheless for reader's convenience, we present a proof here.
By \eqref{e:02m1} and \eqref{e:04m11}, we can find a constant $c_1>1$ such that when $c_1 \Phi(z)\le r,$
\begin{align}\label{e:barut1}
 \frac{\partial {\bar q}(r,z) }{\partial r} \le -\frac{c_2}{r} {\bar q}(r,z);
 \end{align}
  and when  $ 0< r\le c_1 \Phi(z)$,
\begin{align}\label{e:barut2}  \frac{\partial {\bar q}(r,z) }{\partial r} \le \frac{c_3}{r}{\bar q}^*(r,z). \end{align}
Using  the fact that
 $$\int_{r_0}^\infty  {\bar q}(r,z) \,d_r \bP (S_r\le  t)   \le 0,\quad r_0>0,$$ following the arguments of \eqref{e:07} and \eqref{e:07-1}, and applying \eqref{e:barut1} and \eqref{e:barut2}, we find that
\begin{align*}
p(t, x, y ) &\ge c_4\phi(t^{-1}) \int_{c_1 \Phi(z)}^{2c_1/\phi(t^{-1})} \frac{1}{V(\Phi^{-1}(r))}\,dr\\
&\quad -{c_5 \phi(t^{-1})}\int^{c_1 \Phi(z)}_0    \frac{1}{V(\Phi^{-1}(r))}\exp(-c_6m(r,z))\,dr \\
&=: I_{1,1}-I_{1,2}.
\end{align*}
According to \eqref{e:PF},
\begin{align*}I_{1,2}
&\le  \frac{c_7\phi(t^{-1})\Phi(z) }{V(z)},\end{align*} and so
\begin{align*} p(t,x,y)
\ge &c_4\int_{c_1 \Phi(z) \phi(t^{-1})}^{2c_1} \frac 1{V(\Phi^{-1}( s/\phi(t^{-1})))}\, ds - \frac{c_7\phi(t^{-1})\Phi(z) }{V(z)}.
 \end{align*}
Therefore, combining this  estimates and Lemma \ref{L:lower1m}, we obtain
\begin{align*} p(t,x,y)
\ge&c_8\int_{\Phi(z) \phi(t^{-1})}^{2} \frac 1{V(\Phi^{-1}( s/\phi(t^{-1})))}\, ds.
 \end{align*}
 See the end of part {\bf(1)} in the proof of the lower bound estimates of  $p(t,x,y)$ in Subsection \ref{s:4.2}.

{\bf(2)} Suppose that $\Phi(z)\phi(t^{-1})\ge 1$. By \eqref{e:04m21} and \eqref{e:02m1}, we can find a constant $0<c_1<1$ such that when $ r \le c_1 \Phi(z),$
\begin{equation}\label{rkl}  \frac{\partial {\bar q}(r,z) }{\partial r} \ge  \frac{c_2}{r}{\bar q}(r,z),
\end{equation}
 while for $ r\ge c_1 \Phi(z)$,
\begin{equation}\label{rkl0}
\frac{\partial {\bar q}(r,z) }{\partial r}
\ge  - \frac{c_3}{r}{\bar q}^*(r,z)\end{equation}
Using \eqref{e:04}, \eqref{rkl} and \eqref{rkl0}, following the arguments of
\eqref{e:07} and \eqref{e:07-2}, and noting that $\int^{c_1 \Phi(z)}_{a}=\int^\infty_{a}-\int^\infty_{c_1 \Phi(z)}$,
we find that for any $a \in (0, c_1\Phi(z)]$,
\begin{align*}p(t,x, y)\ge & c_4\int^{c_1 \Phi(z)}_{a}  \frac{1}{V(\Phi^{-1}(r))} \cdot \frac{1}{r} \cdot \exp(-c_5t (\phi')^{-1}(t/r)) \cdot \exp(-c_5m(r,z)) \,dr\\
&-c_6\int_{c_1 \Phi(z)}^\infty  \frac{1}{V(\Phi^{-1}(r))}\cdot \frac{1}{r} \cdot \exp(-c_7t (\phi')^{-1}(t/r)) \cdot \exp(-c_7m(r,z))\,dr
\\
\ge & c_4\int^{\infty}_{a}  \frac{1}{V(\Phi^{-1}(r))} \cdot \frac{1}{r} \cdot \exp(-c_5t (\phi')^{-1}(t/r)) \cdot \exp(-c_5m(r,z))\,dr\\
&-(c_4+c_6)\int_{c_1 \Phi(z)}^\infty  \frac{1}{V(\Phi^{-1}(r))}\cdot \frac{1}{r} \cdot \exp(-c_7t (\phi')^{-1}(t/r)) \,dr
\\=:&c_4I_{2,1}(a)-(c_4+c_6)I_{2,2}.
\end{align*}

By \eqref{vd1}, \eqref{e:Phii} and \eqref{e:031},
\begin{align*}I_{2,2}&
=\sum_{n=0}^\infty\int_{c_1 2^n\Phi(z)}^{c_1 2^{n+1}\Phi(z)}   \frac{1}{V(\Phi^{-1}(r))}\cdot \frac{1}{r} \cdot \exp(-c_7t(\phi')^{-1}(t/r)) \,dr\\
&\le \sum_{n=0}^\infty \frac{1}{V(\Phi^{-1}(c_1 2^n\Phi(z)))}\exp(-c_7t(\phi')^{-1}(t/(c_1 2^n\Phi(z))))\\
&\le \frac{c_8}{V(z)}\sum_{n=0}^\infty 2^{-nd_1/\alpha_2} \exp(-c_9 2^{n(1-\beta_2)}
t(\phi')^{-1}(t/\Phi(z))  )\\
&\le  \frac{c_{10}}{V(z)}\exp(-c_{11} t(\phi')^{-1}(t/\Phi(z))). \end{align*}

Similar argument as above yields that
\begin{equation}
\label{e:newlow1}\begin{split} &\int^{\infty}_{a}  \frac{1}{V(\Phi^{-1}(r))} \cdot \frac{1}{r} \cdot \exp(-2c_5t (\phi')^{-1}(t/r)) \,dr\\
&=\sum_{n=0}^\infty\int_{a2^n}^{a 2^{n+1}}   \frac{1}{V(\Phi^{-1}(r))}\cdot \frac{1}{r} \cdot \exp(-2c_5t(\phi')^{-1}(t/r)) \,dr\\
&\ge \frac{1}{2} \sum_{n=0}^\infty \frac{1}{V(\Phi^{-1}(a 2^{n+1}))}\exp(-2c_5t(\phi')^{-1}(t/(a 2^{n+1})))\\
&\ge \frac{c_{12}}{V(\Phi^{-1}(a))}\sum_{n=0}^\infty 2^{-nd_2/\alpha_1}
\exp(-c_{13}2^{n(1-\beta_1)} t(\phi')^{-1}(t/a))
\\
&\ge  \frac{c_{14} }{V(\Phi^{-1}(a))}\exp(-c_{15}  t(\phi')^{-1}(t/a)),
\end{split}\end{equation}
where all the constants $c_k$'s are independent of $a$.

Without loss of generality we assume $c_{11}<2c_5<c_{15}$, and let
\begin{equation}\label{o:funs2}
G_1^*(r)=c_{11} t(\phi')^{-1}(t/r) ~~\mbox{ and }~~G_2^*(r)=2c_{15} m(r,z)\end{equation}
for all $r>0$ and fixed $z,t >0$. We let $r_0=r_0(t,z)>0$ be the unique constant such that $G_1^*(r_0)=G_2^*(r_0)$. Note that  $G_1^*(r)\ge G_2^*(r)$ for all $r\ge r_0$. In particular,
$$t (\phi')^{-1}(t/r)\ge m(r,z),\quad r\ge r_0,$$ and so
\begin{equation}\label{e:newlow2}
\exp(-c_5t (\phi')^{-1}(t/r)) \cdot \exp(-c_5m(r,z)) \ge \exp(-2c_5t (\phi')^{-1}(t/r)),\quad r \ge r_0.
\end{equation}
By \eqref{e:04}
and \eqref{e:03},
we can choose
$C_0>1$ large such that
\begin{equation}\label{e:ssddee1}\begin{split}
 G_1^*(c_1C_0/\phi(t^{-1})) =& c_{11} t(\phi')^{-1}(t\phi(t^{-1})/(c_1C_0))\\
 \ge& c_{11} t(\phi')^{-1}(c_{17}\phi'(t^{-1})/(c_1C_0))\\
 >&(4 c_{16}c_{15})  \vee (2\log(2c_{10}(c_4+c_6)/(c_4 c_{14}))),
\end{split}\end{equation}
where in the last inequality $c_{16}>0$ satisfies that
$m(c_1 \Phi(z),z)\le c_{16}$
(due to \eqref{e:scdf-1} and \eqref{e:mphisim1}).
Then since $G_1^*$ is non-decreasing,  if  $\Phi(z)\phi(t^{-1})\ge C_0$
$$
G_1^*(c_1\Phi(z))\ge G_1^*(c_1C_0/\phi(t^{-1}))\ge 4 c_{16}c_{15}
\ge   4c_{15} m(c_1 \Phi(z), z)= 2 G_2^*(c_1 \Phi(z)),$$
which, in particular, implies that  $c_1\Phi(z)\ge r_0$.
Thus, we can take $a=r_0$ in \eqref{e:newlow1} and find that for $\Phi(z)\phi(t^{-1})\ge C_0$,
\begin{align*}I_{2,1}(r_0)
&\ge
 \frac{c_{14} }{V(\Phi^{-1}(r_0))}\exp(-2^{-1} G_2^*(r_0))=  \frac{c_{14} }{V(\Phi^{-1}(r_0))}\exp(-2^{-1} G_1^*(r_0)).\end{align*}
Therefore, combining
all the inequalities above, we obtain that for  $\Phi(z)\phi(t^{-1})\ge C_0$,
 \begin{align*}
 p(t,x,y) \ge \frac{c_4 c_{14} }{V(\Phi^{-1}(r_0))}\exp(-2^{-1} G_1^*(r_0)) -
  \frac{c_{10}(c_4+c_6)}{V(z)}\exp(- G_1^*(\Phi(z))).
 \end{align*}
 By
 the fact that $G_1^*$ is non-decreasing and \eqref{e:ssddee1},
 for  $\Phi(z)\phi(t^{-1})\ge C_0$,  \begin{align*}
\exp( 2^{-1} G_1^*(c_1\Phi(z))  ) \ge \exp( 2^{-1}G_1^*(c_1C_0/\phi(t^{-1})))\ge
2c_{10}(c_4+c_6)/(c_4 c_{14}),
 \end{align*}
so that,  using
again the fact that $G_1^*$ is a non-decreasing function, we have
 \begin{align*}
\frac{c_4 c_{14} }{V(\Phi^{-1}(r_0))}\exp(-2^{-1} G_1^*(r_0))  &\ge
\frac{c_4 c_{14} }{V(z)}\exp(-2^{-1} G_1^*(c_1\Phi(z))) \\
 &
=\frac{c_4 c_{14} }{V(z)}\exp(2^{-1} G_1^*(c_1\Phi(z))) \exp(- G_1^*(\Phi(z))) \\
&\ge
\frac{2c_{10}(c_4+c_6)}{V(z)}\exp(- G_1^*(\Phi(z))) .
 \end{align*}
  Thus  for  $\Phi(z)\phi(t^{-1})\ge C_0$
\begin{equation}\label{eehh} p(t,x, y)\ge \frac{2^{-1} c_4 c_{14} }{V(\Phi^{-1}(r_0))}\exp(-2^{-1} G_1^*(r_0)).\end{equation}
 By \eqref{e:04},
$$G_1^*\left({1}/{\phi(t^{-1})}\right)=c_{11}t(\phi')^{-1}(t\phi(t^{-1}))\simeq 1.$$
Using this,
\eqref{vd1}, \eqref{e:03} and \eqref{e:ssddee}, we have
\begin{align*}
& \frac{1}{V(\Phi^{-1}(r_0))}\exp(-2^{-1} G_1^*(r_0)) \\
&\ge \frac{1}{V(\Phi^{-1}(1/\phi(t^{-1})))}\exp(-G_1^*(r_0)) \left[\frac{V(\Phi^{-1}(1/\phi(t^{-1})))}{V(\Phi^{-1}(r_0))}\exp\left(\frac{c_{18}G_1^*(r_0)}{G_1^*({1}/{\phi(t^{-1})})} \right) \right]\\
&\ge \frac{c_{19}}{V(\Phi^{-1}(1/\phi(t^{-1})))}\exp(-G_1^*(r_0))
\left[(r_0\phi(t^{-1}))^{-d_2}\exp(c_{20}(r_0\phi(t^{-1}))^{1/(1-\beta_1)})\right]\\
& \ge\frac{c_{21}}{V(\Phi^{-1}((1/\phi(t^{-1})))}\exp(- G_1^*(r_0)),
\end{align*}
where in the last inequality we used the fact that
$\inf_{r>0} r^{-d} \exp(c_{20} r^{1/(1-\beta)})>0$.

Combining the inequality above with \eqref{eehh}, we obtain that  for  $\Phi(z)\phi(t^{-1})\ge C_0$
$$p(t,x,y)\ge
 \frac{c_{22}}{V(\Phi^{-1}(1/\phi(t^{-1}))))}\exp(-G_1^*(r_0)).
 $$ Furthermore,
by the same argument as
that for
the expression of $G_1(r_0)$
at the end of part {\bf(2-b)} in the proof of upper bound for $p(t,x,y)$, we
  arrive at that for  $\Phi(z)\phi(t^{-1})\ge C_0$
  $$ p(t,x,y)\ge \frac{c_{22}}{V(\Phi^{-1}(1/\phi(t^{-1}))))} \exp(-c_{23}n(t,z)),$$ where
$$\frac{1}{\phi(n/t)} \simeq \Phi\left(\frac{z}{n}\right).$$
Combining this with Lemma \ref{L:lower1m}, we finish the proof.
\end{proof}

 \  \

At the end of the section, we give a corollary of Theorems \ref{theorem:mainjump} and \ref{theorem:maindiff}, which is concerned with explicit forms for the estimate \eqref{e:llffoo}.

\begin{corollary}\label{T:mix2}
Suppose that the fundamental solution $p(t,x,y)$ is  given by \eqref{e:1.5},
where the heat kernel $q(t, x, y)$ of $X$ (or equivalently, of $\sL$)  satisfies condition either \eqref{e:hkmequi} or \eqref{eq:fibie3}.
Then the following  hold.
\begin{itemize}
\item[(i)] If  $d_2<\alpha_1$ and $\Phi(d(x,y))\phi(t^{-1}) \le 1$, then
\begin{align*} p(t,x,y)
\simeq
\frac{1}{V(x,\Phi^{-1}(1/\phi(t^{-1})))}.
\end{align*}

\item[(ii)] If $d_1>\alpha_2$ and $\Phi(d(x,y))\phi(t^{-1})
  \le 1$,
then
\begin{align*} p(t,x,y)
\simeq
  \frac{\Phi(d(x,y))  \phi(t^{-1}) } { V(x,d(x,y))}.
 \end{align*}

 \item[(iii)] If $d_1=d_2=\alpha_1=\alpha_2$ and $\Phi(d(x,y))\phi(t^{-1})
  \le 1$,
then
  $$p(t,x,y)
  \simeq
\frac{1}{V(x, \Phi^{-1}(1/\phi(t^{-1})))}\log\left( \frac{2}{\Phi(d(x,y)) \phi(t^{-1})}\right).$$ \end{itemize}
 \end{corollary}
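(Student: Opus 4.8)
The plan is to reduce all three assertions to estimating a single integral. Under hypothesis \eqref{e:hkmequi} we apply Theorem \ref{theorem:mainjump}(i), and under hypothesis \eqref{eq:fibie3} we apply Theorem \ref{theorem:maindiff}(i); in either case, since $\Phi(d(x,y))\phi(t^{-1})\le 1$, these give
$$
p(t,x,y)\simeq J:=\int_{a}^{2}\frac{dr}{V(x,\Phi^{-1}(r/\phi(t^{-1})))},
\qquad a:=\Phi(d(x,y))\phi(t^{-1})\in(0,1].
$$
Writing $\lambda:=\phi(t^{-1})$ and pulling out the value of the integrand at $r=1$,
$$
J=\frac{1}{V(x,\Phi^{-1}(1/\lambda))}\int_{a}^{2}\frac{V(x,\Phi^{-1}(1/\lambda))}{V(x,\Phi^{-1}(r/\lambda))}\,dr,
$$
so everything reduces to the size of this last integral in the three regimes.

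The only ingredient needed is the scaling of the map $s\mapsto V(x,\Phi^{-1}(s))$: composing the weak scaling \eqref{e:Phii} of $\Phi^{-1}$ (with $(1/\alpha_2,1/\alpha_1)$) with the weak scaling \eqref{vd1} of $V(x,\cdot)$ (with $(d_1,d_2)$) yields constants $c_1,c_2>0$, independent of $x$, such that
$$
c_1\,\kappa^{d_1/\alpha_2}\le \frac{V(x,\Phi^{-1}(\kappa s))}{V(x,\Phi^{-1}(s))}\le c_2\,\kappa^{d_2/\alpha_1},\qquad \kappa\ge1,\ s>0.
$$
Consequently, for $r\in(0,1]$ (take $\kappa=1/r$) the integrand factor $V(x,\Phi^{-1}(1/\lambda))/V(x,\Phi^{-1}(r/\lambda))$ lies between $c_1 r^{-d_1/\alpha_2}$ and $c_2 r^{-d_2/\alpha_1}$, while for $r\in[1,2]$ it is comparable to a constant.

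For (i), the hypothesis $d_2<\alpha_1$ makes the upper exponent $d_2/\alpha_1<1$, so $\int_a^1 r^{-d_2/\alpha_1}\,dr\le\int_0^1 r^{-d_2/\alpha_1}\,dr<\infty$ uniformly in $a\in(0,1]$; together with $\int_1^2(\cdots)\,dr\simeq1$ this gives $J\simeq 1/V(x,\Phi^{-1}(1/\lambda))$, the matching lower bound already coming from the piece $\int_1^2$. For (iii), $d_1=d_2=\alpha_1=\alpha_2$ forces $d_1/\alpha_2=d_2/\alpha_1=1$, so the integrand factor is $\simeq1/r$ for every $r>0$ and hence
$$
J\simeq\frac{1}{V(x,\Phi^{-1}(1/\lambda))}\int_a^2\frac{dr}{r}=\frac{1}{V(x,\Phi^{-1}(1/\lambda))}\log\frac{2}{a},
$$
which is the claimed expression. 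For (ii), $d_1>\alpha_2$ gives $d_1/\alpha_2>1$: the lower bound of $J$ comes from $\int_a^{2a}$, on which $V(x,\Phi^{-1}(r/\lambda))\simeq V(x,\Phi^{-1}(a/\lambda))$, yielding $J\ge c\,a/V(x,\Phi^{-1}(a/\lambda))$; the upper bound comes from the dyadic decomposition $[a,2]\subset\bigcup_{k\ge0}[2^ka,2^{k+1}a]$, on the $k$-th block of which $V(x,\Phi^{-1}(r/\lambda))\ge c_1\,2^{kd_1/\alpha_2}V(x,\Phi^{-1}(a/\lambda))$, so that this block contributes at most $c\,a\,2^{k(1-d_1/\alpha_2)}/V(x,\Phi^{-1}(a/\lambda))$ and $\sum_{k\ge0}2^{k(1-d_1/\alpha_2)}<\infty$. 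Finally, $a=\Phi(d(x,y))\lambda$ means $d(x,y)=\Phi^{-1}(a/\lambda)$, so $V(x,\Phi^{-1}(a/\lambda))=V(x,d(x,y))$ and $J\simeq a/V(x,d(x,y))=\Phi(d(x,y))\phi(t^{-1})/V(x,d(x,y))$.

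This argument is essentially a calculus computation, so we do not expect a genuine obstacle; the one point requiring care is the bookkeeping of the two one-sided scaling exponents $d_1/\alpha_2$ and $d_2/\alpha_1$, and verifying in each case that the relevant exponent lies on the correct side of $1$ (or equals $1$ in the critical case), so that the integral converges, grows like $a^{1-d_1/\alpha_2}$, or produces the logarithm, respectively --- which is exactly where the hypotheses $d_2<\alpha_1$, $d_1>\alpha_2$, and $d_1=d_2=\alpha_1=\alpha_2$ enter.
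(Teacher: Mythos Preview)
Your proof is correct and follows essentially the same approach as the paper: both reduce to the integral $J$ via Theorems \ref{theorem:mainjump}(i) and \ref{theorem:maindiff}(i), then use the composite weak scaling of $s\mapsto V(x,\Phi^{-1}(s))$ with exponents $d_1/\alpha_2$ and $d_2/\alpha_1$ to evaluate it in the three regimes. The only cosmetic differences are that the paper cites the earlier inequalities \eqref{e:rrffee1} and \eqref{e:com1} for the lower bounds (which are exactly your $\int_a^{2a}$ and $\int_1^2$ observations), and in case~(ii) the paper pulls out $V(x,d(x,y))$ and bounds by a single power integral $\int_a^\infty s^{-d_1/\alpha_2}\,ds$ rather than your dyadic sum.
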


\begin{proof} We use the same notation as
in the proof of Theorem \ref{theorem:mainjump}.
We have already observed in \eqref{e:rrffee1} and  \eqref{e:com1} that, when $\Phi(z)\phi(t^{-1}) \le 1$,
\begin{align*}
\int_{ \Phi(z) \phi(t^{-1})}^{2 } \frac 1{V(\Phi^{-1}( s/\phi(t^{-1})))}\,ds=&\phi(t^{-1}) \int_{\Phi(z)}^{2/\phi(t^{-1})} \frac 1{V(\Phi^{-1}(r))}\, dr\\
\ge& \frac{c}{V(\Phi^{-1}(1/\phi(t^{-1})))}
\end{align*}
and $$ \int_{ \Phi(z) \phi(t^{-1})}^{2 } \frac 1{V(\Phi^{-1}( s/\phi(t^{-1})))}\,ds=\phi(t^{-1}) \int_{\Phi(z)}^{2/\phi(t^{-1})} \frac 1{V(\Phi^{-1}(r))}\, dr\ge \frac{c{\Phi(z) } \phi(t^{-1})}{V( z)} .$$

(i) Suppose that $d_2<\alpha_1$. Then for $\Phi(z)\phi(t^{-1}) \le 1$,
\begin{equation}\label{iirr}\begin{split} &\int_{ \Phi(z) \phi(t^{-1})}^{2 } \frac 1{V(\Phi^{-1}( s/\phi(t^{-1})))}\,ds\\
&= \frac{1}{V(\Phi^{-1}(1/\phi(t^{-1})))}\int_{ \Phi(z) \phi(t^{-1})}^{2 }
\frac{V(\Phi^{-1}(1/\phi(t^{-1})))}{V(\Phi^{-1}( s/\phi(t^{-1})))}
\,ds\\
&\le \frac{c}{V(\Phi^{-1}(1/\phi(t^{-1})))}\int_{ \Phi(z) \phi(t^{-1})}^{2 } s^{-d_2/\alpha_1} \,ds\\
&\le \frac{c}{V(\Phi^{-1}(1/\phi(t^{-1})))}\int_{ 0}^{2 } s^{-d_2/\alpha_1} \,ds\le \frac{c}{V(\Phi^{-1}(1/\phi(t^{-1})))},
\end{split}\end{equation} where in the first inequality we used \eqref{vd1} and \eqref{e:Phii}.

(ii) Suppose that $d_1>\alpha_2$. Then using \eqref{vd1} and \eqref{e:Phii} again, we have that for $\Phi(z)\phi(t^{-1})
  \le 1$,
\begin{equation*}\begin{split} &\int_{ \Phi(z) \phi(t^{-1})}^{2}
\frac 1{V(\Phi^{-1}( s/\phi(t^{-1})))}
\, ds\\
&= \frac 1{V(z)}\int_{ \Phi(z) \phi(t^{-1})}^{2 }
\frac{V(\Phi^{-1}(\Phi(z) ))}{V(\Phi^{-1}( s/\phi(t^{-1})))}
\,ds \le \frac c{V(z)}\int_{ \Phi(z) \phi(t^{-1})}^{2 }
\left(\frac{\Phi(z) }{ s/\phi(t^{-1})} \right)^{d_1/\alpha_2}
\,ds\\
& \le \frac c{V(z)}({\Phi(z) }{ \phi(t^{-1})})^{d_1/\alpha_2}\int_{ \Phi(z) \phi(t^{-1})}^{\infty }
s^{-d_1/\alpha_2}
\,ds\le  \frac{ c \Phi(z)  \phi(t^{-1}) }{V(z)}.
\end{split}\end{equation*}

(iii) When $d_1=d_2=\alpha_1=\alpha_2$, by the argument of \eqref{iirr}, we have for $\Phi(z)\phi(t^{-1}) \le 1$,
\begin{equation*}\begin{split} &\int_{ \Phi(z) \phi(t^{-1})}^{2 } \frac 1{V(\Phi^{-1}( s/\phi(t^{-1})))} \,ds\\
&= \frac 1{V(\Phi^{-1}(1/\phi(t^{-1})))}\int_{ \Phi(z) \phi(t^{-1})}^{2 }
\frac{V(\Phi^{-1}(1/\phi(t^{-1})))}{V(\Phi^{-1}( s/\phi(t^{-1})))}
\,ds\\
&\simeq \frac 1{V(\Phi^{-1}(1/\phi(t^{-1})))}\int_{ \Phi(z) \phi(t^{-1})}^{2 } s^{-1} \,ds= \frac{c}{V(\Phi^{-1}(1/\phi(t^{-1})))}\log\left( \frac{2}{\phi(t^{-1})\Phi(z)}\right).
\end{split}\end{equation*}

Therefore, the desired assertion now follows from all the estimates above.  \end{proof}

\begin{proof}[Proof of Theorem $\ref{T:1.2}$] The conclusion for the case that $d(x,y)\phi(t^{-1})\le 1$ immediately follows from Theorem \ref{theorem:mainjump} (i), Theorem \ref{theorem:maindiff} (i) and Corollary \ref{T:mix2}. When $d(x,y)\phi(t^{-1})\ge 1$, the assertion
for pure jump type Dirichlet form  $(\sE, \sF)$ is a direct consequence of Theorem \ref{theorem:mainjump} (ii); for
the case that $(\sE, \sF)$ is  local, according to Theorem \ref{theorem:maindiff} (ii), $n:=n(t,d(x,y))$ is now determined by
$$\frac{1}{\phi(n/t)}\simeq \left(\frac{d(x,y)}{n}\right)^\alpha,\quad t>0,x,y\in M.$$ This is,
$$\bar \phi_\alpha(n/t)=\frac{(n/t)^\alpha}{\phi(n/t)}
\simeq \left(\frac{d(x,y)}{t}\right)^\alpha,\quad t>0,x,y\in M.$$ Then we can prove the desired assertion. \end{proof}

\noindent \textbf{Acknowledgements.}
The research of Panki Kim is supported by
 the National Research Foundation of Korea (NRF) grant funded by the Korea government (MSIP)
(No.\ 2016R1E1A1A01941893).\ The research of Takashi Kumagai is supported
by the Grant-in-Aid for Scientific Research (A)
25247007 and 17H01093,
Japan.\ The research of Jian Wang is supported by National
Natural Science Foundation of China (No.\ 11522106), the JSPS postdoctoral fellowship
(26$\cdot$04021), Fok Ying Tung
Education Foundation (No.\ 151002), National Science Foundation of
Fujian Province (No.\ 2015J01003), the Program for Probability and Statistics: Theory and Application (No.\ IRTL1704), and Fujian Provincial
Key Laboratory of Mathematical Analysis and its Applications
(FJKLMAA).

\vskip 0.2truein

{\footnotesize {\bf Zhen-Qing Chen}

Department of Mathematics, University of Washington, Seattle,
WA 98195, USA

E-mail: zqchen@uw.edu

\bigskip

{\footnotesize {\bf Panki Kim}

Department of Mathematical Sciences,
Seoul National University,
Building 27, 1 Gwanak-ro,

Gwanak-gu,
Seoul 08826, Republic of Korea

E-mail: pkim@snu.ac.kr

\bigskip

{\bf Takashi Kumagai}

Research Institute for Mathematical Sciences,
Kyoto University, Kyoto 606-8502, Japan

E-mail: kumagai@kurims.kyoto-u.ac.jp

\bigskip

{\bf Jian Wang}

College of Mathematics and Informatics \& Fujian Key Laboratory of Mathematical

Analysis and Applications (FJKLMAA), Fujian Normal University, 350007 Fuzhou,
P.R. China.

E-mail: jianwang@fjnu.edu.cn
}


\begin{thebibliography}{99}

\bibitem{BM} Baeumer, B. and Meerchaert, M.M.:
Stochastic solutions for fractional Cauchy problems,
\emph{Fract. Calc. Appl. Anal.} {\bf 4} (2001), 481--500.

\bibitem{BarC} Barlow, M.T. and \v Cern\'y, J.:
Convergence to fractional kinetics for random
walks associated with unbounded conductances,
\emph{Probab. Theory Relat. Fields \bf 149} (2011), 639--673.


\bibitem{BHG}
Brockmann, D., Hufnagel, L. and Geisel, T.:
The scaling laws of human travel,
{\it Nature \bf 439} (2006), 462--465.

\bibitem{Cer}
\v Cern\'y, J.:
On two-dimensional random walk among heavy-tailed conductances,
{\it Elect. J. Probab. \bf 16} (2011), Paper no. 10, 293--313.


\bibitem{Chen} Chen, Z.-Q.:
Time fractional equations and probabilistic representation,
{\it Chaos, Solitons and Fractals \bf 102} (2017), 168--174.
arXiv expanded version: arXiv:1703.01739

\bibitem{CF} Chen, Z.-Q. and Fukushima, M.:
{\it Symmetric Markov Processes, Time Change, and Boundary Theory}.
Princeton Univ. Press, Princeton 2012.


\bibitem{CK} Chen, Z.-Q. and Kumagai, T.:
Heat kernel estimates for jump processes of mixed types on metric measure spaces,
{\it Probab. Theory Relat. Fields \bf 140}  (2008), 277--317.

\bibitem{CKW} Chen, Z.-Q.,   Kumagai, T. and Wang, J.:
 Stability of heat kernel estimates for symmetric jump processes on metric measure spaces,
 arXiv:1604.04035


\bibitem{EK}
Eidelman, S.D. and Kochubei, A.N.:
Cauchy problem for fractional diffusion equations,
{\it J. Differ. Equ. \bf 199} (2004), 211--255.


\bibitem{FN}
Foondun, M. and Nane, E.:
Asymptotic properties of some space-time fractional
stochastic equations, to appear in {\it Math. Z.}, also see arXiv:1505.04615

\bibitem{GK} Grigor'yan, A.  and  Kumagai, T.:
{On the dichotomy in the heat kernel two sided estimates},
in: {\em Analysis on Graphs and its Applications} (P. Exner et al. (eds.)), Proc. of Symposia in Pure Math. {\bf 77},
pp. 199--210, Amer. Math. Soc. 2008.





\bibitem{HK}
Hambly, B.M. and Kumagai, T.:
Transition density estimates for diffusion processes on p.c.f. self-similar fractals,
{\it Proc. London Math. Soc. \bf 78} (1999), 431--458.


\bibitem{JP}
Jain, N.C. and Pruitt, W.E.: Lower tail probability estimates for subordinator and nondeceasing random walks, \emph{Ann. Probab.}
\textbf{15} (1987), 75--101.

\bibitem{KM} Kim, P. and Mimica, A.:
Green function estimates for subordinate
Brownian motions: stable and beyond, \emph{Trans. Amer. Math. Soc.} \textbf{366}
(2014), 4383--4422.

\bibitem{MS0}   Meerschaert, M.M. and Scheffler, H.P.:
 Limit theorems for continuous time random walks with infinite mean waiting times,
 {\it  J. Appl. Probab. \bf 41} (2004), 623-638.


\bibitem{MS1} Meerschaert, M.M. and Scheffler, H.P.:
Stochastic model for ultraslow diffusion,
{\it Stochastic Process  Appl. \bf  116} (2006),  1215--1235.


 \bibitem{MSi}  Meerschaert, M.M. and Sikorskii, A.:
{\em Stochastic Models for Fractional Calculus\/},
 De Gruyter Studies in Mathematics {\bf 43},
Walter de Gruyter, Berlin/Boston, 2012.


\bibitem{Ante}
Mimica, A.: Heat kernel estimates for subordinate Brownian motions, \emph{Proc. London Math. Soc.}
\textbf{113} (2016), 627--648.


\bibitem{Nak}
Nakagawa, J.: Personal Communications.

\bibitem{NSY}
Nakagawa, J., Sakamoto, K. and Yamamoto, M.:
Overview to mathematical analysis for fractional diffusion
equations -- new mathematical aspects motivated by industrial collaboration, {\it J. Math-for-Ind.
\bf 2A} (2010), 99--108.


\bibitem{SZ}
Saichev, A.I. and Zaslavsky, G.M.:
Fractional kinetic equations: Solutions and applications,
{\it Chaos \bf 7} (1997), 753--764.

 \bibitem{SSV}   Schilling, R.L., Song, R. and Vondra\v{c}ek, Z.:
{\em Bernstein Functions. Theory and Applications\/} (2nd Edn),
 De Gruyter Studies in Mathematics {\bf 37},
Walter de Gruyter, Berlin, 2012.


\bibitem{SKW82} Shlesinger, M., Klafter, J. and Wong,  Y.M.:
Random walks with infinite spatial and temporal moments,
{\it J. Statist. Phys.  \bf{27}} (1982), 499--512.


\bibitem{telcs}
Telcs, A.:
{\em The Art of Random Walks,}
Lect. Notes Math. {\bf 1885}, Springer, 2006.



\bibitem{Zas}
Zaslavsky, G.M.:
Fractional kinetic equation for Hamiltonian chaos,
{\it Phys. D \bf 76} (1994), 110--122.

\end{thebibliography}
\end{document}